\documentclass[10pt]{amsart}
\usepackage{amsmath}
\usepackage{amssymb,amscd}
\usepackage{graphicx}
\usepackage{mathdots}
\usepackage{mathrsfs}       % for the script X used by \XXS
\usepackage{graphicx}
\usepackage{bbm} % for mathbbm
\usepackage{microtype} % to adjust spacing between characters to reduce the number of bad line breaks.

\usepackage{colonequals} %\colonequal
\usepackage{mathtools}% \vcentcolon=

\usepackage{tikz,tkz-euclide}
\usepackage{tikz-cd}
\usetikzlibrary{arrows}
\usepackage{tikz-3dplot}

\usepackage{color}\definecolor{darkblue}{rgb}{0,0.1,.5}
\usepackage[colorlinks=true,linkcolor=darkblue, urlcolor=darkblue, citecolor=darkblue]{hyperref}
\usepackage{cleveref}
\usepackage{comment}

\theoremstyle{plain}
\newtheorem{theorem}{Theorem}[section]
\newtheorem{lemma}[theorem]{Lemma}
\newtheorem{proposition}[theorem]{Proposition}
\newtheorem{corollary}[theorem]{Corollary}
\newtheorem{problem}[theorem]{Problem}

\theoremstyle{definition}
\newtheorem{definition}[theorem]{Definition}
\newtheorem{example}[theorem]{Example}

\newtheorem{construction}[theorem]{Construction}

\theoremstyle{remark}
\newtheorem*{remark}{Remark}

\numberwithin{equation}{section}

\def \begineq{\begin{equation}}
\def \endeq{\end{equation}}

\def \bb{\mathbb}

\def \NN{{\bb{N}}}

\def \ZZ{{\bb{Z}}}

\def\Bier{\mathrm{Bier}}

\def\pol{\mathrm{pol}}

\def\zk{\mathcal Z_K}

\def\Q{\mathbb Q}
\def\R{\mathbb R}

\def\N{\mathbb N}

\DeclareMathAlphabet{\mathbbmsl}{U}{bbm}{m}{sl}

\DeclareMathAlphabet{\mathpzc}{OT1}{pzc}{m}{it}

\newcommand{\gen}[3]{{\mathpzc{#1}}_{#2}^{#3}}

\newcommand{\noopsort}[1]{}
\makeatletter
\@namedef{subjclassname@2020}{%
  \textup{2020} Mathematics Subject Classification}
\makeatother

\title[Neighborly Murai spheres and the Simplicial Steinitz Problem]{Neighborly Murai spheres and the Simplicial Steinitz Problem}

\author[Limonchenko]{Ivan Limonchenko}
\address{Mathematical Institute of the Serbian Academy of Sciences
and Arts (SASA), Belgrade, Serbia}
\email{ivan.limoncenko@turing.mi.sanu.ac.rs}

\author[Vavpeti\v{c}]{Ale\v{s} Vavpeti\v{c}}
\address{University of Ljubljana, Faculty of Mathematics and Physics, Slovenia\newline
\indent Institute of Mathematics, Physics, and Mechanics, Ljubljana, Slovenia}
\email{ales.vavpetic@fmf.uni-lj.si}

\subjclass[2020]{05E45, 13F55, 52B05, 52B12, 57S12}
\keywords{Bier sphere, cyclic polytope, multicomplex, neighborly sphere, Stanley-Reisner ring}

\begin{document}

\begin{abstract}
We provide a classification of neighborly Murai spheres, which implies that all of them are polytopal. Furthermore, we show that each neighborly $d$-sphere with no more than $d+4$ vertices is combinatorially equivalent to a Murai sphere for any $d\geq 1$. 
\end{abstract}

\maketitle

\section{Introduction}

For any abstract simplicial complex $K$ on $[m]=\{1,2,\ldots,m\}$ different from the whole simplex $\Delta_{[m]}$ with $m$ vertices, its Bier sphere $\Bier(K)$ was defined in~\cite{Bier} as a deleted join of $K$ and its (combinatorial) Alexander dual $K^\vee$. This beautiful and powerful construction was also shown to provide a $(m-2)$-dimensional PL-sphere with a number of vertices that varies between $m$ and $2m$, see~\cite{Matousek, Longueville}. Since the 1990s, it has been studied intensively and found numerous applications in such areas of research as topological combinatorics~\cite{Matousek}, geometrical combinatorics~\cite{BPSZ05}, polytope theory~\cite{Zivaljevic19,Zivaljevic21}, game theory~\cite{Zivaljevic23}, combinatorial commutative algebra~\cite{HK,CYY,LZ}, toric geometry and toric topology~\cite{LS,LV1,LZ,LTZ1,LTZ2,CYY}.

This paper is a continuation of the research project initiated by~\cite{LV2} and  devoted to studying combinatorial properties of a class of simplicial spheres introduced in~\cite{Mu} under the name 'generalized Bier spheres'; we call them Murai spheres. Given a $c$-multicomplex $M$ with $c\in\N^m, m\geq 1$, different from the set of all monomials in the polynomial algebra $S=\Bbbk[x_1,x_2,\ldots,x_m]$, its generalized Bier sphere $\Bier_c(M)$ was defined in~\cite{Mu} as the boundary of a certain simplicial ball on $[|c|+m]$, $|c|=c_1+c_2+\ldots+c_m$. In the special case $c=(1,1,\ldots,1)$, the complex $\Bier_c(M)$ is isomorphic, or combinatorially equivalent, to $\Bier(K_M)$, where $K_M$ is the simplicial complex corresponding to $M$. Moreover, $\Bier_c(M)$ was proven to be a $(|c|-2)$-dimensional simplicial sphere, whose number of vertices ranges between $|c|$ and $|c|+m$, see~\cite{Mu}.

It was also shown in~\cite{Mu} that Murai spheres share certain important combinatorial properties with classical Bier spheres: all of them are shellable and edge decomposable simplicial spheres~\cite{BPSZ05, Mu}. This similarity was extended further in~\cite{LZ}, where it was proved that the class of flag Bier spheres coincides with the class of flag Murai spheres; all of them are flag nestohedra, and therefore, are polytopal. The classical Simplicial Steinitz problem---whether a given simplicial sphere admits a polytopal realization---is open not only for Bier spheres, but even for Murai spheres. There exist infinitely many non-polytopal Bier spheres~\cite{Matousek}, which immediately implies the existence of infinitely many non-polytopal Murai spheres as well; however, no explicit example of a non-polytopal Murai sphere has been constructed so far. 

In this paper, we continue the studies of the combinatorial structure of Murai spheres with the aim of identifying the widest possible class of polytopal Murai spheres, see~\cite{LV2}, where we classified all the (combinatorial types of) Murai spheres in dimensions one and two. These spheres form a subclass of the class of all neighborly spheres. Recall that a $d$-sphere is called neighborly if each set of its vertices having cardinality no greater than $\lceil\frac{d}{2}\rceil$ forms a face. It turns out that in dimensions $d\geq 3$ neighborly spheres might be non-polytopal, as is the famous Br\"uckner sphere of dimension 3 with 8 vertices, since its graph is complete due to~\cite{GrSr}. The case of higher dimensions is addressed in~\cite[Theorem 4.3]{NoZh}. Furthermore, it is also known, see~\cite{Grun}, that if $n$ is even, then all
neighborly $n$-polytopes are simplicial; if $n$ is odd, then for each $m\geq n+2$ there
are neighborly $n$-polytopes with $m$ vertices that are not simplicial. Here, we are concerned with the class of all neighborly Murai spheres and their combinatorial type classification.

The structure of this paper is as follows. Section 2 is devoted to introducing the key definitions, constructions and results on Murai spheres that are needed in the subsequent parts of the manuscript. In particular, we recall the above mentioned classification of low-dimensional Murai spheres obtained in~\cite{LV2} and prove the necessary and sufficient conditions describing ghost vertices of a Murai sphere.

In Section 3, we deal with an algebraic classification of the most general class of Murai spheres under consideration in this paper; namely, that of the $k$-neighborly $d$-spheres. That is, any $k$ vertices of such a sphere form its face. Unless the sphere is the boundary of a simplex, we must have $1\leq k\leq \lceil\frac{d}{2}\rceil$. For each $k$ under these constraints, we obtain a complete description of the resulting integer vectors $c$ and the corresponding proper $c$-multicomplexes $M$. Two particular cases are considered in detail: $k=1$ is the case where the graph of the sphere is complete and $k=\lceil\frac{d}{2}\rceil$ is the case where the sphere is neighborly. The latter case is the main one in our studies and our algebraic classification of neighborly Murai spheres is given below.

\begin{theorem}
A Murai sphere $\Bier_c(M)$ is neighborly if and only if $M$ or its dual $M^\vee$ is from the following list:
\begin{itemize}
\item[(a)] $c=(c_1,1)$ and $M=\langle x_1^{c_1}\rangle$;
\item[(b)] $c=(k,k,1)$ and $M=\langle x_1^kx_2^k\rangle$;
\item[(c)] $c=(k+1,k,1)$ and $M=\langle x_1^{k+1}x_2^k\rangle$;
\item[(d)] $c=(2,2,2)$ and $M_3^2\subseteq M\subseteq M_3^3$;
\item[(e)] $c=(k+2,k)$, $(k+1,k+1)$,  $(k+1,k,1)$, $(k,k,2)$,  $(k,k,1,1)$ and $M_2^k\subseteq M\subseteq M_2^{k+1};$ 
\item[(f)] $c=(k+1,k)$, $(k,k,1)$ and $M=M_2^k$;
\item[(g)] $c=(c_1,c_2,\ldots,c_m)$, where $c_1\ge k$, $|c|=2k+1$ or $|c|=2k+2$ and $M=\langle x_1^a\rangle$, where $k\le a\le |c|-k-1$.
\end{itemize}
\end{theorem}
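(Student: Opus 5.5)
The plan is to turn neighborliness of $\Bier_c(M)$ into a degree condition on $M$ and $M^\vee$, and then solve that condition by a case analysis on $c$.

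\textbf{Step 1: reduce to $k$-neighborliness.} Since $\Bier_c(M)$ is a $(|c|-2)$-sphere, it is neighborly exactly when it is $k$-neighborly with
\[
k=\left\lceil \tfrac{|c|-2}{2}\right\rceil ,\qquad\text{so that } |c|=2k+1 \text{ or } |c|=2k+2 .
\]
I will therefore specialize the general description of $k$-neighborly Murai spheres from Section 3 to this value of $k$.

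\textbf{Step 2: express missing faces through $M$.} Using Murai's description of $\Bier_c(M)$ as the boundary of the ball on the vertices $x_{i,j}$, I will show that a set of vertices fails to be a face exactly when it contains one of two kinds of sets.
\begin{itemize}
\item[(i)] A set encoding a minimal monomial $u\notin M$, i.e.\ a minimal generator of the complement. Its size is controlled by $\deg u$ and by how many variables $u$ involves.
\item[(ii)] The symmetric sets coming from minimal non-members of the dual $M^\vee$.
\end{itemize}
The ghost-vertex criterion of Section 2 accounts for the vertices $x_{i,c_i}$ that may be missing. This is where the degenerate cases (a) and (g) enter.

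Consequently, $k$-neighborliness becomes a pair of conditions. Every minimal monomial outside $M$ must have degree at least about $k+1$. Dually, every monomial of $M$ of degree at most $k$ forces the corresponding monomial of $M^\vee$, of complementary degree $|c|-\deg$, to behave the same way. In short,
\[
M_m^{k}\cap[\text{monomials dividing } x^c]\subseteq M,
\]
and $M$ has no generators of degree much larger than $k+1$ relative to $|c|$, up to the exchange $M\leftrightarrow M^\vee$. Since $(M^\vee)^\vee=M$ and $\Bier_c(M)\cong\Bier_c(M^\vee)$, I may assume whichever of $M$, $M^\vee$ is more convenient.

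\textbf{Step 3: case analysis on the shape of $c$.}
\begin{itemize}
\item If some $c_i$ is small, i.e.\ $c_i=1$ in several coordinates, then squarefree pieces can be glued freely and the sphere reduces to a lower one. This leads to the principal cases (a), (b), (c), and to (g) with $M=\langle x_1^a\rangle$. In (g), the inequalities $k\le a\le |c|-k-1$ are exactly the degree conditions of Step 2 applied to $M$ and to $M^\vee$.
\item If $M$ is not principal, then $M$ is squeezed between $M_m^k$ and $M_m^{k+1}$. The requirement that the complement of $M$ within $x^c$ has no small generators bounds $m$ and the $c_i$. This yields the finite list in (d), (e), (f), with $c=(2,2,2)$ as the only genuinely three-variable exceptional case.
\end{itemize}

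\textbf{Step 4: sufficiency.} For each item on the list, I will check directly that every minimal non-face has more than $k$ vertices, using the explicit description from Step 2.

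\textbf{Main obstacle.} I expect the hardest part to be the necessity direction in the non-principal case: excluding $m\ge3$ together with larger $c_i$. The plan there is to exhibit an explicit small missing face. I will take a generator $u$ of $M$ of degree $k+1$ and a monomial $v\notin M$ differing from $u$ in two variables. From them I will build a non-face on at most $k$ vertices whenever $|c|\ge 2k+1$ but $c\ne(2,2,2)$ and $c$ lies outside the listed shapes. Careful bookkeeping of $|c|\in\{2k+1,2k+2\}$ separately should make this tractable.
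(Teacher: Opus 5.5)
Your overall strategy (Murai's description of the Stanley--Reisner ideal, then a case analysis on $c$) is the paper's, but there is a genuine gap in Step 2.

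Murai's formula (Theorem~\ref{FaceIdealMuraiThm}) has a third summand, $\mathrm{pol}(x_1^{c_1+1},\dots,x_m^{c_m+1})$. So every block $\tilde X_i=\{x_{i,0},\dots,x_{i,c_i}\}$, which has $c_i+1$ vertices, is a non-face. Your list (i)--(ii) omits these blocks, and without them the reduction of neighborliness to degree conditions on $M$ and $M^\vee$ is false. For example, take $k\ge 2$, $c=(1,\dots,1)\in\N^{2k+2}$, and let $M$ be the set of all squarefree monomials of degree $\le k+1$. Then:
\begin{itemize}
\item there are no ghost vertices;
\item every non-face of type (i) or (ii) has at least $k+1$ vertices;
\item yet $\{x_{i,0},x_{i,1}\}$ is a non-face.
\end{itemize}
So your degree conditions hold while the sphere is not neighborly, and nothing in those conditions bounds $m$ or the $c_i$. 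The same example shows why your plan for the ``main obstacle'' cannot succeed. No small non-face can be built from a generator $u$ and a non-member $v$; the small non-faces are the blocks $\tilde X_i$.

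These blocks are the missing engine. They give $c_i\ge k$ for every index $i$ that carries no ghost vertex (Lemma~\ref{NeighbornessAndCcomponentsLemma}). Combined with $|c|\le 2k+2$, a one-line count removes your obstacle, and this is how the paper argues.
\begin{itemize}
\item If $x_{m,0}$ and $x_{m,1}$ are both ghosts, then $c_m=1$ and $M=\langle (x_m)^c\rangle$. The inequality $(m-1)k+1\le 2k+2$ forces $m\le 3$, which gives (a)--(c).
\item Otherwise, after dualizing, the ghosts are $\{x_{i,0}\mid i\in V\}$. Theorem~\ref{NeighborlySpheresThm}(b) then gives: $c_i\ge k$ for $i\notin V$; $x^a\in M$ for $|a|\le k$ with support off $V$; and $(x^a)^c\notin M$ for all $|a|\le k$.
\item Three or more indices off $V$ force $3k\le 2k+2$, hence $k=2$ and $c=(2,2,2)$, which is (d).
\item Exactly two indices off $V$ leave ghost coordinates summing to at most $2$. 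This yields precisely the vectors in (e) and (f), with $M_2^k\subseteq M\subseteq M_2^{|c|-k-1}$.
\item Exactly one index off $V$ gives (g).
\end{itemize}

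Ghosts are not confined to (a) and (g), as you suggest. For $c=(k+1,k,1)$, $(k,k,2)$, $(k,k,1,1)$ and $(k,k,1)$ one has $x_3\notin M$, so your inclusion $M_m^k\cap\{\text{divisors of } x^c\}\subseteq M$ fails there. This has three consequences for your plan:
\begin{itemize}
\item The right case parameter is $|[m]\setminus V|$, not the size of the $c_i$; in (g) the $c_i$ with $i\ge 2$ may be large.
\item In Step 4 you may count only non-faces made of geometric vertices, since ghost vertices are minimal non-faces of size one.
\item You must also handle $V=[m]$, i.e.\ $M=\langle 1\rangle$. It gives $\partial\Delta^{|c|-1}$, which is neighborly but not on the list as stated; the paper's case split skips this case too.
\end{itemize}
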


Here, we use the notation $M_r^p:=\langle x_1^{a_1}\ldots x_r^{a_r}\mid a_1+\cdots+a_r=p\rangle$ which means the $c$-multicomplex generated by the set of all monomials $x^a$ such that $|a|=p$. The rest of the paper is devoted to describing the combinatorial types of the Murai spheres listed in the above theorem and proving that all of them are polytopal. 

It turns out that we can achieve this goal by means of the simplicial multiwedge construction and using the notion of a cyclic polytope. These are recalled in Section 4, where we deal with the class of polytopal $(n-1)$-dimensional spheres with $m$ vertices, which can be obtained from the boundary $\Delta(2p+3,2p)$ of a cyclic polytope of dimension $2p$ with $(2p+3)$ vertices by applying the simplicial multiwedge construction with the integer vector $J\in\N^{2p+3}$. We denote such a sphere by $\Delta(p,J)$. In particular, for each such sphere one has $m-n=3$. In fact, each simplicial $(n-1)$-sphere with $(n+3)$ vertices is polytopal, see~\cite{Mani}, and moreover, is isomorphic to either a join of three simplices or $\Delta(p,J)$ for some positive integer $p$ and integer vector $J\in\N^{2p+3}$, see~\cite{Er}. For the sake of completeness, we provide an elementary proof of the fact that $\Delta(p,J)$ is a neighborly sphere if and only if $J$ is a $\{1,2\}$-vector satisfying the Evenness condition: if the components of $J$ are viewed in cyclic order, then between any two consecutive '2's there is an even number of '1's. In particular, a vector $J=(1,\ldots,1)$ satisfies the Evenness condition. This fact can also be proved using Gale diagrams; it yields the classification of all neighborly $(d-1)$-spheres with $d+3$ vertices in the following form.

\begin{theorem}
Let $d\geq 2$. Then $K$ is a neighborly $(d-1)$-sphere with $d+3$ vertices if and only if one of the following two conditions holds:
\begin{itemize}
\item $K\cong\partial\Delta^1\ast\partial\Delta^1\ast\partial\Delta^1$ (the boundary of octahedron);
\item there exist $p\in\N$ and $J\in\N^{2p+3}$ with $|J|=d+3$ satisfying the Evenness condition such that 
$K$ is isomorphic to $\Delta(p,J)$.
\end{itemize}
\end{theorem}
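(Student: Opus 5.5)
The plan is to combine the structural classification of spheres with few vertices with a direct neighborliness computation for each resulting type. By Mani~\cite{Mani} and Erd\H{o}s-type classification~\cite{Er} recalled above, every simplicial $(d-1)$-sphere with $d+3$ vertices is isomorphic either to a join of three simplex boundaries $\partial\Delta^{a}\ast\partial\Delta^{b}\ast\partial\Delta^{c}$ with $a+b+c=d$, or to $\Delta(p,J)$ for some $p\in\N$ and $J\in\N^{2p+3}$ with $|J|=d+3$. The proof therefore splits into these two cases, and in each I will determine exactly when the sphere is neighborly, i.e.\ when every set of at most $\lceil\frac{d-1}{2}\rceil$ vertices is a face.

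\textbf{Join case.} The minimal non-faces of $\partial\Delta^{a}\ast\partial\Delta^{b}\ast\partial\Delta^{c}$ are exactly the three vertex sets of the factors, of sizes $a+1,b+1,c+1$. Neighborliness requires each of these to exceed $\lceil\frac{d-1}{2}\rceil$. Since $(a+1)+(b+1)+(c+1)=d+3$, the smallest of the three is at most $\lfloor\frac{d+3}{3}\rfloor$, so we need $\lfloor\frac{d+3}{3}\rfloor\ge\lceil\frac{d-1}{2}\rceil+1$. This forces $d\le 3$; combined with $d\ge 2$, a short check leaves only $a=b=c=1$ ($d=3$, the octahedron). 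For $d=2$ the options are $a+b+c=2$ with some factor $\partial\Delta^0$, which is not a sphere, so they are excluded or reduce to non-neighborly objects. Conversely, the octahedron is trivially $2$-neighborly in the required sense, since $\lceil 2/2\rceil=1$.

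\textbf{Cyclic case.} Here I will invoke the statement announced just above: $\Delta(p,J)$ is neighborly if and only if $J$ is a $\{1,2\}$-vector satisfying the Evenness condition. The sketch of that fact is as follows. The minimal non-faces of $\Delta(2p+3,2p)$ are, by Gale evenness, the sets of $p+1$ pairwise non-adjacent vertices in the cyclic order, i.e.\ every other vertex, skipping one gap of two. The simplicial multiwedge replaces vertex $i$ by $J_i$ vertices, and a minimal non-face $\sigma$ becomes the union of the blocks over $\sigma$, of size $\sum_{i\in\sigma}J_i$. The dimension becomes $d-1=2p-1+(|J|-2p-3)$, so neighborliness means every such sum is at least $\lceil\frac{d-1}{2}\rceil+1$. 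Averaging over the $2p+3$ rotations of the missing-pair pattern shows the condition is tight: it forces all $J_i\le 2$, and it forces the $2$'s to be placed so that each alternating selection picks up enough of them. This is precisely the parity condition on the gaps between consecutive $2$'s. Combining the two cases gives the theorem.

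The main obstacle is this last combinatorial equivalence, namely translating the inequalities "$\sum_{i\in\sigma}J_i\ge$ bound for all alternating $\sigma$" into Evenness. I would handle it by fixing a $2$ and walking around the cycle, checking that an odd run of $1$'s allows an alternating set that avoids too many $2$'s.
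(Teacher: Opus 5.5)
Your reduction follows the paper's outline and is sound. Mani and Erokhovets split the problem into the three-join case and the $\Delta(p,J)$ case, and your join computation correctly leaves only the octahedron. Your observation about $\Delta(p,J)$ is also right: its minimal non-faces are the blown-up alternating sets, so neighborliness means $\sum_{i\in\sigma}J_i\ge\lceil\frac{d-1}{2}\rceil+1$ for each of them. This is exactly the criterion $|J(t)|+2\ge|J(t^c)|$ of Proposition~\ref{NeighborCriterionProp}, reached more directly than through Betti numbers. The gap is that the actual content of the theorem is never proved: that this system of inequalities is equivalent to $J\in\{1,2\}^{2p+3}$ together with Evenness. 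Invoking \emph{the statement announced just above} is circular, because that announcement is precisely what this theorem establishes. The sketch you offer in its place does not work.

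(i) Averaging cannot give $J_i\le 2$. Every vertex lies in exactly $p+1$ of the $2p+3$ minimal non-faces, so summing all the inequalities only bounds $|J|$. Take $p=1$ and $J=(3,2,1,1,1)$. The blown-up non-faces of the pentagon have sizes $4,4,3,3,2$, and their average $16/5$ exceeds the required bound $3$, yet $J_1=3$. What works is a pairing. For each $i$, the two alternating sets on the path obtained by deleting $i$ from the cycle are disjoint minimal non-faces covering every vertex except $i$. Adding their inequalities $2|\sigma(J)|\ge|J|-2$ gives $J_i\le 2$; this is Lemma~\ref{OneTwoVectorLemma}.

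(ii) For Evenness you give only a heuristic for necessity and nothing at all for sufficiency. The paper needs Lemma~\ref{NumThLemma} here, an induction on $0/1$-sequences after inserting an extra entry into the double gap. A short substitute is available. Put $e_i=J_i-1$, and let $S(x)$ be the number of $1$'s of $e$ in odd positions minus those in even positions when the cycle is read starting from $x$. The odd positions then form the complement of the non-face whose double gap is $\{x-1,x\}$, so the inequalities say exactly that $S(x)\le 1$ for all $x$. One also checks that $S(x+1)=2e_x-S(x)$. This recursion forces $S(u)=1$ whenever $e_u=1$. Propagating across a run of $g$ zeros to the next $1$ then gives $(-1)^g=1$, so every run has even length. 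Conversely, under Evenness consecutive $1$'s in any linear reading sit in positions of opposite parity, so $|S(x)|\le 1$. Some argument of this kind must be supplied for the proof to be complete.
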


In Section 5, we apply the methods and results worked out in the previous section to identify the class of Murai spheres described in Theorem 1.1 (e) and (f). These are precisely the neighborly Murai $(d-1)$-spheres with $(d+3)$ vertices and $d\geq 4$ being odd (case (e)) or even (case (f)), respectively. On the other hand, it is known that when $d$ is even, a neighborly $(d-1)$-sphere with $d+3$ vertices is isomorphic to $\Delta(d+3,d)$, the boundary of the corresponding cyclic polytope. It turns out that the latter family of neighborly spheres coincides with the one determined in Theorem 1.1 (f), which generalizes~\cite[Theorem 5.3]{LV2}: each boundary of a cyclic polytope of the type $\Delta(d+3,d), d\geq 2$ is isomorphic to a Murai sphere. As an application of our technique and Example~\ref{MuraiWithMequalsOneExample}, we obtain the next general result.

\begin{theorem}
Let $d\geq 2$. Then any neighborly simplicial $(d-1)$-sphere with no more than $d+3$ vertices is isomorphic to a Murai sphere.     
\end{theorem}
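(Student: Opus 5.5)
Let $K$ be a neighborly simplicial $(d-1)$-sphere with $m\le d+3$ vertices, $d\ge 2$. I would split the proof according to $m-d\in\{1,2,3\}$. The case $m=d+1$ is trivial: the only simplicial $(d-1)$-sphere on $d+1$ vertices is $\partial\Delta^d$. It is realized, for instance, as $\Bier_c(M)$ with $c=(d,1)$ and $M=\langle x_1^{a}\rangle$ for a suitable $a$ (case (a) of Theorem 1.1 with one ghost vertex, or directly Example~\ref{MuraiWithMequalsOneExample}). I would check this using the description of ghost vertices from Section 2.

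For $m=d+2$ I would use the standard fact that simplicial $(d-1)$-spheres with $d+2$ vertices are joins $\partial\Delta^{i}\ast\partial\Delta^{j}$ with $i+j=d$. Such a join is neighborly (every set of at most $\lceil d/2\rceil$ vertices is a face) exactly when $\min(i,j)\ge\lfloor d/2\rfloor$. These joins are $\partial\Delta^{d/2}\ast\partial\Delta^{d/2}$ for even $d$ and $\partial\Delta^{(d-1)/2}\ast\partial\Delta^{(d+1)/2}$ for odd $d$. Both already appear among the Murai spheres of Theorem 1.1 (g), where $c$ has $|c|=2k+1$ or $2k+2$ and $M=\langle x_1^a\rangle$. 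I would verify that $\Bier_c(\langle x_1^a\rangle)$ with, say, $c=(c_1,c_2)$ is the join $\partial\Delta_{\{\text{copies of }x_1\}}\ast\partial\Delta_{\{\text{copies of }x_2\}}$ with the right sizes, possibly after discarding ghost vertices. This is exactly the content of Example~\ref{MuraiWithMequalsOneExample}, which I would quote.

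For $m=d+3$ I would invoke Theorem 1.2. Then $K$ is either the octahedron boundary (only possible for $d=3$) or $\Delta(p,J)$ with $J$ satisfying the Evenness condition. The octahedron is $\partial\Delta^1\ast\partial\Delta^1\ast\partial\Delta^1$. It is a Bier sphere $\Bier(K)$ with $K$ three disjoint points, equivalently the Murai sphere with $c=(1,1,1)$ and $M=\langle x_1,x_2,x_3\rangle$. For $\Delta(p,J)$ I would use the Section 5 identification. When $d$ is even, Evenness forces all entries of $J$ to be $1$ (a $\{1,2\}$-vector of odd length $2p+3$ with $|J|-(2p+3)$ twos and even gaps, parity argument), so $K=\Delta(d+3,d)$, which matches case (f). When $d$ is odd, $J$ has exactly one $2$ or a configuration allowed by Evenness, and each such $\Delta(p,J)$ must be matched with one of the spheres in case (e), whose five choices of $c$ give the different multiwedge patterns.

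The main obstacle is this last matching for odd $d$. One must show that every Evenness-admissible $J$ with $|J|=d+3$ is realized by some $c$ in (e) together with some $M$ between $M_2^k$ and $M_2^{k+1}$. My plan is to count: Evenness-admissible $J$ up to rotation and reflection versus non-isomorphic Murai spheres from (e). I would then exhibit an explicit isomorphism $\Bier_c(M)\cong\Delta(p,J)$ by computing the minimal non-faces of $\Bier_c(M)$. The minimal non-faces of $\Delta(p,J)$ are the multiwedged alternating odd cyclic intervals, and I would compare them with those of $\Bier_c(M)$ via Gale-type cyclic orderings of the vertices $x_i^{(j)}$.
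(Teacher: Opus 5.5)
Your skeleton is the paper's: split by $f_0(K)\in\{d+1,d+2,d+3\}$, handle the first two values by Example~\ref{MuraiWithMequalsOneExample}, and the last by Theorem~\ref{NeigborlyJ}. For even $d$ your parity argument correctly forces $J=(1,\ldots,1)$, so $K=\Delta(d+3,d)\cong\Bier_{(k+1,k)}(M_2^k)$. Two points do not work as written.

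\textbf{The octahedron.} Your realization is wrong. Since $\dim\Bier_c(M)=|c|-2$, the choice $c=(1,1,1)$ gives a $1$-sphere. Indeed, the Bier sphere of three points on $[3]$ is the hexagon with edges $\{i,j'\}$, $i\neq j$. You need $|c|=4$, e.g.\ $c=(1,1,1,1)$ and $M=\langle x_1x_2x_3\rangle$, as in the paper. By Proposition~\ref{CharacterisationGhostVerticesProp}, both $\gen{x}{4}{0}$ and $\gen{x}{4}{1}$ are then ghosts, and the remaining minimal non-faces are the three pairs $\{\gen{x}{i}{0},\gen{x}{i}{1}\}$, $i=1,2,3$. (Minor: case (a) of Theorem~1.1 has two ghost vertices, not one. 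Also, $c=(d,1)$ with $M=\langle x_1^a\rangle$ gives $\partial\Delta^d$ only for $a\in\{0,d\}$; for $0<a<d$ you get $d+2$ vertices.)

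\textbf{Odd $d$.} You have not shown that every $\Delta(p,J)$ with $J$ satisfying the Evenness condition is a Murai sphere. If ``the Section 5 identification'' means citing Theorem~\ref{MuraiClassesCoincideThm}, that is exactly what the paper does and the case is closed. Your last paragraph, however, sets out to prove the matching, and that plan has a gap. Counting is no shortcut: to count isomorphism classes of the $\Bier_c(M)$ with $M_2^k\subseteq M\subseteq M_2^{k+1}$ you must already compute which $J$ each realizes, and then you are checking surjectivity directly. The missing ingredient is an explicit construction of $M$ from $J$ (Lemma~\ref{lem:IsubMc}).

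Here is the mechanism for $c=(k+2,k)$. Take $M=\langle x_1^{\alpha_1}x_2^{\beta_1},\ldots,x_1^{\alpha_r}x_2^{\beta_r}\rangle$ with $\alpha_i$ increasing, $\beta_i$ decreasing and $\alpha_i+\beta_i\in\{k,k+1\}$. The minimal non-faces of $\Bier_c(M)$ are:
\begin{itemize}
\item the $\pol^*$ of the $(x_1^{\alpha_i}x_2^{\beta_i})^c$;
\item the $\pol$ of the minimal non-elements $x_1^{\alpha_i+1}x_2^{\beta_{i+1}+1}$ between consecutive generators (Lemma~\ref{lemma_minimalnon});
\item the two sets $\{\gen{x}{1}{0},\ldots,\gen{x}{1}{\alpha_r}\}$ and $\{\gen{x}{2}{0},\ldots,\gen{x}{2}{k}\}$.
\end{itemize}
These form a cyclic chain $\sigma_0,\ldots,\sigma_{2r}$ with $\sigma_i\cap\sigma_{i+1}=\emptyset$. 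This pattern identifies $\Bier_c(M)$ with $\Delta(2r+1,J)$, where $j_i=|V|-|\sigma_i|-|\sigma_{i+1}|$; explicitly $j_{2i-1}=\beta_i-\beta_{i+1}$, $j_{2i}=\alpha_{i+1}-\alpha_i$, and one checks $\sum j_i=|V|=2k+4$.

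Conversely, given $J$ you build the staircase $(\alpha_i,\beta_i)$ step by step from consecutive pairs of entries of $J$. The Evenness condition is precisely what keeps $\alpha_i+\beta_i\in\{k,k+1\}$ and $M_2^k\subseteq M$ at every step. Supply this construction (or cite the theorem) and fix the octahedron, and your argument becomes the paper's.
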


Finally, in Section 6, we combine all the previously obtained results and use the classification of neighborly simplicial 5-polytopes with 9 vertices obtained in~\cite{Fin} in order to complete the geometrical classification of neighborly Murai spheres. Since in dimensions 1 and 2 each sphere is neighborly and low-dimensional Murai spheres were already classified in~\cite{LV2}, it remains to consider neighborly Murai spheres in dimensions greater than or equal to 3. This leads us to the following statement, which resolves~\cite[Problem 5.4]{LV2}.

\begin{theorem}
Let $n\geq 4$. An $(n-1)$-dimensional Murai sphere is neighborly if and only if it is isomorphic to one of the following polytopal spheres:
\begin{itemize}
\item $\Delta(n+1,n), n\geq 4$, the boundary of a simplex;
\item $\Delta(n+2,n),n\geq 4$, a join of two boundaries of simplices whose dimensions differ by not more than one;
\item the boundary of a simplicial neighborly 5-polytope with 9 vertices described in Example~\ref{NeighborlyMuraiTwoTwoTwoExample};
\item $\Delta(n+3,n)(J),n\geq 4$, where $n$ is even and $J$ satisfies the \emph{Evenness condition}.
\end{itemize}
In particular, each neighborly Murai sphere is polytopal.
\end{theorem}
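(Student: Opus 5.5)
Since $\dim\Bier_c(M)=|c|-2$, an $(n-1)$-dimensional Murai sphere has $|c|=n+1$. The plan is to run through the list of Theorem 1.1, compute $|c|$ and the number of vertices in each case (using the description of ghost vertices from Section 2), and identify the combinatorial type. The condition $n\ge4$ means $|c|\ge5$, which restricts the parameter $k$ in cases (a)--(g). Note also that $\Bier_c(M)\cong\Bier_c(M^\vee)$, so it suffices to treat $M$ from the list.

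\textbf{Few vertices (cases (a), (b), (c), (g)).} Here $M$ is generated by a single monomial, so by Example~\ref{MuraiWithMequalsOneExample} the sphere is a join of boundaries of simplices, after discarding ghost vertices. Concretely, $\Bier_c(\langle x_1^a\rangle)$ should be $\partial\Delta^{a}\ast\partial\Delta^{|c|-a}$ up to ghost vertices, or the boundary of a simplex when $a$ sits at an extreme value. Cases (b) and (c) are analogous with two variables. Neighborliness then forces the two dimensions to differ by at most one: in (g) this matches $k\le a\le |c|-k-1$ with $|c|\in\{2k+1,2k+2\}$, and in (b) and (c) it matches $c=(k,k,1)$, $(k+1,k,1)$. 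This produces exactly the first two bullets. Conversely, every $\Delta(n+1,n)$ and every balanced join $\Delta(n+2,n)$ arises, for instance via (g) with $m=2$.

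\textbf{Case (d).} Here $c=(2,2,2)$, so $n=5$, and the sphere is a neighborly $4$-sphere with at most $9$ vertices. I will check that no vertex is ghost, so there are exactly $9$ vertices, and compare with Finbow's classification~\cite{Fin} of neighborly simplicial $5$-polytopes with $9$ vertices. Example~\ref{NeighborlyMuraiTwoTwoTwoExample} should show that all $M$ with $M_3^2\subseteq M\subseteq M_3^3$ give the same sphere, or at least only polytopal ones. I expect this to be the main obstacle: it requires a case check over the intermediate multicomplexes, up to symmetry of the three variables and duality, and the identification rests on matching face data (for example missing faces) against Finbow's list.

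\textbf{Cases (e) and (f).} Here $|c|\in\{2k+2,2k+1\}$ and there are no ghost vertices beyond those forced. So these are neighborly $(d-1)$-spheres with $d+3$ vertices, where $d=n$. By Theorem 1.2 each is the octahedron boundary (excluded since $n\ge4$) or $\Delta(p,J)$ with $J$ satisfying the Evenness condition. The vertex count $|c|+m=n+3$ holds when $m=2$; for larger $m$, Section 2 forces ghost vertices and the count drops, sending the case back to the join cases. When $n$ is odd, neighborliness with $n+3$ vertices forces the $\Delta(p,J)$ form, and I expect parity to rule out case (e) unless $J$ is trivial, which reduces to a case already covered; I need to check this with the Section 5 computations. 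Section 5 shows that the (e)/(f) spheres realize exactly the spheres $\Delta(n+3,n)(J)$ with $J$ even.

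Finally, polytopality follows because each listed sphere is the boundary of a simplex, a join of simplex boundaries, a Finbow polytope, or a multiwedge of a cyclic polytope boundary, and multiwedges preserve polytopality.
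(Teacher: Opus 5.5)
Your skeleton matches the paper's: run through the list (a)--(g) of Theorem~\ref{NeighborlyExplicitClassificationThm}, then identify
\begin{itemize}
\item (a) as simplex boundaries,
\item (b), (c), (g) as balanced joins,
\item (d) via the explicit isomorphisms with Finbow's polytopes in Example~\ref{NeighborlyMuraiTwoTwoTwoExample},
\item (e), (f) via Theorem~\ref{NeigborlyJ} and Section~5.
\end{itemize}
However, your paragraph on (e) and (f) contains two false claims, and as written they would derail the argument.

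\textbf{The ghost-vertex count for $m>2$ is wrong.} Ghosts do not push these spheres down to the join cases. For $c=(k+1,k,1)$ or $(k,k,2)$ with $M_2^k\subseteq M\subseteq M_2^{k+1}$, and for $c=(k,k,1)$ with $M=M_2^k$, Proposition~\ref{CharacterisationGhostVerticesProp} gives exactly one ghost, $x_{3,0}$. Indeed $x_3\notin M$, while $(x_3)^c$ either has degree $>k+1$ or involves $x_3$, so it is not in $M$. For $c=(k,k,1,1)$ the ghosts are exactly $x_{3,0}$ and $x_{4,0}$. So every sphere in (e), (f) has exactly $|c|+2=n+3$ geometric vertices. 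This is what your own preceding sentence says, which the ``count drops'' sentence then contradicts. These spheres are in fact the same as for $m=2$: the corollary in Section~5 relabels $x_{3,1}\mapsto x_{1,k+1}$ and $x_{4,1}\mapsto x_{1,k+2}$. For the ``only if'' direction the repair is to count correctly and apply Theorem~\ref{NeigborlyJ} uniformly to all of (e) and (f).

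\textbf{Your parity expectation is backwards.} In case (e), $|c|=2k+2$, so $n=2k+1$ is odd and the sphere is even-dimensional. The corresponding $J$ has even sum $2k+4$ but odd length $2p+3$, so it necessarily contains $2$'s. Case (e) is therefore never ``trivial $J$'', and it is not ruled out. For example, $\Bier_{(k+2,k)}(M_2^k)$ is neighborly by the criterion of Section~3, since $(x^a)^c$ has degree $\ge k+2$ whenever $|a|\le k$. Case (e) is exactly where the nontrivial multiwedges live. Trivial $J$ occurs precisely in case (f), where $n=2k$ and the sphere is $\Delta(2k+3,2k)$. The phrase ``$n$ is even'' in the fourth bullet refers to the dimension $2p$ of the cyclic polytope $C(2p+3,2p)$ being multiwedged, not to the dimension of the Murai sphere. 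For the ``if'' direction you then need exactly what your last sentence invokes:
\begin{itemize}
\item Lemma~\ref{lem:IsubMc}, via Theorem~\ref{MuraiClassesCoincideThm}, which realizes every $J\in\mathcal J_k$ as some $\Bier_{(k+2,k)}(M)$;
\item the explicit isomorphism $\Bier_{(k+1,k)}(M_2^k)\cong\Delta(2k+3,2k)$ from Section~6, for trivial $J$.
\end{itemize}

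\textbf{Smaller slips.}
\begin{itemize}
\item $\Bier_c(\langle x_1^a\rangle)\cong\partial\Delta^a\ast\partial\Delta^{|c|-a-1}$, not $\partial\Delta^a\ast\partial\Delta^{|c|-a}$; it has $|c|+1$ geometric vertices.
\item The simplex boundary comes from case (a), namely $\Bier_{(n,1)}(\langle x_1^n\rangle)$, not from (g). In (g) the bound $k\le a\le|c|-k-1$ makes both join factors nontrivial.
\item Case (d) yields $16$ pairwise distinct spheres, not a single one. This is why the statement refers to them only through the example.
\end{itemize}
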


On the other hand, it follows from~\cite[Lemma 2.10]{LV2} that a Bier sphere of dimension greater than two is neighborly if and only if it is the boundary of a simplex. As an immediate consequence of the last theorem, we get a complete description of all the boundaries of cyclic polytopes $\Delta(a,b)$ that are isomorphic to Murai spheres, see Corollary~\ref{ClassificationCyclicMuraiCoro}. Finally, we discuss some open problems related to our main results.

%%%%%%%%%%%%%%%%%%%%%%%%%%%%%%%%%%%%%%%%%%%%%
\section{Basic definitions, examples and constructions}
%%%%%%%%%%%%%%%%%%%%%%%%%%%%%%%%%%%%%%%%%%%%%

In this section, we are going to define the class of generalized Bier spheres, or Murai spheres, introduced in~\cite{Mu} and discuss some examples and constructions necessary in the subsequent sections. Throughout the section we follow the notation used in~\cite{LV2}. 

Let $m\geq 1$, $S:=\Bbbk[x_1,x_2,\ldots,x_m]$, where $\Bbbk$ is a commutative ring with unit, and $c=(c_1,\ldots,c_m)\in\N^m$. 

\begin{definition}
A monomial $x^a:=x_1^{a_1}\cdots x_{m}^{c_m}\in S$ is called a \emph{c-monomial} if $a_i\leq c_i$ for each $1\leq i\leq m$. A \emph{$c$-multicomplex} $M$ is a subset of $c$-monomials in $\Bbbk[x_1,\ldots,x_m]$ such that 
$$
x^a\mid x^b\text{ and }x^b\in M\Rightarrow x^a\in M.
$$
Its {\emph{Alexander dual with respect to $c$}} is a $c$-multicomplex $M^\vee$ defined by
$$
M^\vee := \{(x^{a})^c\,|\,x^{a}\text{ is a }c\text{-monomial such that }x^{a}\notin M\},
$$
where
$$
(x^{a})^c:=x_1^{c_1-a_1}\cdots x_m^{c_m-a_m}.
$$
A $c$-multicomplex $M$ is called \emph{full} if it contains all possible $c$-monomials in $S$; $M$ is called \emph{proper} otherwise.
\end{definition}

Elements of a multicomplex $M$ that are \emph{maximal} with respect to the divisibility partial order relation on $M$ are called \emph{generators} of $M$; their set is denoted by $\max(M)$. Similarly, we define \emph{minimal non-elements} of $M$; their set is denoted by $\min(M)$. The same notation is used in the case of a simplicial complex, since it can be considered as a particular case of a multicomplex, see the above remark. Finally, we use angle brackets to indicate the fact that a set of $c$-monomials generates a $c$-multicomplex; in particular, we write:
$$
M=\langle m\,|\,m\in\max(M)\rangle.
$$

A crucial particular case of the above construction is the notion of (abstract) simplicial complex. Namely, if $c=(1,1,\ldots,1)$, then  $c$-multicomplexes $M$ are in one-to-one correspondence with simplical complexes $K_M$ on $[m]:=\{1,2,\ldots,m\}$. Likewise, any simplicial complex $K$ is determined by the set $\max(K)$ of its maximal faces (\emph{facets}) with respect to inclusion, as well as by the set of its \emph{minimal non-faces} $\min(K)$ with respect to inclusion. Furthermore, (combinatorial) \emph{Alexander dual} to $K$ is a simplicial complex $K_{M^\vee}$.

Here we are interested in a certain class of triangulated spheres called Bier spheres and its generalizations. Recall that a complex $K$ is a $d$-dimensional \emph{triangulated sphere} if its geometric realization $|K|$ is homeomorphic to the unit sphere in $\R^{d+1}$. In~\cite{Bier}, it was shown that a deleted join of any simplicial complex $K\neq\Delta_{[m]}$ on $[m]$ with its combinatorial Alexander dual complex $K^\vee$ yields a triangulated sphere $\Bier(K)$ called its \emph{Bier sphere}. 

We are now ready to give a definition of the main object of study in this paper, which generalizes the notion of a Bier sphere in the direction of multicomplexes.  

\begin{definition}[\cite{Mu}]
Let $M$ be a proper $c$-multicomplex. Its \emph{Murai sphere} $\Bier_c(M):=\partial B_c(M)$ is defined to be the boundary of its \emph{Murai ball} $B_c(M)$, which is a simplicial complex on $\tilde{X}:=\tilde{X}_1\cup\ldots\cup\tilde{X}_m$, $\tilde{X}_i:=\{\gen{x}{i}{0},\ldots,\gen{x}{i}{c_i}\}$ for $1\leq i\leq m$, and
$$
\max(B_c(M)):=\{F_c(x^a)\mid x^a\in M\}\text{ for }F_c(x^a):=\tilde{X}\setminus\{\gen{x}{1}{a_1},\ldots,\gen{x}{m}{a_m}\}.
$$
\end{definition}

The term introduced above is justified by the next results, see~\cite{Mu}. For any proper $c$-multicomplex $M$:

\begin{itemize}
\item for $c=(1,\ldots,1)$, one gets a Bier sphere: $\Bier_c(M)\cong\Bier(K_M)$;
\item $\Bier_c(M)\cong\Bier_c(M^\vee)$ is a triangulated $(|c|-2)$-sphere;
\item $\Bier_c(M)$ is shellable and edge decomposable.  
\end{itemize}

Using the above definition, one can also describe a Murai sphere in terms of its facets as follows:
$$
\max(\Bier_c(M))=\{F_c(x^a)\setminus\{\gen{x}{i}{j}\}\mid x^a\in M,x^a\diamond x_i^j\not\in M, a_i<j\le c_i\},
$$
where the $\diamond$ operation is defined as follows:
$$
x^a\diamond x_i^j:=x_1^{a_1}\cdots x_{i-1}^{a_{i-1}}x_i^j x_{i+1}^{a_{i+1}}\cdots x_m^{a_m}.
$$

\begin{example}\label{MuraiWithMequalsOneExample}
If $m=1$, then $c\in\N$ and $M=\langle x^a\rangle$ for some $0\leq a\leq c-1$. Then 
$$
\Bier_c(M)=\partial\Delta^{a}\ast\partial\Delta^{c-a-1}.
$$

Indeed, each facet of $\Bier_c(M)$ looks like 
$$
F_c(x^i)\setminus\{\gen{x}{}{j}\}=\{\gen{x}{}{0},\ldots,\gen{x}{}{i-1},\gen{x}{}{i+1},\ldots,\gen{x}{}{j-1},\gen{x}{}{j+1},\ldots,\gen{x}{}{c}\}
$$ 
with $0\leq i\leq a$ and $a+1\leq j\leq c$.
\end{example}

\begin{remark}
The previous example shows that any join of two boundaries of simplices is a Murai sphere. On the other hand, due to the result of~\cite{LV2}, a join of two boundaries of simplices is a Bier sphere if and only if at least one of those simplices has dimension $\leq 1$.    
\end{remark}

Further examples are formed by the next classification of low-dimensional Murai spheres.

\begin{theorem}[\cite{LV2}]\label{LowDimClassificationThm}
The following two statements hold.
\begin{itemize}
\item Each 1-dimensional Murai sphere is isomorphic to a Bier sphere;
\item Each 2-dimensional Murai sphere is isomorphic to either a Bier sphere, or to $\Bier_{(2,1,1)}(\langle x^2,y,z\rangle)$, or to $\Bier_{(2,1,1)}(\langle x^2,xy,z\rangle)$. 
\end{itemize}

The two exceptional Murai polytopes are shown in Figure~\ref{LowDimMuraiFig}.
\end{theorem}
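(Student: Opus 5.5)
The plan is to reduce both statements to a finite enumeration. The Murai sphere $\Bier_c(M)$ has dimension $|c|-2$. A $1$-dimensional Murai sphere therefore has $|c|=3$, and a $2$-dimensional one has $|c|=4$. In both cases $c$ ranges, up to permuting coordinates, over the partitions of $3$ or $4$, and $M$ ranges over the finitely many proper $c$-multicomplexes. We may identify $M$ with $M^\vee$ (since $\Bier_c(M)\cong\Bier_c(M^\vee)$) and with its images under permutations of variables with equal $c_i$. The partition $c=(1,\ldots,1)$ gives Bier spheres by the first listed property of Murai spheres, so only partitions with some $c_i\ge 2$ need work.

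\textbf{Dimension one.} A $1$-dimensional sphere is an $n$-gon, determined up to isomorphism by its number of vertices. This number lies between $|c|=3$ and $|c|+m\le 6$, so it suffices to exhibit Bier spheres that are $n$-gons for $n=3,4,5,6$. I would take Bier spheres of complexes $K$ on $[3]$, for instance $K=\{\emptyset\}$, $K=\langle 1\rangle$, $K=\langle 1,2\rangle$ and $K=\langle 12\rangle$ (or $\langle 1,2,3\rangle$), and count their vertices directly from the deleted-join description. This finishes the first item.

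\textbf{Dimension two, easy partitions.} Here $c\in\{(4),(3,1),(2,2),(2,1,1),(1,1,1,1)\}$.
\begin{itemize}
\item For $c=(4)$, Example~\ref{MuraiWithMequalsOneExample} gives $\partial\Delta^a\ast\partial\Delta^{3-a}$. Only $a=1,2$ give non-degenerate joins, and these are the triangular bipyramid $\partial\Delta^1\ast\partial\Delta^2$, which is a Bier sphere by the Remark.
\item For $c=(3,1)$ and $c=(2,2)$, I would list the proper multicomplexes modulo duality; there are only a handful. For each, I would write down the facets using the formula $F_c(x^a)\setminus\{\gen{x}{i}{j}\}$ and compute the number of vertices (ghost vertices drop out) and the vertex-degree sequence.
\end{itemize}
A $2$-sphere with at most $6$ vertices is determined by its degree sequence. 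Bier spheres of complexes on $[4]$ realize a known small list of such types, so each of these cases should match a Bier sphere.

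\textbf{Dimension two, the case $c=(2,1,1)$.} This is the main obstacle, since it has the most multicomplexes and can produce up to $7$ vertices. I would enumerate the multicomplexes in $\Bbbk[x,y,z]$ with exponents bounded by $(2,1,1)$, up to duality and the swap $y\leftrightarrow z$. For each, I would compute the facet list of $\Bier_{(2,1,1)}(M)$ and its invariants: vertex count, degree sequence, and for $7$ vertices some finer data such as the adjacency of degree-$3$ vertices. I would compare these against the corresponding invariants of all Bier spheres on $[4]$, which can be generated from the complexes on $[4]$ up to symmetry and Alexander duality. The expectation is that every case except $M=\langle x^2,y,z\rangle$ and $M=\langle x^2,xy,z\rangle$ (and their duals) is isomorphic to a Bier sphere. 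For those two exceptions I would show that their invariants differ from those of every Bier sphere with the same vertex count, most likely via the degree sequence. This last comparison is the step I expect to require the most care.
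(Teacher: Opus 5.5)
The paper gives no proof of this statement; it is quoted from \cite{LV2}, so there is no in-text argument to compare against. Judged on its own, your reduction to a finite check is sound and its predictions are correct, but two slips need fixing. In dimension one, $K=\langle 12\rangle$ on $[3]$ gives a $4$-gon, not the hexagon, because vertex $3$ is a ghost of both $K$ and $K^\vee=\langle 12\rangle$. The hexagon is $\Bier(\langle 1,2,3\rangle)$, where $K^\vee=K$ and there are no ghost vertices. For $c=(4)$ you discard $a=0,3$; these give $\partial\Delta^3=\Bier(\{\emptyset\})$, which still has to be listed.

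More substantively, the enumeration of multicomplexes is unnecessary, and so is any invariant finer than the degree sequence.
\begin{itemize}
\item If $c\neq(1,1,1,1)$ then $m\le 3$, so the sphere has at most $7$ vertices.
\item All four $2$-spheres with at most $6$ vertices are Bier spheres on $[4]$: $\Bier(\{\emptyset\})$, $\Bier(\langle 1\rangle)$, $\Bier(\langle 123\rangle)$ (the octahedron) and $\Bier(\langle 1,2\rangle)$ (degrees $3,3,4,4,5,5$).
\item The five $7$-vertex $2$-spheres have pairwise distinct degree sequences $(4,4,4,4,4,5,5)$, $(3,4,4,4,5,5,5)$, $(3,3,3,5,5,5,6)$, $(3,3,4,4,4,6,6)$ and $(3,3,4,4,5,5,6)$.
\item A $7$-vertex Bier sphere on $[4]$ has exactly one ghost vertex, which by duality lies in $K$. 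Up to symmetry, $K$ is then $\langle 1,2,3\rangle$, $\langle 12,3\rangle$, $\langle 12,13\rangle$ or $\langle 12,13,23\rangle$. These realize only the first three sequences.
\item A direct facet computation gives $(3,3,4,4,4,6,6)$ for $\Bier_{(2,1,1)}(\langle x^2,y,z\rangle)$ and $(3,3,4,4,5,5,6)$ for $\Bier_{(2,1,1)}(\langle x^2,xy,z\rangle)$.
\end{itemize}
This proves the statement, and the non-Bier claim in the figure caption, with a handful of sphere computations.

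What your enumeration buys beyond this is the list of multicomplexes that produce the exceptional types, and your expectation there is correct. For $c=(2,1,1)$ without ghost vertices, $M=\{1,x,y,z\}\cup S$ with $S\subseteq\{x^2,xy,xz,yz\}$. These $16$ multicomplexes split into seven classes under duality and $y\leftrightarrow z$. Only the classes of $\langle x^2,y,z\rangle$ and $\langle x^2,xy,z\rangle$ are non-Bier. Any $M$ with a ghost vertex gives at most $6$ vertices and is therefore automatically Bier.
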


\begin{figure}\label{LowDimMuraiFig}
\includegraphics[scale=0.8]{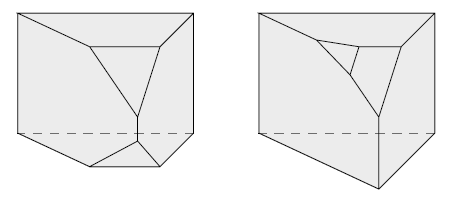}
\caption{Murai polytopes in dimension $3$ that are not isomorphic to Bier polytopes.}
\end{figure}

In the last part of this section, we are going to describe the set $\min(\Bier_c(M))$ of minimal non-faces of a Murai sphere $\Bier_c(M)$. In general, minimal non-faces of $\Bier_c(M)$ correspond to the elements in the minimal set of generators for the Stanley--Reisner ideal of $\Bier_c(M)$. To describe the latter, let us first recall the ideal-theoretic Alexander duality. 

\begin{definition}
Let $I\subset S$ be a $c$-\emph{ideal}, that is, $I$ is generated by $c$-monomials. Its {\emph{Alexander dual with respect to $c$}} is a $c$-ideal $I^\vee\subset S$ defined by
$$
I^\vee = \{(x^{a})^c\,|\,x^{a}\text{ is a }c\text{-monomial in }S\text{ such that }x^{a}\notin I\}.
$$
\end{definition}

Denote by $I_c(M)\subset S$ the $c$-ideal generated by all $c$-monomials not in $M$. Note that for $c=(1,\ldots,1)$, this is the Stanley-Reisner ideal of the simplicial complex $K_M$ corresponding to $M$. Moreover, it is easy to see that
$$
(I_c(M))^\vee = I_c(M^\vee).
$$

Given a monomial ideal $I\subset S$, denote by $G(I)$ the unique minimal set of monomial generators of $I$.
The next observation was proved in~\cite[Lemma 4.3]{LZ}.

\begin{proposition}[\cite{LZ}]\label{MinimalNonMonmomialsProp}
For any $c$-proper multicomplex $M$, one has:
$$
G(I_c(M))=\{(x^a)^c\,|\,x^a\in\max(M^\vee)\}=\min(M)
$$
and 
$$
G(I_c(M)^\vee)=\{(x^a)^c\,|\,x^a\in\max(M)\}=\min(M^\vee).
$$
\end{proposition}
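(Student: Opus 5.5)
The plan is a direct combinatorial argument built on two facts. First, a $c$-multicomplex is an order ideal in the poset of $c$-monomials ordered by divisibility. Second, the complementation map $x^a\mapsto (x^a)^c$ is an order-reversing involution on that poset. I would first prove the left-hand equality $\{(x^a)^c\mid x^a\in\max(M^\vee)\}=\min(M)$ and then identify $G(I_c(M))$ with $\min(M)$. The second displayed statement will then follow by duality.

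\emph{Step 1 (complementation reverses order).} Let $x^a,x^b$ be $c$-monomials. Then $x^a\mid x^b$ means $a_i\le b_i$ for all $i$. This holds if and only if $c_i-b_i\le c_i-a_i$ for all $i$, that is, if and only if $(x^b)^c\mid (x^a)^c$. Moreover $((x^a)^c)^c=x^a$, so complementation is an order-reversing bijection of the finite poset of $c$-monomials onto itself. By the definition of $M^\vee$, this bijection maps the set $N$ of $c$-monomials not in $M$ exactly onto $M^\vee$. An order-reversing bijection sends minimal elements of $N$ to maximal elements of its image. The minimal elements of $N$ are by definition $\min(M)$, so $\min(M)$ is mapped onto $\max(M^\vee)$. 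Applying the involution once more gives $\{(x^a)^c\mid x^a\in\max(M^\vee)\}=\min(M)$.

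\emph{Step 2 (minimal generators).} The ideal $I_c(M)$ is generated by $N$, and $N$ is nonempty because $M$ is proper. The set $N$ is closed upward within the $c$-monomials: if $x^a\in N$, $x^a\mid x^b$ and $x^b$ is a $c$-monomial, then $x^b\notin M$, since otherwise $x^a\in M$ because $M$ is closed under divisors. Every element of $N$ is divisible by some minimal element of $N$, so $I_c(M)$ is generated by $\min(M)$. No element of $\min(M)$ is divisible by another one, since they are pairwise incomparable. Hence $\min(M)$ is the minimal monomial generating set, by the standard uniqueness of minimal monomial generators (a monomial lies in a monomial ideal iff it is divisible by a generator). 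This gives $G(I_c(M))=\min(M)$.

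\emph{Step 3 (dual statement).} Replace $M$ by $M^\vee$, which is again a proper $c$-multicomplex, since $M\neq\emptyset$ as $1\in M$ for a nonempty multicomplex. Step 1 shows that complementation is a bijection, hence $(M^\vee)^\vee=M$. Together with the identity $(I_c(M))^\vee=I_c(M^\vee)$ noted before the proposition, Steps 1 and 2 applied to $M^\vee$ yield
$$G(I_c(M)^\vee)=G(I_c(M^\vee))=\min(M^\vee)=\{(x^a)^c\mid x^a\in\max(M)\}.$$

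There is no serious obstacle. The only point needing care is Step 2: one must make sure that the upward closure holds only among $c$-monomials, which suffices because all the generators in question are $c$-monomials. One must also check that properness of $M$ (and of $M^\vee$) guarantees that the sets involved are nonempty, so that the ideals are proper and nonzero.
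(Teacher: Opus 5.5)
Your proof is correct and complete. The paper gives no argument of its own for this proposition: it imports it from \cite[Lemma 4.3]{LZ}. So there is no in-paper proof to compare with, and yours stands as a self-contained verification. The two ingredients you use are exactly what the statement needs:
\begin{itemize}
\item complementation $x^a\mapsto (x^a)^c$ is an order-reversing involution of the poset of $c$-monomials, and it carries the complement $N$ of $M$ onto $M^\vee$;
\item the minimal monomial generators of an ideal generated by a set $N$ of monomials are the divisibility-minimal elements of $N$.
\end{itemize}

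Three small remarks.

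\emph{The identity $(I_c(M))^\vee=I_c(M^\vee)$.} This is the only input you take on faith, and the paper merely calls it easy. Your upward-closure observation in Step 2 already proves it. A $c$-monomial lies in $I_c(M)$ if and only if it lies outside $M$. Hence $I_c(M)^\vee$ is generated by $\{(x^a)^c\mid x^a\in M\}$, and these are precisely the $c$-monomials not in $M^\vee$.

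\emph{The properness discussion.} It is unnecessary, and the justification ``$1\in M$ for a nonempty multicomplex'' presupposes what it claims. If $N=\emptyset$, every set in the first displayed line is empty. If $M=\emptyset$, every set in the second line is empty. In both cases the equalities hold trivially, and your Steps 1--3 go through verbatim.

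\emph{The meaning of $\min(M)$.} What does matter, and what you use implicitly and correctly, is that $\min(M)$ denotes minimal non-elements among $c$-monomials only. Suppose instead it were taken among all monomials of $S$, and suppose $x_i^{c_i}\in M$. Then $x_i^{c_i+1}$ would be a minimal non-element, but it lies outside $I_c(M)$, so the equality with $G(I_c(M))$ would fail.
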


Let us describe the set $\min(\Bier_c(M))$ of minimal non-faces of a Murai sphere $\Bier_c(M)$ in terms of its Stanley-Reisner ideal.
First, we define two polarizations for a $\bar{c}$-ideal in $S$, where $\bar{c}:=(c_1+1,\ldots,c_m+1)\in\N^m$. They will both belong to the polynomial algebra on the set of variables 
$$
X := X_1\cup X_2\cup\ldots\cup X_m,\text{ where }X_i := \{x_{i,0},\ldots,x_{i,c_i}\}\text{ for }1\leq i\leq m.
$$

\begin{definition}
The \emph{polarization} of a $\bar{c}$-ideal $I\subset\Bbbk[m]$ is the monomial ideal $\pol(I)$ in $\Bbbk[X]$ 
such that
\[
\pol(I):=(\pol(x^a):=\prod\limits_{a_i\neq 0}x_{i,0}\ldots x_{i,a_{i}-1}\,|\,x^a\in G(I))\subset\Bbbk[X].
\]

The $\ast$-\emph{polarization} of a $\bar{c}$-ideal $I\subset\Bbbk[m]$ is the monomial ideal $\pol^\ast(I)$ in $\Bbbk[X]$ such that
\[
\pol^\ast(I):=(\pol^\ast(x^a):=\prod\limits_{a_i\neq 0}x_{i,c_i}\ldots x_{i,c_i-a_{i}+1}\,|\,x^a\in G(I))\subset\Bbbk[X].
\]
\end{definition}

The next statement proved in~\cite[Theorem 3.6]{Mu} describes the face ideal of $\Bier_c(M)$ and therefore the set $\min(\Bier_c(M))$ of its minimal non-faces.

\begin{theorem}[\cite{Mu}]\label{FaceIdealMuraiThm}
Let $M$ be a proper $c$-multicomplex. Then the Stanley-Reisner ideal of the Murai sphere $\Bier_c(M)$ is equal to the sum of the three ideals:
\[
I_{SR}(\Bier_c(M))=\pol(I_c(M))+\pol^\ast(I_c(M^\vee))+\pol(x_{1,c_{1}+1},\ldots,x_{m,c_{m}+1}).
\]
\end{theorem}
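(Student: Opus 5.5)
The plan is to characterize the faces of $\Bier_c(M)$ directly from the facet description given above, and then read off the minimal non-faces. Identify the variable $x_{i,j}$ with the vertex $\gen{x}{i}{j}$. The Stanley--Reisner ideal is generated by the squarefree monomials $x_F=\prod_{v\in F}v$ of non-faces $F$. It therefore suffices to prove two things: each generator of the three ideals on the right-hand side is $x_F$ for a non-face $F$, and every non-face contains the support of one of those generators.

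\textbf{Step 1: face criterion.} For $F\subseteq\tilde X$ put $G=\tilde X\setminus F$ and $G_i=G\cap\tilde X_i$. Whenever all $G_i\neq\emptyset$, let $\alpha_i=\min\{j:\gen{x}{i}{j}\in G_i\}$ and $\beta_i=\max\{j:\gen{x}{i}{j}\in G_i\}$. The claim is that $F\in\Bier_c(M)$ if and only if
\[
G_i\neq\emptyset \text{ for all } i,\qquad x^\alpha\in M,\qquad x^\beta\notin M.
\]

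\emph{Necessity.} Suppose $F\subseteq F_c(x^a)\setminus\{\gen{x}{i}{j}\}$ with $x^a\in M$, $x^a\diamond x_i^j\notin M$ and $a_i<j$. Then $G$ contains every $\gen{x}{k}{a_k}$, so each $G_k$ is nonempty. We also get $\alpha\le a$ componentwise, which gives $x^\alpha\in M$ because $M$ is closed under divisors. Finally $G$ contains $\gen{x}{i}{j}$, so $\beta\ge a\diamond j$ componentwise, which gives $x^\beta\notin M$.

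\emph{Sufficiency.} Walk from $\alpha$ to $\beta$, raising one coordinate at a time from $\alpha_k$ directly to $\beta_k$. Every intermediate exponent $\gamma$ has $\gamma_k\in\{\alpha_k,\beta_k\}$, so each $\gen{x}{k}{\gamma_k}$ lies in $G$. Since $x^\alpha\in M$ and $x^\beta\notin M$, some step is the first to leave $M$. At that step $x^\gamma\in M$ and $x^\gamma\diamond x_i^{\beta_i}\notin M$, and necessarily $\gamma_i=\alpha_i<\beta_i$. Hence $F\subseteq F_c(x^\gamma)\setminus\{\gen{x}{i}{\beta_i}\}$, which is a facet.

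\textbf{Step 2: matching the three ideals.} By Step 1, $F$ is a non-face exactly when one of the following holds.
\begin{itemize}
\item Some $G_i=\emptyset$, i.e.\ $F\supseteq\tilde X_i$. These are the supports of the monomials $x_{i,0}\cdots x_{i,c_i}$, which is how I read the third summand, the polarization of the powers $x_i^{c_i+1}$.
\item $x^\alpha\notin M$. Then some $x^b\in G(I_c(M))$ divides $x^\alpha$. Since $\alpha_i\ge b_i$ means $\gen{x}{i}{0},\dots,\gen{x}{i}{b_i-1}\in F$, this says exactly that $F$ contains the support of $\pol(x^b)$. Conversely, containing such a support forces $\alpha\ge b$ and hence $x^\alpha\notin M$.
\item $x^\beta\in M$. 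Then $x^\beta$ divides some $x^{c-b}\in M$, i.e.\ $x^b\notin M^\vee$, and by Proposition~\ref{MinimalNonMonmomialsProp} we may take $x^b\in G(I_c(M^\vee))$. The condition $\beta_i\le c_i-b_i$ means that $F$ contains $\gen{x}{i}{c_i-b_i+1},\dots,\gen{x}{i}{c_i}$, i.e.\ the support of $\pol^\ast(x^b)$. The converse is symmetric.
\end{itemize}
Thus the non-faces are exactly the sets containing a support of a generator from one of the three ideals. Consequently the Stanley--Reisner ideal equals their sum.

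The main obstacle is the sufficiency half of Step 1. There one must produce an honest facet $F_c(x^\gamma)\setminus\{\gen{x}{i}{j}\}$ of the sphere, not just of the ball, and the monotone-path argument is exactly where the condition $a_i<j$ must be checked. The rest is careful bookkeeping of the indices in the two polarizations.
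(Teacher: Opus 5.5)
Your proof is correct. Note that the paper gives no proof of this statement: it quotes it from \cite{Mu} (Theorem 3.6 there). So you are replacing a citation with a self-contained argument, not varying one in the text. You are also right to read the third summand as $\pol(x_1^{c_1+1},\ldots,x_m^{c_m+1})$; that is how the paper uses it in the proof of Theorem~\ref{NeighborlySpheresThm}(a).

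Your Step 1 in fact shows that $\Bier_c(M)=B_c(M)\cap B'$:
\begin{itemize}
\item $B_c(M)$ consists exactly of the sets $F$ with all $G_i\neq\emptyset$ and $x^\alpha\in M$;
\item $B'$ is $B_c(M^\vee)$ relabelled by $j\mapsto c_i-j$ inside each $\tilde X_i$, and it consists exactly of the $F$ with all $G_i\neq\emptyset$ and $x^\beta\notin M$.
\end{itemize}
The Stanley--Reisner ideal of an intersection of two complexes on the same vertex set is the sum of their ideals. This is the conceptual reason the answer is a sum. Your Step 2 is then just the computation of the two ideals: $I_{B_c(M)}=\pol(I_c(M))+(x_{i,0}\cdots x_{i,c_i})$ and $I_{B'}=\pol^\ast(I_c(M^\vee))+(x_{i,0}\cdots x_{i,c_i})$.

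Your monotone walk in the sufficiency half is the same interpolation the paper itself uses later, namely the vectors $a^j=(c_1-b_1,\ldots,c_j-b_j,a_{j+1},\ldots,a_m)$ in the $k$-neighborliness criteria. So the argument fits the paper's own toolkit.

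Two small bookkeeping points:
\begin{itemize}
\item In the converse directions of your second and third bullets, say explicitly that if some $G_i=\emptyset$ you are already in the first bullet, so $\alpha$ and $\beta$ are defined whenever you use them.
\item Proposition~\ref{MinimalNonMonmomialsProp} is not needed in the third bullet. The condition $x^\beta\in M$ means $x^{c-\beta}\notin M^\vee$, i.e.\ $x^{c-\beta}\in I_c(M^\vee)$. Any minimal generator $x^b$ of $I_c(M^\vee)$ dividing $x^{c-\beta}$ then satisfies $\beta\le c-b$ componentwise.
\end{itemize}
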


Here is a corollary of the previous theorem, which describes a particularly important class of minimal non-faces of a Murai sphere, its ghost vertices. Namely, we are going to obtain necessary and sufficient conditions for a singleton to be in $\min(\Bier_c(M))$.

\begin{proposition}\label{CharacterisationGhostVerticesProp}
Let $c=(c_1,\ldots,c_m)\in\N^m, m\geq 1$ and $M$ be a proper $c$-multicomplex. Suppose $v$ is a ghost vertex of $\Bier_c(M)$. Then one of the next cases takes place.
\begin{itemize}
\item[(a)] $v=\gen{x}{i}{0}$ for some $1\leq i\leq m$. This holds if and only if $x_i\notin M$;
\item[(b)] $v=\gen{x}{i}{c_i}$ for some $1\leq i\leq m$. This holds if and only if $(x_i)^c\in M$.
\end{itemize}
Furthermore, the following description of ghost vertices of a Murai sphere holds.
\begin{itemize}
\item If $c_i>1$ and $\gen{x}{i}{0}$ is a ghost vertex, then any ghost vertex of $\Bier_c(M)$ has the form $\gen{x}{j}{0}$ for some $1\leq j\leq m$; 
\item If $c_i>1$ and $\gen{x}{i}{c_i}$ is a ghost vertex, then any ghost vertex of $\Bier_c(M)$ has the form $\gen{x}{j}{c_j}$ for some $1\leq j\leq m$. 
\end{itemize}
In particular, $\gen{x}{i}{0},\gen{x}{i}{c_i}\in\min(\Bier_c(M))$ if and only if $M=\langle (x_i)^c\rangle$ and $c_i=1$.
\end{proposition}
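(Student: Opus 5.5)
The plan is to read off the ghost vertices, i.e. singleton minimal non-faces, directly from Theorem~\ref{FaceIdealMuraiThm}. A vertex $\gen{x}{i}{j}$ of $\tilde{X}$ corresponds to the variable $x_{i,j}$. It is a ghost vertex exactly when $x_{i,j}$ is a degree-one element of $G(I_{SR}(\Bier_c(M)))$. Since the ideal is squarefree monomial, this is equivalent to $x_{i,j}$ appearing among the generators of one of the three summands. The third summand only involves the variables $x_{i,c_i+1}$, which do not correspond to vertices of $\tilde X$ (indices run from $0$ to $c_i$). So it can be ignored, and only the polarization and the $\ast$-polarization remain.

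For $\pol(I_c(M))$, the generators are $\pol(x^a)$ for $x^a\in G(I_c(M))=\min(M)$ by Proposition~\ref{MinimalNonMonmomialsProp}. Now $\pol(x^a)$ has degree $|a|$, so it is a single variable iff $x^a=x_i$, in which case $\pol(x_i)=x_{i,0}$. Since $M$ is nonempty (it contains $1$), $x_i$ is a minimal non-element iff $x_i\notin M$; this gives (a).

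For $\pol^\ast(I_c(M^\vee))$, the generators are $\pol^\ast(x^a)$ for $x^a\in\min(M^\vee)$. Again this is a variable iff $x^a=x_i$, giving $x_{i,c_i}$. We have $x_i\in\min(M^\vee)$ iff $x_i\notin M^\vee$, and by the definition of the dual this holds iff $(x_i)^c\in M$; this gives (b). Finally, one should check that no variable $x_{i,j}$ with $0<j<c_i$ can arise, which is clear from the shape of the degree-one generators.

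For the "furthermore" part, suppose $c_i>1$ and $x_i\notin M$. Then by divisibility-closedness, no element of $M$ has positive $x_i$-exponent. A ghost vertex of the form $\gen{x}{j}{c_j}$ would require $(x_j)^c\in M$. But the $x_i$-exponent of $(x_j)^c$ is $c_i\geq 2$ if $j\neq i$, and $c_i-1\geq1$ if $j=i$, which is a contradiction. Hence all ghost vertices are of type (a). The second bullet follows by applying the first to $M^\vee$, using $\Bier_c(M)\cong\Bier_c(M^\vee)$ with the roles of $\gen{x}{j}{0}$ and $\gen{x}{j}{c_j}$ exchanged. Alternatively, one can argue directly: $(x_i)^c\in M$ with $c_i>1$ forces $x_i\in M$, and if $x_j\notin M$ then $x_j\nmid (x_i)^c$, which is impossible since $c_j\ge1$ for $j\neq i$ and $c_i-1\ge1$.

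For the last claim, assume both $x_i\notin M$ and $(x_i)^c\in M$. The $x_i$-exponent of $(x_i)^c$ is $c_i-1$, and it must vanish, so $c_i=1$. Then every $c$-monomial with $x_i$-exponent $0$ divides $(x_i)^c$, while no element of $M$ involves $x_i$. Hence $M=\langle (x_i)^c\rangle$. Conversely, if $M=\langle (x_i)^c\rangle$ with $c_i=1$, then (a) and (b) apply immediately. The only slightly delicate point in the whole argument is the bookkeeping that the third summand contributes no vertices and that singleton generators of a sum of monomial ideals come from singleton generators of the summands; neither is a real obstacle.
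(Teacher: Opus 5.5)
This is essentially the paper's own argument. You read the singleton generators of $I_{SR}(\Bier_c(M))$ off Theorem~\ref{FaceIdealMuraiThm} via Proposition~\ref{MinimalNonMonmomialsProp}, then handle the two bullets and the final claim by divisibility; your direct argument that $x_j \mid (x_i)^c$ for all $j$ when $c_i>1$ is exactly the fact the paper invokes.

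One justification needs correcting: the third summand is not generated by non-vertex variables $x_{i,c_i+1}$. The notation in Theorem~\ref{FaceIdealMuraiThm} stands for
\[
\pol(x_1^{c_1+1},\ldots,x_m^{c_m+1})=(x_{i,0}x_{i,1}\cdots x_{i,c_i}\mid 1\le i\le m).
\]
This summand records that each $\tilde X_i$ is a non-face (compare the proof of Theorem~\ref{NeighborlySpheresThm}(a)), so it does involve vertex variables. It contributes no ghost vertex because each of its generators has degree $c_i+1\ge 2$; that is precisely where the paper uses $c_i\ge 1$. With that reason substituted, the rest of your proof goes through as written.
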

\begin{proof}
Note that $v=\gen{x}{i}{j}\in\tilde{X}$ is a ghost vertex of $\Bier_c(M)$ if and only if $x_i^j\in G(I_{SR}(\Bier_c(M)))$. Since $c_i\geq 1$ for all $1\leq i\leq m$, due to Theorem~\ref{FaceIdealMuraiThm} we have $x_{i,j}\in\pol(I_c(M))+\pol^{*}(I_c(M^\vee))$. It remains to apply the definition of polarization to observe that $x_{i,j}\in\pol(I_c(M))$ is equivalent to $j=0$, which is the case if and only if $x_i\notin M$; $x_{i,j}\in\pol(I_c(M^\vee))$ is equivalent to $j=c_i$, which is the case if and only if $x_i\notin M^\vee$, or equivalently, $(x_i)^c\in M$. This proves statements (a) and (b).

The last two statements follow from the fact that if $c_i>1$, then $(x_i)^c$ divides each generator $x_j$ with $1\leq j\leq m$ of the polynomial algebra $S$.

Finally, $\gen{x}{i}{0},\gen{x}{i}{c_i}\in\min(\Bier_c(M))$ if and only if $x_i\notin M$ and $x_i\notin M^\vee$, which is equivalent to $x_i\notin M$ and $(x_i)^c\in M$. This is the case if and only if $c_i=1$ and $M=\langle (x_i)^c\rangle=M^\vee$.
\end{proof}

Denote by $V\subseteq [m]$ the set of indices of ghost vertices of $\Bier_c(M)$. Then Proposition~\ref{CharacterisationGhostVerticesProp} shows that the ghost vertices form the set $\{\gen{x}{i}{0}\,|\,i\in V\}$, or $\{\gen{x}{i}{c_i}\,|\,i\in V\}$, or $\{\gen{x}{i}{0},\gen{x}{i}{1}\}$ if $M=\langle (x_i)^c\rangle$ and $c_i=1$.

%%%%%%%%%%%%%%%%%%%%%%%%%%%%%%%%%%%%%%%%%%%%%%%%%%%%%%%%%%%%
\section{$k$-neighborly Murai spheres}
%%%%%%%%%%%%%%%%%%%%%%%%%%%%%%%%%%%%%%%%%%%%%%%%%%%%%%%%%%%%

As was shown in Example~\ref{MuraiWithMequalsOneExample}, there exist infinitely many Murai spheres (combinatorially) different from Bier spheres such that any two vertices are linked by an edge. In this section, we are going to consider this property of Murai spheres and its generalization in more detail.

The rest of the paper will be devoted to the class of Murai spheres that possess the following property. 

\begin{definition}
Let $k\geq 2$ be an integer. A \emph{$k$-neighborly} complex is a simplicial complex in which every set of $k$ or fewer geometrical vertices forms a simplex. A $d$-sphere is called \emph{neighborly} if it is $\lceil d/2\rceil$-neighborly.
\end{definition}

\begin{example}
Let $c=(k+1,k)$ and $M=\langle x_1^ix_2^{k-i}\,|\, i=0,\ldots,k\rangle$. Then $\Bier_c(M)$ is a neighborly sphere.

Indeed, since $\dim(\Bier_c(M))=2k-1$, we must prove that each $k$ of its vertices form a face. Suppose $\sigma\subset\tilde{X}=\{\gen{x}{1}{0},\ldots,\gen{x}{1}{k+1},\gen{x}{2}{0},\ldots,\gen{x}{2}{k}\}$ has $k$ elements. Then there exists $0\leq \alpha\leq k$ such that $\{\gen{x}{1}{\alpha},\gen{x}{2}{k-\alpha}\}\cap\sigma=\emptyset$; by definition of $M$ we also have $x_1^\alpha x_2^{k-\alpha}\in M$. The set $\rho=\{\gen{x}{1}{\alpha+1},\ldots,\gen{x}{1}{k+1},\gen{x}{2}{k-\alpha+1},\ldots,\gen{x}{2}{k}\}$
has $k+1$ elements, so there exists $\gen{x}{j}{\beta}\in\rho\setminus\sigma$. By definition of the $\diamond$-operation, the degree of the monomial $x_1^\alpha x_2^{k-\alpha}\diamond x_j^\beta$ is greater than $k$, hence $x_1^\alpha x_2^{k-\alpha}\diamond x_j^\beta\not\in M$. Therefore, $G(x_1^\alpha x_2^{k-\alpha};x_j^\beta)$ is a facet of the Murai sphere which contains $\sigma$.
\end{example}

\begin{lemma}\label{NeighbornessAndCcomponentsLemma}
Let $c=(c_1,\ldots,c_m)$ and let $\Bier_c(M)$ be a $k$-neighborly complex. Then $c_i\ge k$, $x_i^k\in M$, and $x_i^k\in M^\vee$ for all $i\not\in V$. If $\Bier_c(M)$ is $k$-neighborly without ghost vertices and $m=2$, then at least one of the $c_i$ must be greater than $k$.
\end{lemma}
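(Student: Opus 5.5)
The plan is to work directly with the facet description
$$\max(\Bier_c(M))=\{F_c(x^a)\setminus\{\gen{x}{i}{j}\}\mid x^a\in M,\ x^a\diamond x_i^j\notin M,\ a_i<j\le c_i\}$$
and to test $k$-neighborliness on well-chosen $k$-subsets of $\tilde X_i$. For $i\notin V$, Proposition~\ref{CharacterisationGhostVerticesProp} says that all of $\gen{x}{i}{0},\ldots,\gen{x}{i}{c_i}$ are geometric vertices. Every facet $F_c(x^a)\setminus\{\cdot\}$ misses the vertex $\gen{x}{i}{a_i}$, so the whole set $\tilde X_i$ is never a face.

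\emph{Step 1: $c_i\ge k$.} If $c_i+1\le k$, then $k$-neighborliness would force $\tilde X_i$ to be a face. This contradicts the observation above, so $c_i\ge k$.

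\emph{Step 2: $x_i^k\in M$.} Take $\sigma=\{\gen{x}{i}{0},\ldots,\gen{x}{i}{k-1}\}$; it consists of $k$ geometric vertices, so it lies in some facet $F_c(x^a)\setminus\{\gen{x}{l}{j}\}$. That facet misses $\gen{x}{i}{a_i}$, hence $a_i\ge k$. Since $x^a\in M$ and $M$ is closed under divisibility, $x_i^k\in M$.

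\emph{Step 3: $x_i^k\in M^\vee$.} Take $\sigma=\{\gen{x}{i}{c_i-k+1},\ldots,\gen{x}{i}{c_i}\}$. A facet $F_c(x^a)\setminus\{\gen{x}{l}{j}\}$ containing $\sigma$ misses $\gen{x}{i}{a_i}$, so $a_i\le c_i-k$. There are two cases.
\begin{itemize}
\item[(i)] If $l=i$, then $j$ is also at most $c_i-k$, since $\gen{x}{i}{j}\notin\sigma$. The monomial $x^a\diamond x_i^j$ is a $c$-monomial not in $M$, so its complement $(x^a\diamond x_i^j)^c$ lies in $M^\vee$. This complement has $i$-th exponent $c_i-j\ge k$, giving $x_i^k\in M^\vee$.
\item[(ii)] If $l\ne i$, then $x^a\diamond x_l^j\notin M$ has $i$-th exponent $a_i\le c_i-k$. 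Its complement lies in $M^\vee$ and has $i$-th exponent at least $k$, giving the same conclusion.
\end{itemize}
Alternatively, one can invoke the symmetry $\Bier_c(M)\cong\Bier_c(M^\vee)$ and apply Step 2 to $M^\vee$. For this route one must check that the isomorphism sends $\gen{x}{i}{j}$ to $\gen{x}{i}{c_i-j}$ and preserves $V$.

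\emph{Step 4: the case $m=2$ without ghost vertices.} By Step 1 we have $c_1,c_2\ge k$, so suppose $c_1=c_2=k$. Then $|c|=2k$, and the sphere has dimension $d=2k-2$ and $2k+2$ vertices. Being $k$-neighborly with $k=d/2+1>\lceil d/2\rceil$, it must be the boundary of a simplex. This is the standard fact recalled at the start of Section 3: a $d$-sphere that is $(\lfloor d/2\rfloor+1)$-neighborly is $\partial\Delta^{d+1}$, which follows from Dehn--Sommerville or Alexander duality. A simplex boundary has only $d+2=2k$ vertices, contradicting the count of $2k+2$, so some $c_i>k$.

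The step I expect to need the most care is Step 3: getting the inequality directions right in both cases $l=i$ and $l\ne i$. If Step 4 must avoid quoting the neighborliness bound, I would instead try to derive a direct contradiction from the sets in Steps 2 and 3 together with mixed sets such as $\{\gen{x}{1}{0},\ldots,\gen{x}{1}{k-2},\gen{x}{2}{0}\}$. That route is where I expect the real combinatorial obstacle to lie.
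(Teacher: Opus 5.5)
Your proof is correct, and Steps~1 and~2 are essentially the paper's argument. The paper phrases Step~2 through the face ideal of Theorem~\ref{FaceIdealMuraiThm}, while you read it off the facet description; the content is the same.

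In Step~3 you verify $x_i^k\in M^\vee$ directly from the facets, whereas the paper simply invokes $\Bier_c(M^\vee)\cong\Bier_c(M)$. Your version has the advantage of not needing the explicit form of that isomorphism. The paper uses tacitly that it is $\gen{x}{i}{j}\mapsto\gen{x}{i}{c_i-j}$ and preserves $V$.

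The genuine divergence is Step~4. You appeal to the general upper bound on neighborliness of a sphere that is not a simplex boundary. The paper instead gets the contradiction algebraically from Steps~2 and~3 alone. Since $x_2^k\in M^\vee$, we have $(x_2^k)^c=x_1^{c_1}x_2^{c_2-k}\notin M$. For $c=(k,k)$ this reads $x_1^k\notin M$, contradicting $x_1^k\in M$. So the ``direct route'' you expected to be the real obstacle is a one-line consequence of what you had already proved; no mixed sets are needed. Your topological route is valid, but it imports an external theorem for something elementary here.

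One correction to the fact you quote. In general, a $d$-sphere that is $(\lceil d/2\rceil+1)$-neighborly is a simplex boundary. With $\lfloor d/2\rfloor+1$ the statement is false for odd $d$: for example, $\Delta(7,4)$ is a $2$-neighborly $3$-sphere. Since your $d=2k-2$ is even, the two thresholds coincide and your application is fine.
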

\begin{proof}
Let $i\not\in V$ and $c_i<k$. Then the set $\{\gen{x}{i}{0},\ldots,\gen{x}{i}{c_i}\}$ has no more than $k$ elements, but it is not contained in any facet $G(x_1^{\alpha_1}\cdots x_m^{\alpha_m};x_j^\beta)$. If $\Bier_c(M)$ is $k$-neighborly, then $x_i^k\in M$, since otherwise $\{\gen{x}{i}{0},\ldots,\gen{x}{i}{k-1}\}$ is a non-face in $\Bier_c(M)$ by Theorem~\ref{FaceIdealMuraiThm}, a contradiction. Since $\Bier_c(M^\vee)\cong\Bier_c(M)$, we also have $x_i^k\in M^\vee$.

For $m=2$, we obtain $x_1^{c_1}x_2^{c_2-k}=(x_2^k)^c\not\in M$, since from above we have $x_2^k\in M^\vee$. Hence, $c\ne(k,k)$, so at least one of the $c_i$ must be greater than $k$.    
\end{proof}

\begin{theorem}
Let $c=(c_1,\ldots,c_m)$, let $\Bier_c(M)$ be without ghost vertices, and let $k\ge 2$.
The Murai sphere $\Bier_c(M)$ is $k$-neighborly if and only if $c_i\ge k$ for all $i$ and for every $a=(a_1,\ldots,a_m)$ such that $|a|:=a_1+\cdots+a_m\le k$ we have $x^a\in M$ and $(x^a)^c\not\in M$.
\end{theorem}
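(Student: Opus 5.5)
The plan is to read $k$-neighborliness directly off the Stanley--Reisner ideal given by Theorem~\ref{FaceIdealMuraiThm}. We identify the variable $x_{i,j}$ with the vertex $\gen{x}{i}{j}$. Since there are no ghost vertices, every element of $\tilde{X}$ is a geometric vertex. Hence $\Bier_c(M)$ is $k$-neighborly if and only if no subset $\sigma\subseteq\tilde{X}$ with $|\sigma|\le k$ is a non-face. Equivalently, $I_{SR}(\Bier_c(M))$ contains no squarefree monomial of degree $\le k$. A squarefree monomial lies in a monomial ideal exactly when it is divisible by a minimal generator of that ideal. So the condition becomes: every minimal generator of $I_{SR}(\Bier_c(M))$ has degree $>k$. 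It therefore suffices to check the generators of the three summands in Theorem~\ref{FaceIdealMuraiThm}.

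The degrees of these generators are as follows.
\begin{itemize}
\item The third summand contributes the full columns $\tilde{X}_i$. Each is the product $x_{i,0}\cdots x_{i,c_i}$, of degree $c_i+1$.
\item For $x^a\in G(I_c(M))=\min(M)$, the monomial $\pol(x^a)=\prod_i x_{i,0}\cdots x_{i,a_i-1}$ is squarefree of degree exactly $|a|$.
\item Likewise, $\pol^\ast(x^b)$ for $x^b\in G(I_c(M^\vee))=\min(M^\vee)$ has degree $|b|$.
\end{itemize}
Thus $k$-neighborliness is equivalent to three conditions:
\begin{itemize}
\item[(i)] $c_i+1>k$ for all $i$;
\item[(ii)] every element of $\min(M)$ has degree $>k$;
\item[(iii)] every element of $\min(M^\vee)$ has degree $>k$.
\end{itemize}
One caveat is that the three summands together may not form a minimal generating set. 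Still, any non-face of size $\le k$ is divisible by some generator of one of the summands, and that generator then has degree $\le k$. Conversely, each listed generator is itself a non-face of the stated size. So minimality of the combined set is never needed.

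Next I translate (i)--(iii) into the statement. Condition (i) is $c_i\ge k$; it also agrees with Lemma~\ref{NeighbornessAndCcomponentsLemma}. Assume (i). Then every monomial $x^a$ with $|a|\le k$ is a $c$-monomial.
\begin{itemize}
\item Condition (ii) holds if and only if every $c$-monomial of degree $\le k$ lies in $M$. Indeed, if some $x^a\notin M$ with $|a|\le k$, then some divisor of $x^a$ lies in $\min(M)$ and has degree $\le k$. The converse direction is immediate.
\item Similarly, (iii) says $x^a\in M^\vee$ for all $|a|\le k$.
\item By the definition of the dual, $x^a\in M^\vee$ if and only if $x^a=(x^b)^c$ for some $c$-monomial $x^b\notin M$. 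Here $x^b=(x^a)^c$, so this is equivalent to $(x^a)^c\notin M$.
\end{itemize}
Combining these gives exactly the stated criterion, in both directions.

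The main obstacle I expect is bookkeeping rather than any conceptual difficulty. First, the polarizations are defined for $\bar c$-ideals, so I must check that $\pol(x^a)$ and $\pol^\ast(x^b)$ only involve indices $0\le j\le c_i$; this holds since $a_i,b_i\le c_i$. Second, the identification of variables with vertices must be handled consistently for $\pol^\ast$, whose indices run downward from $c_i$. Finally, I will check that the no-ghost-vertex hypothesis is used only to say that $k$-neighborliness concerns all of $\tilde{X}$. This matters because Proposition~\ref{CharacterisationGhostVerticesProp} excludes degree-one generators, which is consistent with the case $k\ge 2$.
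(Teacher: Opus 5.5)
Your argument is correct, but it follows a different route from the paper's proof.

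\textbf{The paper's route.} The paper argues directly with the facets $F_c(x^b)\setminus\{\gen{x}{i}{\beta}\}$ of the sphere.
\begin{itemize}
\item[(1)] For necessity, it takes the set $\{\gen{x}{i}{j}\mid j<a_i\}$ of size $|a|\le k$, places it inside some facet, and reads off $x^a\in M$. It gets $c_i\ge k$ from Lemma~\ref{NeighbornessAndCcomponentsLemma}. It gets $(x^a)^c\notin M$ by rerunning the same argument for $M^\vee$, using $\Bier_c(M)\cong\Bier_c(M^\vee)$.
\item[(2)] For sufficiency, it builds an explicit facet containing an arbitrary $k$-set $\sigma$. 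It chooses $a_i,b_i\le k_i$ with $\gen{x}{i}{a_i},\gen{x}{i}{c_i-b_i}\notin\sigma$. It then walks coordinate by coordinate from $x^a\in M$ to $(x^b)^c\notin M$. The first step that leaves $M$ gives the required facet.
\end{itemize}

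\textbf{Your route.} You read everything off the generators in Theorem~\ref{FaceIdealMuraiThm}. Both directions then become a single degree count. The dual condition comes from the $\pol^\ast$ summand rather than from the isomorphism $\Bier_c(M)\cong\Bier_c(M^\vee)$. You also do not need Lemma~\ref{NeighbornessAndCcomponentsLemma} separately.

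\textbf{What each approach buys.} Your sufficiency direction rests on the nontrivial containment $I_{SR}(\Bier_c(M))\subseteq\pol(I_c(M))+\pol^\ast(I_c(M^\vee))+(\text{columns})$ from Murai's theorem. The paper's interpolation argument needs only the facet description. In effect, it reproves the relevant part of Murai's result in this setting.

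In return, your version is shorter and symmetric in $M$ and $M^\vee$. It also gives slightly more. For a Murai sphere without ghost vertices, the largest $k$ for which it is $k$-neighborly is one less than the minimum of three quantities: $\min_i(c_i+1)$, the smallest degree in $\min(M)$, and the smallest degree in $\min(M^\vee)$.
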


\begin{proof}
Let $\Bier_c(M)$ be $k$-neighborly. By Lemma~\ref{NeighbornessAndCcomponentsLemma}, we have $c_i\ge k$ for all $i$. Suppose $a=(a_1,\ldots,a_m)$ is such that $|a|\le k$. Then the set $\sigma=\{\gen{x}{i}{j}\mid 1\le i\le m,0\le j<a_i\}$ has $k$ elements, so it is a simplex in the Murai sphere. Hnence there exists a facet $G(x^b;x_i^\beta)$ containing $\sigma$. By the definition of the Murai sphere, we have $x^b\in M$, and by the definition of $\sigma$, we have $b_j\ge a_j$ for all $j$, hence $x^a\in M$. Since $\Bier_c(M^\vee)\cong\Bier_c(M)$ is $k$-neighborly, the same argument shows that $x^a\in M^\vee$. By definition of the Alexander-dual multicomplex, this means that $(x^a)^c\notin M$.

Suppose now that we have $c_i\ge k$ for all $i$ and for every $a=(a_1,\ldots,a_m)$ such that $|a|\le k$ we have $x^a\in M$ and $(x^a)^c\not\in M$. Let $\sigma$ be an arbitrary subset of vertices of the Murai sphere with $k$ elements. Let $k_j$ be the number of vertices in $\sigma$ of the type $\gen{x}{j}{*}$. Then, for every $i$ there exist $a_i,b_i\in\{0,\ldots, k_i\}$ such that $\gen{x}{i}{a_i},\gen{x}{i}{c_i-b_i}\not\in\sigma$; it may happen that $a_i=c_i-b_i$. Let us define $a=(a_1,\ldots,a_m)$ and $b=(b_1,\ldots,b_m)$. Since $|a|\le k_1+\cdots+k_m=k$ we have $x^a\in M$. Let $a^j=(c_1-b_1,\ldots,c_{j}-b_{j},a_{j+1},\ldots,a_m)$. Then for $j=0$ we have $x^{a^0}=x^a\in M$ but for $j=m$ we have $x^{a^{m}}=(x^b)^c\not\in M$, since $|b|\le k_1+\cdots+k_m=k$. Hence there exists $j\in\{1,\ldots, m\}$ such that $x^{a^{j-1}}\in M$ but $x^{a^{j}}\not\in M$. Then $G(x^{a^{j-1}};x_j^{c_j-b_j})$ is a facet in the Murai sphere containing the set $\sigma$, hence the Murai sphere $\Bier_c(M)$ is $k$-neighborly.
\end{proof}

\begin{theorem}\label{NeighborlySpheresThm}
Let $c=(c_1,\ldots,c_m)$ and $k\ge 2$.
\begin{itemize}
\item[(a)] Suppose there exists $i$ such that $\gen{x}{i}{0}$ and $\gen{x}{i}{c_i}$ are ghost vertices of the Murai sphere $\Bier_c(M)$.
Then $\Bier_c(M)$ is $k$-neighborly if and only if $c_j\ge k$ for all $j\neq i$.

\item[(b)] Suppose there exists $V\subset [m]$ such that $\{\gen{x}{i}{0}\mid i\in V\}$ is the set of ghost vertices of $\Bier_c(M)$.
Then $\Bier_c(M)$ is $k$-neighborly if and only if $c_i\ge k$ for all $i\not\in V$ and for every $a=(a_1,\ldots,a_m)$ such that $|a|\le k$ and $a_i=0$ for all $i\in V$ we have $x^a\in M$, and for every $a=(a_1,\ldots,a_m)$ such that $|a|\le k$ we have $(x^a)^c\not\in M$.

\item[(c)] Suppose there exists $V\subset [m]$ such that $\{\gen{x}{i}{c_i}\mid i\in V\}$ is the set of ghost vertices of $\Bier_c(M)$.
Then $\Bier_c(M)$ is $k$-neighborly if and only if $c_i\ge k$ for all $i\not\in V$ and for every $a=(a_1,\ldots,a_m)$ such that $|a|\le k$ we have $x^a\in M$, and for every $a=(a_1,\ldots,a_m)$ such that $|a|\le k$ and $a_i=0$ for all $i\in V$ we have $(x^a)^c\not\in M$.
\end{itemize}
\end{theorem}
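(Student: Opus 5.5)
The plan is to mimic the proof of the preceding theorem (the ghost-vertex-free case), adjusting only for the ghost vertices. Recall that a facet has the form $G(x^b;x_j^\beta)=F_c(x^b)\setminus\{\gen{x}{j}{\beta}\}$ with $x^b\in M$, $x^b\diamond x_j^\beta\notin M$ and $b_j<\beta\le c_j$. By Proposition~\ref{CharacterisationGhostVerticesProp}, the ghost vertices are exactly the $\gen{x}{i}{0}$ with $x_i\notin M$ and the $\gen{x}{i}{c_i}$ with $(x_i)^c\in M$. Only geometric vertices count for $k$-neighborliness, so throughout we work with subsets $\sigma$ of non-ghost vertices with $|\sigma|\le k$. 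In each direction of each case we either extract monomial conditions from faces, or build a facet containing $\sigma$ by the interpolation trick $a^0,\dots,a^m$.

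\textbf{Case (a).} Here $M=\langle(x_i)^c\rangle$ with $c_i=1$, so the only monomials missing from $M$ are those with $a_i=1$ together with $a_j=c_j$ for some $j\ne i$. Writing out the facets, one should find that $\Bier_c(M)$ is the join of the boundaries of the simplices on the sets $\tilde X_j$, $j\neq i$, with the two ghost vertices removed. Compare Example~\ref{MuraiWithMequalsOneExample}: $(x_i)^c$ is the "top" monomial in the other variables. A join of simplex boundaries $\partial\Delta^{c_j}$ is $k$-neighborly if and only if each factor is, i.e.\ iff $c_j+1>k$. This gives exactly the condition $c_j\ge k$ for all $j\ne i$. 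The main work here is verifying the join description directly from the facet formula.

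\textbf{Case (b), necessity.} Lemma~\ref{NeighbornessAndCcomponentsLemma} gives $c_i\ge k$ for $i\notin V$. Take $|a|\le k$ with $a_i=0$ on $V$. The set $\sigma=\{\gen{x}{i}{j}: 0\le j<a_i\}$ then avoids the ghost vertices and has at most $k$ elements, so it lies in some facet $G(x^b;\cdot)$. Hence $b\ge a$ and $x^a\in M$. For the dual condition, take $\sigma^*=\{\gen{x}{i}{j}: c_i-a_i<j\le c_i\}$. Its vertices are non-ghost, because the ghost vertices here are of the form $\gen{x}{i}{0}$, and if $c_i>1$ they never coincide with top vertices. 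A facet containing $\sigma^*$ gives $x^{b}\in M$ with $b_i\le c_i-a_i$ for all $i$. I expect that argument to give membership rather than non-membership, so I would instead argue via $M^\vee$: in $\Bier_c(M^\vee)\cong\Bier_c(M)$ the ghost vertices become top vertices, and $\sigma^*$ corresponds to a bottom set. Then the first argument applied to $M^\vee$ yields $x^a\in M^\vee$, i.e.\ $(x^a)^c\notin M$. The care needed is that under this isomorphism ($\gen{x}{i}{j}\leftrightarrow\gen{x}{i}{c_i-j}$) no vertex of $\sigma^*$ is ghost; this holds because $a_i\le c_i$, and for $i\in V$ the vertex $\gen{x}{i}{0}$ is excluded only when $a_i=c_i$. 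That edge case can be handled by the fact that $c_i>1$ forces the structure of the ghost set.

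\textbf{Case (b), sufficiency.} This is the main obstacle. Given $\sigma$ of non-ghost vertices with $|\sigma|=k$, let $k_j$ be its counts per variable. For $i\in V$ choose $a_i=0$, which is allowed since $\gen{x}{i}{0}\notin\sigma$. For $i\notin V$ choose $a_i\le k_j$ with $\gen{x}{i}{a_i}\notin\sigma$, and choose $b_i\le k_i$ with $\gen{x}{i}{c_i-b_i}\notin\sigma$. Then $|a|\le k$ with $a$ vanishing on $V$, so $x^a\in M$, while $|b|\le k$ gives $(x^b)^c\notin M$. Interpolate along $a^j=(c_1-b_1,\dots,c_j-b_j,a_{j+1},\dots)$ to find the switch index $j$. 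Then $G(x^{a^{j-1}};x_j^{c_j-b_j})$ is a facet, provided $a_j<c_j-b_j$, and this facet contains $\sigma$. The delicate point is ensuring $a_j<c_j-b_j$, since equality would make consecutive monomials equal. This is forced because $x^{a^{j-1}}\in M$ and $x^{a^j}\notin M$ with $a^{j-1}\le a^j$ coordinatewise requires a strict increase.

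\textbf{Case (c).} This follows from (b) applied to $M^\vee$ via $\Bier_c(M)\cong\Bier_c(M^\vee)$. The conditions swap after translating $x^a\in M^\vee\iff(x^a)^c\notin M$.
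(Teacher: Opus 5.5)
Your plan follows the paper's proof closely. In (a) you identify the sphere with the join of the boundaries of the simplices on $\tilde X_j$, $j\ne i$; the paper reads this off the Stanley--Reisner ideal $(x_{i,0})+(x_{i,1})+\pol(\dots)$, while you read it off the facets. In (b) you use the bottom-set argument for necessity and the interpolation $a^0,\dots,a^m$ for sufficiency, and in (c) you pass to $M^\vee$. Three points need attention.

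\textbf{Case (a).} Your description of $M$ is wrong. Since $c_i=1$, we have $(x_i)^c=\prod_{j\ne i}x_j^{c_j}$, so $M$ is exactly the set of $c$-monomials with $a_i=0$, and \emph{every} monomial with $a_i=1$ is missing. Your version would put $x_i$ in $M$, which contradicts $\gen{x}{i}{0}$ being a ghost vertex. With the correct description, the facet formula forces $j=i$ and $\beta=1$. It gives the facets $\tilde X\setminus(\{\gen{x}{i}{0},\gen{x}{i}{1}\}\cup\{\gen{x}{j}{a_j}\mid j\ne i\})$, which is the join you want.

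\textbf{Necessity of $(x^a)^c\notin M$ in (b).} You abandoned the direct argument too early. A facet $G(x^b;x_j^\beta)$ containing $\sigma^*$ also records that $x^b\diamond x_j^\beta\notin M$. Containment forces $b_l\le c_l-a_l$ for all $l$ and $\beta\le c_j-a_j$. Hence $x^b\diamond x_j^\beta$ divides $(x^a)^c$, so $(x^a)^c\notin M$; this is the paper's one-line proof.

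Your route through $M^\vee$ is also valid, and it is how the paper argues in the ghost-free theorem. However, there is no edge case. By Proposition~\ref{CharacterisationGhostVerticesProp}, the ghost vertices of $\Bier_c(M^\vee)$ are exactly $\gen{x}{i}{c_i}$ for $i\in V$. The bottom set $\{\gen{x}{l}{t}\mid 0\le t<a_l\}$ never contains $\gen{x}{l}{c_l}$, because $a_l\le c_l$. So you need neither an explicit isomorphism nor the remark about $c_i>1$.

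\textbf{Sufficiency in (b).} The strict inequality $a_j<c_j-b_j$ at the switching index comes from downward closure alone: if $a_j\ge c_j-b_j$, then $x^{a^j}$ divides $x^{a^{j-1}}\in M$. It does not come from $a^{j-1}\le a^j$, which your choices do not guarantee.

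More importantly, for $i\in V$ you must choose $b_i$ by the same rule ($b_i\le k_i$ and $\gen{x}{i}{c_i-b_i}\notin\sigma$), not set $b_i=0$. This is exactly where the hypothesis on $(x^b)^c$, which carries no restriction on $V$, is used. The paper's write-up sets $a_i=b_i=0$ on $V$, and that fails when $\sigma\ni\gen{x}{i}{c_i}$.

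For example, take $c=(3,2)$, $M=\langle x_2^2\rangle$ and $k=2$, so the ghost set is $\{\gen{x}{1}{0}\}$. For $\sigma=\{\gen{x}{1}{3},\gen{x}{2}{0}\}$, the paper's choice produces the facet $G(x_2;x_1^3)$, which misses $\gen{x}{1}{3}$. Choosing $b_1=1$ instead yields $G(x_2;x_1^2)\supseteq\sigma$. Read with $b_i$ chosen for every $i$, your version is the correct one.
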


\begin{proof}
(a) By Proposition~\ref{CharacterisationGhostVerticesProp}, $c_i=1$ and $M=\langle (x_i)^c\rangle=M^\vee$. Hence Theorem~\ref{FaceIdealMuraiThm} implies that
$$
I_{SR}(\Bier_c(M))=(x_i^0)+(x_i^1)+\pol(x_1^{c_1+1},\ldots,x_{m}^{c_m+1}).
$$
It follows that 
$$
\min(\Bier_c(M))=\{\{\gen{x}{i}{0}\},\{\gen{x}{i}{1}\},\{\gen{x}{j}{0},\ldots,\gen{x}{j}{c_j}\}\,|\,1\leq j\neq i\leq m\},
$$
which proves statement (a).

(b) By Proposition~\ref{CharacterisationGhostVerticesProp}, $x_i\not\in M$ for all $i\in V$ and $x_j\in M$ for all $j\notin V$.

Let $\Bier_c(M)$ be $k$-neighborly. Then due to Lemma~\ref{NeighbornessAndCcomponentsLemma} we obtain $c_i\geq k$ for all $i\notin V$.

For every $a=(a_1,\ldots,a_m)$ such that $|a|\le k$ and $a_i=0$ for $i\in V$, the set $\sigma=\{\gen{x}{i}{j}\mid 1\le i\le m,0\le j<a_i\}$ is a simplex in the Murai sphere with $k$ vertices, so there exists a facet $G(x^b;x_i^\beta)$ containing $\sigma$. By definition of Murai sphere, $x^b\in M$ and $b_j\ge a_j$ for all $j$, hence $x^a\in M$. 

Let $a=(a_1,\ldots,a_m)$ such that $|a|\le k$. The set $\sigma=\{\gen{x}{i}{c_i-j}\mid 1\le i\le m, 0\le j<a_i\}$ is a simplex in the Murai sphere with $k$ vertices, so there exists a facet $G(x^b;x_i^\beta)$ containing $\sigma$. By  definition of facet of a Murai sphere, $b_j\le c_j-a_j$ for all $j$, $\beta\le c_i-a_i$, and $x^b\diamond x_i^\beta\not\in M$, so $(x^a)^c\not \in M$.

Suppose now that for every $a=(a_1,\ldots,a_m)$ such that $|a|\le k$ and $a_i=0$ for $i\in V$ we have $x^a\in M$ and for every $a=(a_1,\ldots,a_m)$ such that $|a|\le k$ we have $(x^a)^c\not\in M$.
Let $\sigma$ be an arbitrary subset of vertices of the Murai sphere with $k$ elements and $\gen{x}{i}{0}\not\in\sigma$ for $i\in V$. Let $k_j$ be the number of vertices in $\sigma$ of the type $\gen{x}{j}{*}$.
Then, for every $i\notin V$ there exist $a_i,b_i\in\{0,\ldots k_i\}$ such that both $\gen{x}{i}{a_i},\gen{x}{i}{c_i-b_i}\not\in\sigma$ (it is possible that $a_i=c_i-b_i$) and set $a_i=b_i=0$ for $i\in V$. Since $|a|\le k_1+\cdots+k_m=k$ we have $x^a\in M$. Let $a^j=(c_1-b_1,\ldots,c_{j}-b_{j},a_{j+1},\ldots,a_m)$. Then for $a^0=a$ we have $x^{a^0}\in M$ but for $b=(b_1,\ldots,b_m)$ we have $x^{a^{m}}=(x^b)^c\not\in M$, since $|b|\le k_1+\cdots+k_m=k$. Hence there is $1\leq j\leq m$ such that $x^{a^{j-1}}\in M$, but $x^{a^{j}}\not\in M$. Then $G(x^{a^{j-1}};x_j^{c_j-b_j})$ is a facet in the Murai sphere containing the set $\sigma$, hence the Murai sphere $\Bier_c(M)$ is $k$-neighborly.

(c) It follows from (b), since $\Bier_c(M^\vee)\cong \Bier_c(M)$
and, by definition of the Alexander-dual multicomplex, $x_i\not\in M$ if and only if $(x_i)^c\in M$. Hence it suffices to observe that $\gen{x}{i}{0}$ is a ghost vertex in $\Bier_c(M)$ if and only if $\gen{x}{i}{c_i}$ is a ghost vertex in $\Bier_c(M^\vee)$. This finishes the proof of the theorem.
\end{proof}

In the rest of this section, we consider the two opposite cases, where a $d$-dimensional Murai sphere is $k$-neighborly: $k=2$ and $k=\lceil \frac{d}{2} \rceil$, i.e. the neighborness case.
We start with $k=2$.

\begin{example}\label{MuraiWithCompleteGraphEx}
Let $c=(c_1,\ldots,c_m)$, where $c_i\ge 2$ for all $i$. Let $M=\langle x_1^2,\ldots,x_m^2,x_1\cdots x_m\rangle$. If $m\ge 3$ or $m=2$ and $c\ne (2,2)$ then the 1-skeleton of the Murai sphere $\Bier_c(M)$ is the complete graph without ghost vertices, i.e., the complete graph on $(c_1+1)+\cdots+(c_m+1)=|c|+m$ vertices. Indeed, let us show that for every pair of vertices $\gen{x}{i}{\alpha},\gen{x}{j}{\beta}\in V(\Bier_c(M))$, the edge $(\gen{x}{i}{\alpha},\gen{x}{j}{\beta})$ forms a simplex in the Murai sphere. First, consider the case $m\ge 3$. There is $k\in\{1,\ldots,m\}\setminus\{i,j\}$. If $\alpha\ne0$ let $\beta\ne\gamma\in\{1,2\}$. Then $(\gen{x}{i}{\alpha},\gen{x}{1j}{\beta})\in G(x_j^\gamma;x_k^2)$. If $\alpha=0$ let $\beta\ne\gamma\in\{0,1\}$. Then $(\gen{x}{i}{\alpha},\gen{x}{j}{\beta})\in G(x_ix_j^\gamma;x_k^2)$.

Let $m=2$. We may assume that $c_1>2$. If $\alpha\ne 0$ there exist $\gamma\in\{2,\ldots,c_1\}\setminus\{\alpha\}$ and $\delta\in\{1,2\}\setminus\{\beta\}$. Then $(\gen{x}{i}{\alpha},\gen{x}{j}{\beta})\in G(x_1^0x_2^\delta;x_1^\gamma)$. If $\alpha=0$ then $(\gen{x}{i}{0},\gen{x}{j}{0})\in G(x_1^1x_2^1;x_1^2)$ for $\beta=0$, and $(\gen{x}{i}{0},\gen{x}{j}{\beta})\in G(x_1^1x_2^0;x_1^3)$ for $\beta\ne 0$.
\end{example}

The following statement easily follows from the definition of Murai spheres. 

\begin{theorem}
For $d=1,2$, the only complete graph which is the 1-skeleton of some Murai sphere of dimension $d$ has $d+2$ vertices. For $d\ge 3$ the complete graph $K_n$ is the 1-skeleton of some Murai sphere of dimension $d$ if and only if $n\in\{d+2,\ldots,d+3+\lfloor \tfrac d 2\rfloor\}$.
\end{theorem}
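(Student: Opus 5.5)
For $d=1,2$ I will not use the Murai structure at all. A $1$-sphere is a cycle, and its graph is complete only for the triangle, which has $3=d+2$ vertices. A $2$-sphere with $n$ vertices has exactly $3n-6$ edges by Euler's formula. Its graph is complete exactly when $3n-6=\binom n2$, i.e. $n=4=d+2$ (since $n\ge 4$). Both the triangle and $\partial\Delta^3$ are Murai spheres, by Example~\ref{MuraiWithMequalsOneExample} with $m=1$, $M=\langle x^0\rangle$ and $c=d+2$.

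For $d\ge 3$ I will prove the upper bound (necessity), then realize every admissible $n$ (sufficiency). Any $d$-sphere has at least $d+2$ vertices, so only the upper bound needs work.

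\emph{Upper bound.} Let $\Bier_c(M)$ have complete $1$-skeleton, so it is $2$-neighborly, and let $V$ index its ghost vertices. I use $d=|c|-2$ and that the vertex set $\tilde X$ has $|c|+m$ elements.
\begin{itemize}
\item In the cases of Proposition~\ref{CharacterisationGhostVerticesProp} where ghost vertices have the form $\gen{x}{i}{0}$ or $\gen{x}{i}{c_i}$, there is exactly one ghost vertex per $i\in V$. Hence $n=|c|+m-|V|=d+2+m'$, where $m'=m-|V|$ is the number of non-ghost indices.
\item By Lemma~\ref{NeighbornessAndCcomponentsLemma}, $c_i\ge 2$ for every $i\notin V$, so $2m'\le \sum_{i\notin V}c_i\le |c|=d+2$. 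This gives $m'\le\lfloor d/2\rfloor+1$ and $n\le d+3+\lfloor d/2\rfloor$.
\item In the exceptional case where both $\gen{x}{i}{0},\gen{x}{i}{c_i}$ are ghosts, $c_i=1$ and $n=|c|+m-2$. Theorem~\ref{NeighborlySpheresThm}(a) gives $c_j\ge 2$ for $j\ne i$, so $2(m-1)\le|c|-1$ and the same bound holds.
\end{itemize}
I expect this ghost-vertex bookkeeping to be the main point needing care. Proposition~\ref{CharacterisationGhostVerticesProp} already lists all possible ghost configurations, so the step should go through.

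\emph{Realization.} Write $n=d+2+j$ with $0\le j\le\lfloor d/2\rfloor+1$.
\begin{itemize}
\item $j=0$: take $m=1$, $c=d+2$, $M=\langle x^0\rangle$. This gives $\partial\Delta^{d+1}$ (up to a ghost vertex), whose graph is complete.
\item $j=1$: take $m=1$, $c=d+2$, $M=\langle x^2\rangle$. By Example~\ref{MuraiWithMequalsOneExample} this is $\partial\Delta^2\ast\partial\Delta^{d-1}$, which has $d+3$ vertices. Both factors have complete graphs because $d-1\ge 2$, so the join has a complete graph.
\item $2\le j\le\lfloor d/2\rfloor+1$: take $m=j$ and $c=(c_1,\ldots,c_m)$ with all $c_i\ge2$ and $|c|=d+2$, for instance $c_i=2$ except $c_1=d+2-2(m-1)$. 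Let $M=\langle x_1^2,\ldots,x_m^2,x_1\cdots x_m\rangle$ as in Example~\ref{MuraiWithCompleteGraphEx}. That example gives a complete graph on $|c|+m=d+2+j$ vertices with no ghosts, provided $m\ge3$, or $m=2$ with $c\ne(2,2)$. The latter holds since $|c|=d+2\ge 5$.
\end{itemize}
This realizes every $n$ in $\{d+2,\ldots,d+3+\lfloor d/2\rfloor\}$ and finishes the proof.
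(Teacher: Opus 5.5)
Your proof is correct. For $d\ge 3$ it is essentially the paper's argument. The constructions are the same: the $m=1$ spheres $\partial\Delta^{d+1}$ and $\partial\Delta^2\ast\partial\Delta^{d-1}$, and Example~\ref{MuraiWithCompleteGraphEx} with $c=(d+4-2m,2,\ldots,2)$. The counting idea is also the same: an index with $c_i=1$ must carry a ghost vertex, so every non-ghost index has $c_i\ge 2$.

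The bookkeeping differs slightly. The paper gets the bound directly: for $c_i=1$ the pair $\{\gen{x}{i}{0},\gen{x}{i}{1}\}$ lies in no facet, so $n\le |c|+\#\{i: c_i\ge 2\}$, and then $2\#\{i: c_i\ge 2\}\le |c|$. You instead compute $n=|c|+m-|V|$ exactly from the ghost-vertex trichotomy following Proposition~\ref{CharacterisationGhostVerticesProp}. That costs you a separate exceptional case, but it gives a cleaner final inequality $2m'\le d+2$ than the paper's somewhat compressed write-up.

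The one genuinely different step is $d=2$. There the paper appeals to the classification of $2$-dimensional Murai spheres (Theorem~\ref{LowDimClassificationThm}). Your Euler count $3n-6=\binom n2$ shows that $\partial\Delta^3$ is the only triangulated $2$-sphere with complete graph at all. This is more elementary and does not depend on the Murai structure.
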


\begin{proof}
For $d=1$, the statement is obvious.
For $d=2$, the statement follows from the classification of 2-dimensional Murai spheres, see Theorem~\ref{LowDimClassificationThm}: a 2-dimensional Murai sphere is 2-connected if and only if it is isomorphic to $\partial\Delta^3$.

Suppose $d\ge 3$. Let us prove the ``only if'' part of the statement. Let $M$ be a $c$-proper multicomplex such that the 1-skeleton of the Murai sphere $\Bier_c(M)$ is the complete graph $K_n$. We may assume that $c_1\ge\ldots\ge c_k>1=c_{k+1}=\ldots =c_m$. By dimension reason, we have $n\ge d+2$. Note that the 1-skeleton of $\Bier_c(M)$ has at least $(m-k)$ ghost vertices, since for every $i\in\{k+1,\ldots,m\}$, by definition, we have $\{\gen{x}{i}{0},\gen{x}{i}{1}\}\not\subset F_c(x^a)$ for all possible $a$. Thus, the complete graph $K_n$ has at most 
\[
(c_1+1)+\cdots+(c_m+1)-(m-k)=|c|+k=d+k+2
\] 
geometric vertices. Observe that $0\leq k\leq m$, hence, for fixed $d$ and $m$, we obtain
\[
n\leq d+m+2\text{ and }n\text{ is maximal possible if and only if }k=m.
\] 
The latter implies $|c|\geq 2m$ and therefore, we finally get $n\leq d+\tfrac 1 2|c|+2=d+\tfrac 1 2(d+2)+2=d+3+\tfrac 1 2d$, hence $n\le d+3+\lfloor \tfrac d 2\rfloor$.

Let us prove the ``if'' part of the statement. Consider the $d$-dimensional Murai sphere $\Bier_c(M)$ defined in Example~\ref{MuraiWithCompleteGraphEx} with 
$$
c=(d+4-2m,2,\ldots,2)\text{ and }m\geq 2.
$$
Since $d+4-2m=|c|-2m+2\geq 2$ and $|c|=d+2\geq 5$, we have $k=m\geq 2$ and the 1-skeleton of $\Bier_c(M)$ is the complete graph $K_{n}$ with $n=d+m+2$ geometric vertices.

This construction shows that for any $m$ with $2\leq m\leq \tfrac 1 2|c|=\tfrac 1 2d+1$ there exists a Murai sphere with the 1-skeleton isomorphic to $K_{d+m+2}$. Finally, the Murai spheres $\Bier(\partial\Delta_{[d+2]})$ and $\Bier_{(d+2)}(\langle x^{2}\rangle)$ have the 1-skeleta isomorphic to $K_{d+2}$ and $K_{d+3}$, respectively. This finishes the proof.
\end{proof}

Here is a criterion for a Murai sphere to have a complete graph as its 1-skeleton.

\begin{proposition}
Let $M$ be a $c$-proper multicomplex and $c\in\N^m$. Then the 1-skeleton of its Murai sphere $\Bier_c(M)$ is not a complete graph if and only if one of the following cases holds.
\begin{itemize}
\item[(a)] there exists $1\leq i\leq m$ such that $(x_i^2)^c\in\max(M)\cup\max(M^\vee)$;
\item[(b)] there exist $1\leq i, j\leq m$ such that $(x_ix_j)^c\in\max(M)\cup\max(M^\vee)$;
\item[(c)] there exists $1\leq i\leq m$ such that $c_i=1$ and $(x_i)^c\notin\max(M)\cup\max(M^\vee)$. 
\end{itemize}
\end{proposition}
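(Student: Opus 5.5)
The plan is to read off the missing edges from the Stanley--Reisner ideal. Two geometric vertices $u,v$ of $\Bier_c(M)$ fail to span an edge exactly when $\{u,v\}\in\min(\Bier_c(M))$, that is, when the squarefree monomial $uv$ is a minimal generator of $I_{SR}(\Bier_c(M))$ of degree exactly $2$. Ghost vertices are the degree-one generators; they are not geometric vertices and are ignored when we speak of the graph. So the statement reduces to deciding when $G(I_{SR}(\Bier_c(M)))$ contains a quadratic monomial. By Theorem~\ref{FaceIdealMuraiThm}, $I_{SR}$ is the sum of three ideals, and every minimal generator of the sum is a minimal generator of one of the three summands. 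I will therefore list the quadratic generators of each summand, and then check which of them survive in the sum, i.e.\ are not divisible by a degree-one generator coming from another summand.

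\emph{First summand.} A generator $\pol(x^b)$ of $\pol(I_c(M))$ has degree $|b|$, so the quadratic ones come from $x^b\in G(I_c(M))=\min(M)$ with $|b|=2$, namely $x^b=x_i^2$ or $x^b=x_ix_j$. By Proposition~\ref{MinimalNonMonmomialsProp}, $x^b\in\min(M)$ if and only if $(x^b)^c\in\max(M^\vee)$. \emph{Second summand.} Symmetrically, the quadratic generators of $\pol^\ast(I_c(M^\vee))$ correspond to $x_i^2$ or $x_ix_j$ in $\min(M^\vee)$, i.e.\ to $(x_i^2)^c$ or $(x_ix_j)^c$ lying in $\max(M)$. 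Together these two summands give exactly the conditions (a) and (b). \emph{Third summand.} Its generator $x_{i,0}\cdots x_{i,c_i}$ is quadratic iff $c_i=1$. It is then a minimal generator of the whole ideal iff neither $x_{i,0}$ nor $x_{i,1}$ is a ghost vertex. By Proposition~\ref{CharacterisationGhostVerticesProp} this means $x_i\in M$ and $(x_i)^c\notin M$. Equivalently $x_i\notin\min(M)$ and $x_i\notin\min(M^\vee)$, which by Proposition~\ref{MinimalNonMonmomialsProp} says $(x_i)^c\notin\max(M^\vee)\cup\max(M)$. This is condition (c).

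The step I expect to need the most care is showing that the quadratic generators from the first two summands genuinely survive in the sum, i.e.\ are not killed by a ghost vertex from the other summand. Take $x_i^2\in\min(M)$, so $\pol(x_i^2)=x_{i,0}x_{i,1}$ and $c_i\ge 2$. Since $x_i\in M$, the variable $x_{i,0}$ is not a ghost. The variable $x_{i,1}$ could only be a ghost in the form $\gen{x}{i}{c_i}$, which would force $c_i=1$, a contradiction. Take $x_ix_j\in\min(M)$ with $i\ne j$. Then $x_i,x_j\in M$, so $x_{i,0}$ and $x_{j,0}$ are not ghosts of type (a). They could only be ghosts of type (b) if $c_i=1$ (respectively $c_j=1$) and $(x_i)^c\in M$. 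In that case Proposition~\ref{CharacterisationGhostVerticesProp} forces $M=\langle (x_i)^c\rangle$. I would then check directly that this contradicts $x_ix_j\in\min(M)$, or else that the resulting graph is still non-complete for another reason covered by (a)--(c). The dual case follows by applying $M\leftrightarrow M^\vee$ and $\Bier_c(M)\cong\Bier_c(M^\vee)$. Finally, since every degree-two minimal generator arises from one of these three sources, the "if and only if" follows.
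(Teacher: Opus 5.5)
Your route is the same as the paper's, whose proof is the single line ``direct application of Proposition~\ref{MinimalNonMonmomialsProp} and Theorem~\ref{FaceIdealMuraiThm}''. Your reduction to quadratic minimal generators of the three summands is correct. So are your translation via $\min(M)=\{(x^a)^c\mid x^a\in\max(M^\vee)\}$, your treatment of $x_i^2\in\min(M)$, and your analysis of the third summand yielding (c). Together these fill in what the paper leaves implicit.

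The one step that is wrong as written is the survival of $x_{i,0}x_{j,0}$ for $x_ix_j\in\min(M)$ with $i\neq j$.
\begin{itemize}
\item A ghost of type (b) is $\gen{x}{i}{c_i}$, and this never coincides with $\gen{x}{i}{0}$, because $c_i\ge 1$. When $c_i=1$ the possible type (b) ghost is $\gen{x}{i}{1}$, which does not divide $x_{i,0}x_{j,0}$. So there is no case to handle. This is the same check you did correctly for $x_i^2$, shifted by one.
\item Your proposed handling would not work anyway. Proposition~\ref{CharacterisationGhostVerticesProp} gives $M=\langle (x_i)^c\rangle$ only when both $\gen{x}{i}{0}$ and $\gen{x}{i}{c_i}$ are ghosts, not from $(x_i)^c\in M$ alone.
\item There is also no contradiction to find. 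Take $c=(1,1)$ and $M=\langle x_1,x_2\rangle$: then $c_1=1$, $(x_1)^c=x_2\in M$ and $x_1x_2\in\min(M)$. Here $\Bier_c(M)\cong S^0$ has facets $\{\gen{x}{1}{0}\}$ and $\{\gen{x}{2}{0}\}$, and its non-edge $\{\gen{x}{1}{0},\gen{x}{2}{0}\}$ is exactly the one your main argument predicts.
\end{itemize}

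Replace that paragraph with the following observation. By Theorem~\ref{FaceIdealMuraiThm}, the degree-one generators of $I_{SR}(\Bier_c(M))$ are precisely $x_{l,0}$ with $x_l\notin M$ and $x_{l,c_l}$ with $x_l\notin M^\vee$; the third summand contributes only degree $c_l+1\ge 2$. Since $x_i$ and $x_j$ properly divide a minimal non-element, both lie in $M$. Since $c_i,c_j\ge 1$, neither $x_{i,0}$ nor $x_{j,0}$ is among these generators. The dual argument covers $x_{i,c_i}x_{j,c_j}$ coming from $\pol^\ast(I_c(M^\vee))$. With this fix the proof is complete.
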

\begin{proof}
Direct application of Proposition~\ref{MinimalNonMonmomialsProp} and Theorem~\ref{FaceIdealMuraiThm}.
\end{proof}

Finally, we are going to obtain an algebraic classification of all neighborly Murai spheres. In what follows, we use the notation $M_r^p:=\langle x_1^{a_1}\cdots x_r^{a_r}\mid a_1+\cdots+a_r=p\rangle$ which means the $c$-multicomplex generated by the set of all monomials $x^a$ such that $|a|=p$.

\begin{theorem}\label{NeighborlyExplicitClassificationThm}
A Murai sphere $\Bier_c(M)$ is neighborly if and only if $M$ or its dual $M^\vee$ is from the following list:
\begin{itemize}
    \item[(a)] $c=(c_1,1)$ and $M=\langle x_1^{c_1}\rangle$;
    \item[(b)] $c=(k,k,1)$ and $M=\langle x_1^kx_2^k\rangle$;
    \item[(c)] $c=(k+1,k,1)$ and $M=\langle x_1^{k+1}x_2^k\rangle$;
    \item[(d)] $c=(2,2,2)$ and $M_3^2\subseteq M\subseteq M_3^3$;
    \item[(e)] $c=(k+2,k)$, $(k+1,k+1)$,  $(k+1,k,1)$, $(k,k,2)$,  $(k,k,1,1)$ and $M_2^k\subseteq M\subseteq M_2^{k+1};$ 
    \item[(f)] $c=(k+1,k)$, $(k,k,1)$ and $M=M_2^k$;
    \item[(g)] $c=(c_1,c_2,\ldots,c_m)$, where $c_1\ge k$, $|c|=2k+1$ or $|c|=2k+2$ and $M=\langle x_1^a\rangle$, where $k\le a\le |c|-k-1$.
\end{itemize}
\end{theorem}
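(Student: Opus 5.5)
We reduce neighborliness to the algebraic criterion of Theorem~\ref{NeighborlySpheresThm} (and the ghost-free criterion just before it), applied with $d=|c|-2$ and $k=\lceil d/2\rceil$. Since $\Bier_c(M)\cong\Bier_c(M^\vee)$, we may replace $M$ by $M^\vee$ whenever convenient. Proposition~\ref{CharacterisationGhostVerticesProp} then splits the argument into three regimes: no ghost vertices; ghost vertices of the form $\gen{x}{i}{0}$ with $i\in V$, the $\gen{x}{i}{c_i}$ case being dual; and the degenerate case $M=\langle (x_i)^c\rangle$ with $c_i=1$. For the ``if'' direction we check each item (a)--(g) against the criterion, which reduces to verifying that every monomial of degree $\le k$, subject to the support restriction on $V$, lies in $M$, and that no complement $(x^a)^c$ with $|a|\le k$ lies in $M$. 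For the ``only if'' direction we show that these two conditions force the list.

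\textbf{Degenerate and ghost cases.} In case (a) of Theorem~\ref{NeighborlySpheresThm} we have $c_i=1$ and $c_j\ge k$ for all $j\ne i$, so $|c|-1\ge (m-1)k$. With $|c|=d+2\le 2k+2$ this leaves $m\le 3$. One gets $m=2$, which is item (a), or $m=3$ with $c=(k,k,1)$ or $(k+1,k,1)$, which is items (b) and (c). When ghost vertices are present with $c_i>1$, Proposition~\ref{CharacterisationGhostVerticesProp} puts all ghosts at level $0$, and $x_i\notin M$ for $i\in V$. The criterion still requires $(x^a)^c\notin M$ for every $|a|\le k$, while every monomial of degree $\le k$ supported outside $V$ lies in $M$. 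Counting $\sum_{i\notin V}c_i\ge k|V^c|$ against $|c|\le 2k+2$ leaves very few shapes. I expect these to collapse to $M=\langle x_1^a\rangle$ with $k\le a\le |c|-k-1$, i.e.\ item (g). The bound on $a$ comes from $x_1^k\in M$ together with $(x_1^{k})^c$ restrictions.

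\textbf{Ghost-free case.} This is the main work. Here $c_i\ge k$ for all $i$ and $|c|\in\{2k+1,2k+2\}$ (for $d$ odd or even respectively, after re-indexing $k$), so $m\le 2$, or $m\le 4$ once coordinates equal to $1$ are allowed. Those coordinates are exactly what produce $(k,k,1)$, $(k,k,1,1)$ and $(k,k,2)$, and they must be handled with the exact neighborliness index. The condition ``$x^a\in M$ for all $|a|\le k$'' gives $M_r^k\subseteq M$. The condition ``$(x^a)^c\notin M$ for all $|a|\le k$'' says that $M$ contains no monomial of degree $\ge |c|-k$. When $|c|=2k+1$ this forces $M=M_2^k$, giving item (f). When $|c|=2k+2$ it forces $M\subseteq M^{k+1}$, giving item (e). Item (d), $c=(2,2,2)$, is the small case $k=2$, $m=3$, $|c|=6$, where $M_3^2\subseteq M\subseteq M_3^3$.

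\textbf{Main obstacle.} The hardest part is the exhaustive enumeration of vectors $c$ with small entries. The index $k=\lceil(|c|-2)/2\rceil$ must be tracked carefully, and entries $c_i=1$ behave like ghost-prone coordinates. For each candidate $c$ I would tabulate it against both inequalities of the criterion, and then use the duality $M\leftrightarrow M^\vee$ to identify cases that are mirror images of each other.
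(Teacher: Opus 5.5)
Your treatment of the degenerate case (both $\gen{x}{i}{0}$ and $\gen{x}{i}{c_i}$ ghosts, giving (a)--(c)) agrees with the paper. So does your reading of the two conditions: every $c$-monomial of degree $\le k$ supported off $V$ lies in $M$, and no $c$-monomial of degree $\ge |c|-k$ lies in $M$. The way you then distribute the cases is wrong, however, so the only-if direction does not close as written. In the ghost-free regime, Lemma~\ref{NeighbornessAndCcomponentsLemma} gives $c_i\ge k\ge 2$ for \emph{every} $i$. So no coordinate can equal $1$, and $(k,k,2)$ can only occur as $(2,2,2)$, which without ghosts is item (d), not (e). Hence $(k+1,k,1)$, $(k,k,2)$, $(k,k,1,1)$ and $(k,k,1)$ cannot come out of your ghost-free analysis. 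They occur only with ghost vertices, the small coordinates being exactly the ghost indices. This contradicts your expectation that the ghost regime collapses to $M=\langle x_1^a\rangle$. For example, $c=(k,k,2)$ with $k\ge 3$ and $M=M_2^k$ has the single ghost $\gen{x}{3}{0}$ (with $c_3=2>1$) and is item (e), not (g). Conversely, (g) with $m=1$, i.e.\ $c=(2k+1)$ or $(2k+2)$, has no ghosts and is missing from your ghost-free list. The failure sits exactly in the enumeration you flagged as the main obstacle.

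The missing idea, which is how the paper organizes the proof, is to split not by whether ghosts exist but by the number of non-ghost indices $|[m]\setminus V|$. First dualize so that all ghosts are $\gen{x}{i}{0}$ with $i\in V$, then use $c_i\ge k$ for $i\notin V$ together with $|c|\le 2k+2$.
If $|[m]\setminus V|\ge 3$, then $3k\le 2k+2$ forces $k=2$, $c=(2,2,2)$ and $V=\emptyset$, which is item (d).
If $[m]\setminus V=\{1,2\}$, then $c_1+c_2\ge 2k$ leaves at most $2$ for the ghost coordinates. This yields exactly the seven vectors in (e) and (f), with $0$, $1$ or $2$ ghosts. Moreover, $x_i\notin M$ for $i\in V$ confines $M$ to $\Bbbk[x_1,x_2]$, so your two degree conditions give $M_2^k\subseteq M\subseteq M_2^{k+1}$, resp.\ $M=M_2^k$.
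If $[m]\setminus V=\{1\}$, then $M=\langle x_1^a\rangle$ with $a\ge k$. The bound $a\le |c|-k-1$ comes from testing $(x^b)^c\notin M$ on vectors $b$ with $b_i=c_i$ for all $i\ge 2$, or from $a\le c_1$ when no such $b$ has $|b|\le k$. It does not come from $(x_1^k)^c$, which never lies in $\langle x_1^a\rangle$ when $m\ge 2$.
Finally, your count (like the paper's three cases) skips $V=[m]$, i.e.\ $M=\langle 1\rangle$. Then $\Bier_c(M)=\partial\Delta^{|c|-1}$ satisfies Theorem~\ref{NeighborlySpheresThm}(b) trivially, yet neither $M$ nor $M^\vee$ is literally on the list; e.g.\ for $c=(5)$, item (g) only allows $a=2$. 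So this case must be set aside explicitly; its sphere is isomorphic to one from item (a).
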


\begin{remark}
In case (g) by Proposition~\ref{MinimalNonMonmomialsProp}, we obtain $I_c(M^\vee)=((x_1^a)^c)$ and $\Bier_c(M)=\partial\Delta^{a}*\partial\Delta^{|c|-a-1}$, which yields $\partial\Delta^{k+1}\ast\partial\Delta^{k}$ and $\partial\Delta^{k+1}\ast\partial\Delta^{k+1}$, and hence case (g) induces the same Murai spheres as examples (b) and (c).
\end{remark}

\begin{proof}
First, consider the case where both $\gen{x}{m}{0}$ and $\gen{x}{m}{1}$ are ghost vertices of a neighborly Murai sphere $\Bier_c(M)$. Due to Proposition~\ref{CharacterisationGhostVerticesProp}, this implies that $c_m=1$ and $M=\langle (x_m)^c\rangle=M^\vee$.
By \Cref{NeighborlySpheresThm}, we have $c_i\ge k$ for all $i<m$, where, by definition of neighborness, 
$$
k=\left\lceil \tfrac 1 2(\dim\Bier_c(M)+1)\right\rceil=\left\lceil \tfrac 1 2(|c|-2)\right\rceil=\left\lceil \tfrac 1 2|c|\right\rceil-1\geq\left\lceil \tfrac 1 2((m-1)k+1)\right\rceil-1.
$$
If $m\geq 4$, the right side is greater than or equal to $\left\lceil \tfrac 1 2(3k-1)\right\rceil\geq \left\lceil \tfrac 1 2(2k+1)\right\rceil=k+1$, a contradiction. Hence $1\leq m\leq 3$. Let us consider each of the three remaining cases.

\begin{itemize}
\item If $m=1$, then $c=(1)$ and $M=\langle 1\rangle$, hence $\Bier_c(M)=\varnothing$;

\item If $m=2$, then $c=(c_1,1)$ and we must have $M=\langle x_1^{c_1}\rangle$ and $c_1\geq \left\lceil \tfrac 1 2(c_1-1)\right\rceil$, which is true, and we get the case (a);

\item If $m=3$, then $c_1=c_2=:k$ or $c_1=k+1$, $c_2=:k$.
Therefore $c=(k,k,1)$ and $M=\langle x_1^kx_2^k\rangle$, so $\Bier_c(M)=\partial\Delta^k*\partial\Delta^k$, or $c=(k+1,k,1)$ and $M=\langle x_1^{k+1}x_2^{k}\rangle$ and $\Bier_c(M)=\partial\Delta^{k+1}*\partial\Delta^k$. Thus, we get the cases (b) and (c).
\end{itemize}

Suppose now that for every $i$, at most one of the vertices of the form $\gen{x}{i}{*}$ is a ghost vertex. By Alexander duality for multicomplexes and Theorem~\ref{NeighborlySpheresThm}, we can assume that there is a (possibly empty) set $V\subset[m]$ such that $\{\gen{x}{i}{0}\mid i\in V\}$ is the set of ghost vertices of a neighborly Murai sphere $\Bier_c(M)$. Let $d=\dim\Bier_c(M)+1$ and let $k=\left\lceil \tfrac 1 2d\right\rceil$, then by \Cref{NeighborlySpheresThm}, we have $c_i\ge k$ for all $i\notin V$. We consider the following three cases:

\underline{Case 1}: There are at least three elements in $[m]\setminus V$; without loss of generality, we may assume $1,2,3\in [m]\setminus V$. Then $d=|c|-2\ge c_1+c_2+c_3-2\ge 3k-2$. Therefore $k=\left\lceil \tfrac1 2d\right\rceil\ge\left\lceil \tfrac 1 2(3k-2)\right\rceil\ge \tfrac 3 2k-1$, so $k\le 2$. Therefore $k=2$, so the dimension of $\Bier_c(M)$ is either $3$ or $4$ and $c_1,c_2,c_3\geq 2$. Therefore, we get $c=(2,2,2)$, and there are no ghost vertices in $\Bier_c(M)$. If $a+b+c\le k=2$ then by \Cref{NeighborlySpheresThm} we have $x_1^ax_2^bx_3^c\in M$ and $x_1^{2-a}x_2^{2-b}x_3^{2-c}\not\in M$, hence $M_3^2\subseteq M\subseteq M_3^3$, hence we get the case (d).

\underline{Case 2}: There exist exactly two elements in $[m]\setminus V$; we may assume that $[m]\setminus V=\{1,2\}$. Since $k=\left\lceil \tfrac 1 2d\right\rceil$ we have $d=2k, 2k-1$ and $2k+2\ge d+2=|c|\ge 2k+1$. Hence, by Theorem~\ref{NeighborlySpheresThm}, for each $k\ge 2$, we have the following possibilities for $c$: 
\begin{itemize}
    \item $(k+2,k)$, $(k+1,k+1)$, $(k+1,k,1)$, $(k,k,2)$, $(k,k,1,1)$, and $M_2^k\subseteq M\subseteq M_2^{k+1}$ (in the first two cases the Murai sphere has no ghost vertices, only one ghost vertex $x_{3,0}$ in the next two cases, and exactly two ghost vertices in the last case), which is the case (e);
    \item $(k+1,k)$, $(k,k,1)$, and $M=M_2^k$ (the Murai sphere has no ghost vertices in the first case and one ghost vertex in the second case), which is the case (f).
\end{itemize}

\underline{Case 3}: There exists exactly one element in $[m]\setminus V$; we may assume that $[m]\setminus V=\{1\}$. Then $|c|=2k+1, 2k+2$ and $c_1\geq k$. By Theorem~\ref{NeighborlySpheresThm} we get $M=\langle x_{1}^a\rangle$, where $k\le a\le |c|-k-1 = k, k+1$, which is the case (g). 
\end{proof}

\section{Simplicial multiwedge construction}

In this section, we discuss a classification of simplicial $(d-1)$-spheres with $d+3$ vertices for $d\geq 2$ by means of applying the simplicial multiwedge operation to the boundaries of cyclic polytopes. In these terms, we prove necessary and sufficient conditions for such a sphere to be neighborly. 

We start by recalling the definition of a cyclic polytope. 

\begin{definition}
Let $n\geq 2, m\geq n+1$ be integers and $T=\{t_1,\ldots,t_m\}$ with $t_1<t_2<\ldots<t_m$ be a set of real numbers. Consider the moment map: 
$$
\mu\colon\R\to\R^n,\,t\mapsto (t,t^2,\ldots,t^n).
$$    
Then a \emph{cyclic polytope} $C(m,n)$ is defined as the convex hull of the set of points $\{\mu(t_1),\mu(t_2),\ldots,\mu(t_m)\}$ in $\R^n$. Its boundary is denoted by $\Delta(m,n)$. 
\end{definition}

It is well-known that $C(m,n)$ is a simplicial $n$-polytope with $m$ vertices $\{\mu(t_i)\mid 1\leq i\leq m\}$ and its combinatorial type does not depend on the choice of the set of distinct points $T$ in $\R^n$. Furthermore, $\Delta(m,n)$ is a neighborly polytopal $(n-1)$-sphere for any $n\geq 2$ and $m\geq n+1$. 

Recall that the \emph{Gale evenness condition} provides a way to determine a facet of a cyclic polytope as follows. A subset $T_{n}\subseteq T$ of cardinality $n$ forms a facet of $C(m,n)$ if and only if any two elements in its complement $T\setminus T_{n}$ are separated by an even number of elements from $T_{n}$ in the sequence  $(t_{1},t_{2},\ldots ,t_{m})$. 

Due to~\cite[Proposition 1]{Mani}, all simplicial $(d-1)$-spheres with $d+3$ vertices are polytopal. Moreover, they can be classified by means of the following simplicial operation, which plays a crucial role in this paper.

\begin{construction}[\cite{BBCG}]
Let $m\geq 2$ and $K$ be a simplicial complex on $[m]$. Suppose $J=(j_1,\ldots,j_m)\in\N^m$ is an $m$-tuple of positive integers. For each monomial 
$$
v:=x_{i_1}\cdots x_{i_p}
$$ 
in the minimal set of generators $G(I_K)$ of the Stanley--Reisner ideal $I_K\subset\Q[m]$ of $K$, where
$$
\Q[m]:=\Q[x_1,\ldots,x_m],
$$
we define the corresponding monomial 
$$
v(J):=x_{i_1,1}\cdots x_{i_1,j_{i_1}}\cdots x_{i_p,1}\cdots x_{i_p,j_{i_p}}
$$
in the polynomial algebra with $|J|$ generators:
$$
\Q[m](J):=\Q[x_{1,1}\ldots,x_{1,j_1},\ldots,x_{m,1},\ldots,x_{m,j_m}].
$$
Then the result of the \emph{simplicial multiwedge} (or, \emph{$J$-construction}) applied to $K$ is the simplicial complex $K(J)$ on $[|J|]$ such that
$$
\Q[K(J)]:=\Q[m](J)/I_K(J), \text{ where }I_{K}(J):=(v(J)\mid v\in G(I_K)).
$$
\end{construction}

\begin{remark}
Observe that for $J=(1,1,\ldots,1)$, the resulting simplicial complex $K(J)$ is isomorphic to $K$. In general, both the dimension and the number of vertices of $K(J)$ increase by $|J|-m$ compared to those of $K$. Therefore, this operation preserves the polytopality of a sphere, the difference $m-n$, and the cardinality of the minimal set of generators $G(I_K)$ of the Stanley--Reisner ideal $I_K$ of $K$. 
\end{remark}

\begin{example}
It follows from the above definition that any product of $r$ simplices is the result of a simplicial multiwedge operation applied to the cube $I^r$ for any $r\geq 1$. In particular, the next isomorphisms hold:
$$
\Delta(n+1,n)(J)=\partial \Delta_{[|J|]} = \Delta(|J|,|J|-1), \text{ for each }J\in\N^{n+1}.
$$    
\end{example}

Now we are ready to state the classification theorem for combinatorial simple polytopes with the number of facets equal $m=n+3$ the way it was formulated in~\cite[Theorem 2.3.48]{Er}.

\begin{theorem}[\cite{Er}]\label{ErokhovetsClassificationThm}
Any simple polytope with $m=n+3, n\geq 2$ is isomorphic to a product of three simplices, or to the result of a simplicial multiwedge operation applied to an even-dimensional dual cyclic polytope $C=C^*(2p+3,2p)$ with $p\geq 1$.
\end{theorem}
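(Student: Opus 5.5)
The natural route is through Gale diagrams, dualizing the statement to simplicial polytopes. Let $P$ be a simple $n$-polytope with $m=n+3$ facets. Its polar $P^*$ is a simplicial $n$-polytope with $n+3$ vertices, so its (linear) Gale transform is a configuration of $n+3$ vectors $g_1,\ldots,g_{n+3}$ in $\R^2$ summing to zero. Here a set $S$ of vertices is a face of $P^*$ if and only if $0$ lies in the relative interior of $\mathrm{conv}\{g_i\mid i\notin S\}$. Both the multiwedge operation and products of simplices should be read off from this $2$-dimensional picture, and I would proceed in four steps.

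\emph{Step 1: normalize the Gale diagram.} Since $P^*$ is a polytope, every open half-plane bounded by a line through $0$ contains at least two of the $g_i$. Since $P^*$ is simplicial, no $g_i$ is $0$: a zero vector would make $P^*$ a pyramid, which is simplicial only in the trivial simplex case, and that case is excluded since $m=n+3$. Also, no two distinct directions occurring among the $g_i$ are antipodal, because otherwise the complement of the two antipodal groups would give a non-simplex face. The face lattice depends only on which open half-planes contain which $g_i$. Hence I can rescale every vector to unit length, merge vectors with equal direction into one direction with a multiplicity, and then move the directions to the vertices of a regular polygon with an odd number $2q+1$ of vertices. The move should preserve the cyclic order of the directions together with their interleaving with the antipodes of the others. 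This is the standard Gale diagram reduction, and it is the step I expect to be the main obstacle. The point to check carefully is that the combinatorial type is an invariant of the cyclic sequence of directions and their antipodes, which alternate, and that alternation forces the number of distinct directions to be odd.

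\emph{Step 2: interpret multiplicities as multiwedges.} Replacing a single Gale vector $g_i$ by $j_i$ copies of it is exactly the Gale-dual effect of the simplicial multiwedge: the minimal non-faces of $K$ are the complements of the minimal sets whose Gale vectors positively span, and duplicating $g_i$ replaces $i$ by the block $\{(i,1),\ldots,(i,j_i)\}$ in each of them. Checking this against the construction of $K(J)$ from $G(I_K)$ shows that $\partial P^*\cong K_0(J)$. Here $K_0$ is the simplicial sphere whose Gale diagram is the reduced configuration with all multiplicities $1$, and $J$ is the vector of multiplicities.

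\emph{Step 3: identify the base spheres.} If $2q+1=3$, the reduced diagram consists of three directions. Their only positive-spanning subsets are the whole triple, so the minimal non-faces of $K_0(J)$ are the three blocks. Hence $K_0(J)$ is a join of three simplex boundaries, dual to a product of three simplices. If $2q+1\ge 5$, write $2q+1=2p+3$ with $p\ge1$. The regular $(2p+3)$-gon is the Gale diagram of the cyclic polytope $C(2p+3,2p)$, which one checks via the Gale evenness condition: the complement of a facet consists of three points alternating around the polygon so that $0$ lies in the interior of their triangle. Thus $K_0=\Delta(2p+3,2p)$, and dualizing back gives $P\cong C^*(2p+3,2p)(J)$, which completes the classification.
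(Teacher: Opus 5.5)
The paper gives no proof of this statement; it is quoted from \cite{Er}, so your argument has to stand on its own. The Gale-diagram strategy is the standard one, but Step 1 contains a false claim, exactly at the point you flagged. After merging only vectors of \emph{equal} direction, the directions need not alternate with their antipodes, and their number need not be odd.

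For a counterexample, take the standard Gale diagram of the octahedron (directions $0^\circ,120^\circ,240^\circ$, each twice) and rotate one of the two vectors at $0^\circ$ to $30^\circ$. No antipode ($180^\circ,210^\circ,300^\circ,60^\circ$) is crossed, and every open half-plane still contains at least two vectors. The inclusion-minimal sets $\{i\mid g_i\in H\}$, for $H$ an open half-plane, are still three disjoint pairs, so this is a Gale diagram of the same octahedron. Yet it has four distinct directions, $0^\circ,30^\circ$ are adjacent with no antipode between them, and $180^\circ,210^\circ$ are adjacent with no direction between them. More generally, a generic diagram has all $n+3$ directions distinct, which is an even number whenever $n$ is odd.

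The missing step is to contract each maximal run of directions not separated by any antipode to a single direction carrying the total multiplicity. This preserves all cofaces. A subset fails to positively span iff some angular gap between cyclically consecutive directions of it is at least $180^\circ$. That status can change only when some $g_i$ passes through some $-g_j$, which never happens while vectors move inside an antipode-free arc. After contraction no two directions are adjacent, and by antipodal symmetry no two antipodes are adjacent, so the pattern alternates. Since $u$ and $-u$ are $r$ places apart among the $2r$ positions, the number $r$ of directions is odd, and $r\ge 3$. Only at this point does your odd-polygon normal form exist.

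There are two smaller slips. In Step 2, the complements of the \emph{minimal} positively spanning sets are the \emph{facets} of $\partial P^*$, not its minimal non-faces. The minimal non-faces are the complements of the \emph{maximal} subsets lying in a closed half-plane, i.e.\ the inclusion-minimal sets $\{i\mid g_i\in H\}$ over open half-planes $H$. With that description your block-replacement claim is correct, because all copies of $g_i$ lie in the same half-planes, and it matches the definition of $K(J)$ via $G(I_K)$.

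In Step 3 you must specify the labeling; with the natural one the identification fails. By Gale evenness, $\{a,a+1,a+2\}$ is the complement of a facet of $C(2p+3,2p)$, but three consecutive vertices of the polygon do not surround $0$. Sending vertex $i$ of the cyclic polytope to polygon vertex $(p+1)i \bmod (2p+3)$ works. It turns an odd cyclic step $2\ell+1$ into a forward step $p+1-\ell\le p+1$. So the Gale evenness condition (all three cyclic steps odd) becomes the condition that all three cyclic steps are at most $p+1$, which says exactly that $0$ lies inside the triangle. The induced permutation of $J$ is harmless.
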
 

Due to the result of~\cite{Sch}, the number of elements in $G(I_\Delta)$ equals $\beta_1(\Q[\Delta])=2p+3$, where $\Delta=\Delta(2p+3,2p)$. In what follows, we will need a refinement of this result: namely, we will be interested in the number of monomials of degree $q\geq 2$ in $G(I_K)$. There is the following description of the Stanley-Reisner ideal for the boundary of an even-dimensional cyclic polytope, see~\cite[Lemma 3.1]{BohmPap}. Suppose $a, r$, and $s$ are positive integers with $r<s$ and $2a\leq s-r+2$. Define the ideal $I_{a,r,s}\subset F[x_{r},\ldots,x_{s}]$ by
\[
I_{a,r,s}:=(x_{t_1}\ldots x_{t_a}\mid r\leq t_1,t_a\leq s,t_{j}+2\leq t_{j+1}\text{ for }1\leq j\leq a-1).
\]
Then the Stanley-Reisner ideal of a dual cyclic polytope is described as follows:
\[
I_{SR}(\Delta(m,d))=(I_{a,1,m-1},I_{a,2,m}),
\]
where $d$ is even, $2\leq d<m-1$, and $a=\tfrac 1 2(d+2)$.

Our goal in this section and in the next one is as follows: for each of the spheres from Theorem~\ref{NeighborlyExplicitClassificationThm} (e), we are going to find a positive integer $p$ and a $(2p+3)$-tuple of positive integers $J$ such that our neighborly Murai sphere is isomorphic to the nerve complex $\Delta(p,J)$ of the simple polytope $C^*(2p+3,2p)(J)$.

We start with the next proposition, which provides a useful criterion for the sphere $\Delta(p,J)$ to be neighborly. In order to state it, we introduce the following notation. 

Suppose $t:=(t_1,t_2,\ldots,t_{p+1})$ is a tuple of positive integers satisfying one of the two conditions: either
\[
1\leq t_1<t_1+2\leq t_2<t_2+2\leq\ldots<t_p+2\leq t_{p+1}\leq 2p+2
\]
or
\[
2\leq t_1<t_1+2\leq t_2<t_2+2\leq\ldots<t_p+2\leq t_{p+1}\leq 2p+3.
\]
In what follows, we call such a $(p+1)$-tuple $t$ \emph{admissible}. For a $(p+1)$-tuple $t$ we define the positive integer vectors 
\[
J(t):=(j_{t_1},\ldots,j_{t_{p+1}})\in\ZZ^{p+1}\text{ and }J(t^c):=(j_k\mid 1\leq k\leq 2p+3,k\neq t_1,t_2,\ldots,t_{p+1})\in\ZZ^{p+2}.
\] 

\begin{proposition}\label{NeighborCriterionProp}
The sphere $\Delta(p,J)$ is neighborly if and only if $|J(t)|+2\geq |J(t^c)|$ for each admissible tuple.
\end{proposition}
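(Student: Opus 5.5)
Let $\Delta=\Delta(2p+3,2p)$ on $[2p+3]$ and $K=\Delta(p,J)$. The plan is to read off the minimal non-faces of $K$ from those of $\Delta$ and then test neighborliness against them.

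\textbf{Minimal non-faces of $K$.} By the description of $I_{SR}(\Delta(m,d))$ recalled above, with $d=2p$, $m=2p+3$ and $a=p+1$, the generators of $I_\Delta$ are the squarefree monomials $x_{t_1}\cdots x_{t_{p+1}}$ with gaps $t_{j+1}\ge t_j+2$. Here either $t_{p+1}\le 2p+2$ or $t_1\ge 2$; these are exactly the admissible tuples. By the $J$-construction, the minimal non-faces of $K$ are therefore the sets
\[
\sigma_t=\{x_{t_i,l}\mid 1\le i\le p+1,\ 1\le l\le j_{t_i}\},
\]
with $|\sigma_t|=|J(t)|$, for $t$ ranging over admissible tuples. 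In particular $K$ has no ghost vertices, since every generator has degree $p+1\ge 2$.

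\textbf{Dimension and reduction.} $K$ is a sphere of dimension $|J|-4$, so it is neighborly iff every vertex set of size at most $\lceil (|J|-4)/2\rceil$ is a face. A set is a face iff it contains no $\sigma_t$. Hence $K$ is neighborly iff $|J(t)|>\lceil(|J|-4)/2\rceil$ for all admissible $t$. Using $|J|=|J(t)|+|J(t^c)|$, this reads
\[
|J(t)|\ge \left\lceil\tfrac{|J(t)|+|J(t^c)|-4}{2}\right\rceil+1 .
\]

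\textbf{Arithmetic step.} Set $A=|J(t)|$ and $B=|J(t^c)|$, and check that $A\ge\lceil (A+B-4)/2\rceil+1$ is equivalent to $A+2\ge B$.
\begin{itemize}
\item If $A+B$ is even, the inequality is $A\ge (A+B-2)/2$, i.e.\ $A+2\ge B$.
\item If $A+B$ is odd, it is $A\ge (A+B-1)/2$, i.e.\ $A+1\ge B$. Since $A+B$ is odd, $A+2\ge B$ already forces $B\ne A+2$, so $A+1\ge B$ and the two conditions agree.
\end{itemize}
This gives the stated criterion.

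\textbf{Main obstacle.} The delicate point is justifying that the minimal non-faces of $\Delta$ are exactly the sets indexed by admissible tuples, with no other generators. This follows from the cited Stanley--Reisner description (\cite{BohmPap}), or alternatively from Gale evenness. If a self-contained argument is needed instead, the plan is:
\begin{itemize}
\item A $(p+1)$-subset avoiding cyclically adjacent pairs cannot lie in any facet, since a facet's complement consists of 3 points and its $2p$ points form even blocks.
\item Any set of size at most $p$ lies in a facet.
\item A set of size at least $p+1$ that is not of this form lies in a facet.
\end{itemize}
Together these three facts recover the list of minimal non-faces.
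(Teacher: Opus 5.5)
Your proof is correct. Its skeleton matches the paper's: find the sizes of the minimal non-faces of $\Delta(p,J)$, compare them with the threshold $\lceil(|J|-4)/2\rceil=\lceil|J|/2\rceil-2$, and finish with the same arithmetic. Your parity split is an unpacked version of the paper's chain $|J(t)|+2>\lceil|J|/2\rceil\Leftrightarrow|J(t)|+2\ge\tfrac12(|J|+2)$.

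The difference is how you obtain the minimal non-faces of the multiwedge. The paper goes through homological algebra. Hochster's formula identifies the number of minimal non-faces of size $q$ with $\beta^{-1,2q}$, and Ayzenberg's identity expresses the bigraded Betti numbers of $K(J)$ through the multigraded Betti numbers of $K$. You read the minimal non-faces directly off the definition of the $J$-construction, which is more elementary and fully sufficient.

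The one step you should make explicit is that the $v(J)$, for $v\in G(I_\Delta)$, are the \emph{minimal} generators of $I_\Delta(J)$. This holds because $v(J)\mid w(J)$ if and only if $v\mid w$: the supports are unions of whole blocks. Hence the $v(J)$ inherit the antichain property of $G(I_\Delta)$. Your remark that there are no ghost vertices (every $|\sigma_t|\ge p+1\ge 2$) is a point the paper leaves implicit.

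In your optional self-contained fallback, the third bullet is misstated. Taken literally it is false: a $(2p+1)$-subset of $[2p+3]$ is ``not of this form'' yet lies in no facet. What you actually need is that every vertex set containing no $(p+1)$-subset of pairwise cyclically non-adjacent elements extends to a union of $p$ disjoint cyclically adjacent pairs. That requires a real argument, for instance covering each maximal arc of length $\ell$ by $\lceil\ell/2\rceil$ pairs and filling the remainder, which you do not give. Your main line, like the paper's, cites the B\"ohm--Papadakis description of $I_{SR}(\Delta(2p+3,2p))$, so this does not affect the proof.
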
 
\begin{proof}
First, note that the simplicial multiwedge construction preserves the difference between the number of vertices and the dimension of a simplicial complex. Since $\Delta(2p+3,2p)=\Delta(p,(1,1,\ldots,1))$, it follows that 
$$
\dim(\Delta(p,J))=\dim(\Delta(2p+3,2p))+|J|-(2p+3)=|J|-4.
$$
Hence, $\Delta(p,J)$ is neighborly if and only if each set $F$ of its vertices with $|F|\leq\left\lceil\frac{|J|}{2}\right\rceil-2$ forms a face. 

It follows from the Hochster theorem~\cite{Hoh} that for any simplicial complex $K$, the number of minimal non-faces of cardinality $q$ equals the bigraded Betti number 
$$
\beta^{-1,2q}(K):=\beta^{-1,2q}(\Q[K])=\sum\limits_{I\subseteq [m], |I|=q}\dim_{\Q}\tilde{H}^{q-2}(K_I).
$$
On the other hand, by~\cite[Corollary 7.9]{Ayz13}, we have an identity for the bigraded Betti numbers of $\Delta(p,J)$:
$$
\beta^{-1,2q}(\Delta(p,J))=\sum\beta^{-1,2\bf{a}}(\Delta(2p+3,2p)) \text{ for all vectors }{\bf{a}}\in\{0,1\}^{2p+3}\text{ such that }a_1j_1+\ldots+a_{2p+3}j_{2p+3}=q.
$$

Moreover, for any simplicial complex $K$, the multigraded Betti number $\beta^{-1,2\bf{a}}(K)$ equals $1$, if $\bf{a}$ represents a minimal non-face of $K$, and equals $0$, otherwise. Therefore, $\Delta(p,J)$ is a neighborly sphere if and only if 
\[
\beta^{-1,2q}(\Delta(p,J))=0\text{ for all }q\leq \left\lceil\tfrac 1 2|J|\right\rceil-2,
\]
which is equivalent to $|J(a)|=a_{1}j_1+\ldots+a_{2p+3}j_{2p+3}>\left\lceil\tfrac1 2|J|\right\rceil-2$ for all ${\bf{a}}\in\min(\Delta(2p+3,2p))$. On the other hand, the minimal non-faces are described via the Stanley-Reisner ideal generators as 
$$
I_{SR}(\Delta(2p+3,2p))=(x_t\mid t\text{ is admissible}).
$$

It remains to observe that $|J|=|J(t)|+|J(t^c)|$ for each admissible tuple $t\in\N^{p+1}$, and hence $|J(t)|+2>\left\lceil\tfrac1 2|J|\right\rceil$ if and only if $|J(t)|+2\geq \tfrac 1 2(|J|+2)$ if and only if $|J(t)|+2\geq |J(t^c)|$ for each admissible tuple $t\in\N^{p+1}$. This finishes the proof. 
\end{proof}

\begin{lemma}\label{OneTwoVectorLemma}
Suppose $p\in \N$ and $J\in\N^{2p+3}$. If $\Delta(p,J)$ is neighborly, then $J\in\{1,2\}^{2p+3}$.  
\end{lemma}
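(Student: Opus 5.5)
The plan is to feed Proposition~\ref{NeighborCriterionProp} two cleverly chosen admissible tuples whose inequalities, when added, isolate a single component $j_s$. We work with indices of $J$ modulo $2p+3$, that is, on the cycle $C_{2p+3}$.

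\emph{Step 1: identify the admissible tuples.} A $(p+1)$-tuple is admissible exactly when it is a set of $p+1$ pairwise cyclically non-adjacent indices on $C_{2p+3}$. The first defining condition of admissibility is the case where $2p+3$ is not used. The second is the case where $1$ is not used. In both cases consecutive chosen indices differ by at least $2$, so together they exclude precisely the forbidden pair $\{1,2p+3\}$.

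Such a $(p+1)$-set in a cycle of length $2p+3$ is determined by where its single ``gap of length $2$'' sits. So the admissible tuples are the $2p+3$ rotations of $\{1,3,\ldots,2p+1\}$. The complement of each one consists of one cyclically adjacent pair together with $p$ alternating singletons.

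\emph{Step 2: choose two tuples around $s$.} Fix an index $s$, suppose $\Delta(p,J)$ is neighborly, and consider
\[
t=\{s+2,s+4,\ldots,s+2p+2\},\qquad t'=\{s+1,s+3,\ldots,s+2p+1\},
\]
with all indices taken mod $2p+3$. Both are admissible by Step 1. Their complements are
\[
t^c=\{s\}\cup t',\qquad t'^c=\{s\}\cup t,
\]
where for $t'^c$ we use that $s-1\equiv s+2p+2$. Write $A=|J(t')|$ and $B=|J(t)|$.

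\emph{Step 3: add the two inequalities.} Proposition~\ref{NeighborCriterionProp} applied to $t$ gives $B+2\ge j_s+A$. Applied to $t'$, it gives $A+2\ge j_s+B$. Adding these yields $A+B+4\ge 2j_s+A+B$, hence $j_s\le 2$. Since $s$ was arbitrary and every $j_s\ge 1$, we conclude $J\in\{1,2\}^{2p+3}$.

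\emph{Main obstacle.} The only delicate point is Step 1, namely checking that the paper's linear description of admissibility really coincides with cyclic independence. In particular, both tuples $t$ and $t'$ must be admissible even when the indices wrap around $2p+3$. This is a direct check against the two chains of inequalities in the definition.
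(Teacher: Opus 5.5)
Your proof is correct and is essentially the paper's argument: for $s=2k+1$ your $t'$ and $t$ are exactly the paper's admissible tuples $(1,3,\ldots,2k-1,2k+2,\ldots,2p+2)$ and $(2,4,\ldots,2k,2k+3,\ldots,2p+3)$, and adding the two inequalities from Proposition~\ref{NeighborCriterionProp} gives $j_s\le 2$. Your cyclic indexing simply handles odd and even $s$ in one stroke, where the paper treats them as two separate cases.
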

\begin{proof}
Let us use the previous criterion for neighborness of the sphere $\Delta(p,J)$.
Firstly, we show that $j_{2k+1}\leq 2$ for any $0\leq k\leq p+1$. Consider the admissible tuple $t=(1,3,\ldots,2k-1,2k+2,2k+4,\ldots,2p+2)$. It yields the following inequality:
$$
j_1+j_3+\ldots+j_{2k-1}+j_{2k+2}+\ldots+j_{2p+2}+2\geq j_2+j_4+\ldots+j_{2k}+j_{2k+1}+\ldots+j_{2p+3}.
$$
Then consider the admissible tuple $t=(2,4,\ldots,2k,2k+3,2k+5,\ldots,2p+3)$. It yields the following inequality:
$$
j_2+j_4+\ldots+j_{2k}+j_{2k+3}+\ldots+j_{2p+3}+2\geq j_1+j_3+\ldots+j_{2k+1}+j_{2k+2}+\ldots+j_{2p+2}.
$$
Adding these two inequalities gives $|J|-j_{2k+1}+4\geq |J|+j_{2k+1}$, which implies the desired inequality $j_{2k+1}\leq 2$.

Finally, we show that $j_{2k+2}\leq 2$ for any $0\leq k\leq p$. Consider the admissible tuple $t=(2,4,\ldots,2k,2k+3,2k+5,\ldots,2p+3)$. It yields the following inequality:
$$
j_2+j_4+\ldots+j_{2k}+j_{2k+3}+\ldots+j_{2p+3}+2\geq j_1+j_3+\ldots+j_{2k+1}+j_{2k+2}+\ldots+j_{2p+2}.
$$
Then consider the admissible tuple $t=(1,3,\ldots,2k+1,2k+4,2k+6,\ldots,2p+2)$. It yields the following inequality:
$$
j_1+j_3+\ldots+j_{2k+1}+j_{2k+4}+\ldots+j_{2p+2}+2\geq j_2+j_4+\ldots+j_{2k}+j_{2k+2}+j_{2k+3}+\ldots+j_{2p+3}.
$$
Adding these two inequalities gives $|J|-j_{2k+2}+4\geq |J|+j_{2k+2}$, which implies the desired inequality $j_{2k+2}\leq 2$.
\end{proof}

\begin{remark}
The previous statement shows that for each positive integer $p$ there are only finitely many $(2p+3)$-tuples of positive integers $J$ such that the sphere $\Delta(p,J)$ is neighborly. It also immediately implies that the spheres $\Delta(p,(1,\ldots,1,2,1,\ldots,1))$ and $\Delta(p,(2,\ldots,2))$ are neighborly for all $p\geq 1$.
\end{remark}

\begin{lemma}\label{NumThLemma}
Let $p\geq 0$ and $S$ be a sequence of ‘0's and ‘1's of length $2p+1$ viewed in a cyclic order. Then the following conditions are equivalent:
\begin{itemize}
\item ``Inequality condition'': inserting one extra ‘1' between any two consecutive elements and considering this position as even makes the sum of elements of $S\cup\{1\}$ in even positions be greater or equal to the one in odd positions;
\item ``Evenness condition'': between any two consecutive ‘1's in $S$ there is an even number of ‘0's.
\end{itemize}
\end{lemma}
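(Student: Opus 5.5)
The plan is to turn the ``Inequality condition'' into a statement about a single integer attached to each gap of the cyclic sequence, and then follow how this integer changes as the gap moves once around the cycle. Write $S=(s_1,\ldots,s_{2p+1})$ with indices taken modulo $2p+1$. Suppose the extra `1' is inserted in the gap just after $s_i$ and is given position $0$ (even). Then $s_{i+r}$ sits at position $r$ for $1\le r\le 2p+1$. The total length $2p+2$ is even, so the parity labelling is consistent around the cycle. Put
\[
D_i:=\sum_{r=1}^{2p+1}(-1)^{r+1}s_{i+r},
\]
which is the sum over odd positions minus the sum over even positions, not counting the inserted `1'. The Inequality condition then says exactly that $D_i\le 1$ for every $i$.

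The key step is a one-step recursion for $D_i$. Passing from gap $i$ to gap $i+1$, every element $s_{i+2},\ldots,s_{i+2p+1}$ moves down one position, so its sign flips. The element $s_{i+1}$ moves from position $1$ to position $2p+1$, and both positions are odd. Hence
\[
D_{i+1}=2s_{i+1}-D_i.
\]
So crossing a `0' acts on $D$ by $x\mapsto -x$, and crossing a `1' acts by $x\mapsto 2-x$. Note also that $D_i\equiv N \pmod 2$, where $N$ is the number of `1's in $S$. If $N=0$, then all $D_i=0$ and both conditions hold trivially, so from now on assume $N\ge1$.

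For Inequality $\Rightarrow$ Evenness, let $y$ be the value of $D$ at the gap just before some `1'. The value at the next gap is $2-y$. Requiring both $y\le1$ and $2-y\le1$ forces $y=1$, so $D=1$ both immediately before and immediately after every `1'. Along a run of $z$ zeros that follows a `1', the values alternate $1,-1,1,\ldots$, and the value at the end of the run is $(-1)^z$. This must equal $1$, since the run ends just before the next `1'. Therefore $z$ is even.

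For Evenness $\Rightarrow$ Inequality, I first observe that $N$ is odd. The zeros fall into runs of even length, so the number of zeros is even, and it equals $2p+1-N$. Now go once around the cycle starting from the gap just before some `1'. Each pair of zeros acts as the identity, and each `1' acts as $x\mapsto 2-x$. So the full circuit acts as $x\mapsto 2-x$ applied $N$ times, which is $x\mapsto 2-x$ itself because $N$ is odd. Since $D$ is a genuine function on the gaps, its value at the starting gap must be a fixed point of this map, and the only fixed point is $1$. Propagating around the cycle, $D$ takes only the values $1$ and $-1$: it equals $1$ at the gaps adjacent to each `1' and alternates inside each even run of zeros. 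Hence $D_i\le1$ for all $i$.

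The main point needing care is this closing-up argument, namely the use of the parity of $N$ together with the fixed point. Everything else is direct bookkeeping with the recursion $D_{i+1}=2s_{i+1}-D_i$.
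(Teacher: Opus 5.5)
Your proof is correct, and it takes a different route from the paper's.

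The paper argues by induction on $p$, with base cases $p=0,1$.
\begin{itemize}
\item For Evenness $\Rightarrow$ Inequality, it deletes two adjacent zeros and applies the inductive hypothesis to the shorter sequence. It then recovers the inequality at the one new gap, the one between the deleted zeros, by adding the inequalities at two nearby gaps.
\item For Inequality $\Rightarrow$ Evenness, it deletes two adjacent equal symbols. When these are ones, it must exclude by hand the configurations in which the deleted pair separated two odd runs of zeros, which merge into an even run in the shorter sequence. It does this again by choosing specific positions for the extra one and adding inequalities.
\end{itemize}

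Your recursion $D_{i+1}=2s_{i+1}-D_i$ packages all of these manipulations into one local rule. The linear relations the paper obtains by adding inequalities are consequences of it. The fact that deleting two equal adjacent symbols leaves the values at the other gaps unchanged is just the statement that the corresponding composite map is the identity.

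What you gain is a short, non-inductive proof with no case analysis, plus sharper information. When $S$ contains at least one $1$, either condition forces $D_i\in\{1,-1\}$ at every gap, with $D_i=1$ (so the inequality is an equality) at every gap adjacent to a one. The paper's induction stays closer to the combinatorial manipulation of the sequence itself, but pays for it with separate base cases and a somewhat delicate lifting step.
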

\begin{proof}
Induction on $p$. Base case: $p=0$. If $S=\{0\}$, then Inequality condition holds as $1\geq 0$ and Evenness condition holds as there are no ‘1's in $S$. If $S=\{1\}$, then Inequality condition holds as $1\geq 1$ and Evenness condition holds as there are no ‘0's in $S$ and zero is an even number. If $p=1$, the only triple $S$ not satisfying the Evenness condition has one ‘0' and two ‘1's and is the only triple not satisfying the Inequality condition.

Suppose the statement holds when the length of a sequence is less or equal to $2p+1$ and let $S$ be a sequence of ‘0's and ‘1's of length $2p+3$ and $p\geq 1$.

Assume that the Evenness condition holds for $S$. If there are no ‘0's in $S$, then each inequality in the Inequality condition turns into equality $p+2=p+2$, which is true. If there are ‘0's
in $S$, then Evenness condition implies there exist two consecutive ‘0's in $S$. Observe that deleting them from $S$ results in a sequence $S'$ of ‘0's and ‘1's of length $2p+1$ satisfying the Evenness condition. Hence, by the inductive assumption, $S'$ satisfies the Inequality condition. It is easy to see that it follows $S$ also satisfies the Inequality condition: indeed, if the extra ‘1' is not between the two consecutive ‘0's that we deleted from $S$, then those zeros are on both sides of the corresponding inequality and it still holds; if the extra ‘1' is between those two consecutive ‘0's, then from the inductive assumption for the sequence $\{s_1,\ldots,s_{2p+1}\}$ ($s_0=s_{2p+2}=0$) we get:
$$
1+s_3+s_5+\ldots+s_{2p+1}\geq s_1+s_2+s_4+\ldots+s_{2p}
$$
and 
$$
1+s_1+s_3+\ldots+s_{2p-1}\geq s_{2p+1}+s_2+s_4+\ldots+s_{2p},
$$
while we must show that
$$
1+s_1+s_3+\ldots+s_{2p+1}\geq s_{2}+s_4+\ldots+s_{2p}.
$$
Therefore, adding the first two inequalities together yields:
$$
1+s_3+s_{5}+\ldots+s_{2p-1}\geq s_2+s_4+\ldots+s_{2p},
$$
which implies the desired inequality, since $s_1, s_{2p+1}\geq 0$.

Assume that the Inequality condition holds for $S$. Then either there are two consecutive ‘0's, or two consecutive ‘1's in $S$: otherwise, $S$ has the form $\{0,1,0,1,0,\ldots\}$ and does not satisfy the Inequality condition: $1=1+0+0+\ldots+0 < 1+\ldots+1=p+1$ or $p+2$.
If there are two consecutive ‘0's in $S$, we delete them and get the sequence $S'$ of ‘0's and ‘1's, which satisfies the Inequality condition. Therefore, by the inductive assumption, $S'$ satisfies the Evenness condition and hence $S$ also satisfies the Evenness condition. If there are two consecutive ‘1's in $S$, we delete them and get the sequence $S'$ of ‘0's and ‘1's, which satisfies the Inequality condition. Therefore, by the inductive assumption, $S'$ satisfies the Evenness condition. Hence either $S$ also satisfies the Evenness condition, or there are two intervals of ‘0's in $S$, both having odd lengths and separated by two consecutive ‘1's. This yields either a circle
$$
\{1,0,\ldots,0,1,1,0,\ldots,0\},
$$
or a circle 
$$
\{1,0,\ldots,0,1,1,0,\ldots,0,1,S''\},
$$
where $S''$ has odd length and satisfies the Evenness condition. 

In the first case, we place the extra ‘1' between our two separating ‘1's and get the inequality $1=1+0+\ldots+0\geq 1+1+1+0+\ldots+0=3$, which is false.

In the second case, denote by $S_1$ and $S_2$ the sums of the elements of $S''$ in odd and even positions, respectively. Since $S$ satisfies the Inequality condition, we get the following two inequalities:
$$
1+S_1\geq S_2+4 \quad\text{(placing the extra ‘1' between our two separating ‘1's)
}
$$
and
$$
1+S_2+2\geq S_1+2\quad\text{(placing the extra ‘1' between $S''$ and $1$).}
$$
Adding these inequalities together yields $4\geq 6$, which is false. Thus, this case is impossible and $S$ satisfies the Evenness condition, which finishes the proof.
\end{proof}

In what follows we will need the next simple observation.

\begin{proposition}\label{NeighborlyProductsOfSimplicesProp}
Suppose a triangulated sphere is isomorphic to a join of boundaries of simplices:
$$
S\cong \partial\Delta^{n_1}\ast\ldots\ast\partial\Delta^{n_k},\text{ where } n_1\geq n_2\geq\ldots\geq n_k\geq 1\text{ and }k\geq 1.
$$
Then $S$ is neighborly if and only if one of the following conditions holds:
\begin{itemize}
\item $k=1$;
\item $k=2$ and $n_1-1\leq n_2\leq n_1$;
\item $k=3$ and $n_1=n_2=n_3=1$.
\end{itemize}
In particular, every neighborly join of boundaries of simplices is isomorphic to a Murai sphere. 
\end{proposition}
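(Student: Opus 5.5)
The plan is to read off the minimal non-faces of the join, compare their sizes with the neighborliness threshold, and then exhibit each surviving join as one of the Murai spheres from Example~\ref{MuraiWithMequalsOneExample} or Theorem~\ref{NeighborlyExplicitClassificationThm}.

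\textbf{Minimal non-faces and the neighborliness inequality.} The minimal non-faces of $S=\partial\Delta^{n_1}\ast\cdots\ast\partial\Delta^{n_k}$ are exactly the full vertex sets of the factors. Hence $S$ has minimal non-faces of sizes $n_1+1,\dots,n_k+1$. Its dimension is
$$
d=\sum_{i=1}^k n_i-1,
$$
so $S$ is neighborly if and only if every set of at most $\lceil d/2\rceil$ vertices is a face. Since the smallest minimal non-face has size $n_k+1$, this is equivalent to the single inequality
$$
n_k+1>\left\lceil \tfrac12\Big(\sum_i n_i-1\Big)\right\rceil .
$$

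\textbf{Case analysis on $k$.} For $k=1$ the sphere is the boundary of a simplex, which is trivially neighborly. For $k\ge 2$, write $N=\sum_i n_i$. The inequality says $n_k\ge\lceil (N-1)/2\rceil$. Since $n_k\le n_i$ for every $i$, we have $N\ge kn_k$.

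For $k=2$ the condition becomes $n_2\ge\lceil (n_1+n_2-1)/2\rceil$. This is equivalent to $2n_2\ge n_1+n_2-1$, i.e.\ $n_1-1\le n_2$, and $n_2\le n_1$ holds by the ordering.

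For $k\ge 3$ the condition gives $2n_k\ge N-1\ge kn_k-1$, hence $(k-2)n_k\le 1$. This forces $k=3$ and $n_3=1$. Then $2\ge n_1+n_2$, so $n_1=n_2=1$, and we get the octahedron. Conversely, the octahedron is $1$-neighborly, since $d=2$ and any single vertex is a face.

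\textbf{Realization as Murai spheres.} Example~\ref{MuraiWithMequalsOneExample} gives $\Bier_c(\langle x^a\rangle)=\partial\Delta^a\ast\partial\Delta^{c-a-1}$. Taking $a=n_1$ and $c=n_1+n_2+1$ realizes every join with $k=2$. The same example with $a=0$ realizes $k=1$. Alternatively, Theorem~\ref{NeighborlyExplicitClassificationThm}(a) realizes $k=1$ with $c=(c_1,1)$, which gives $\partial\Delta^{c_1}$ plus ghost vertices.

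For the octahedron, I would use the classical Bier sphere on two points, $\Bier(\{\varnothing\})$ on $[2]$. Its facets are the complements of $\{i\}\sqcup\{j'\}$ with $\{i,j\}$ allowed, which gives $\partial\Delta^1\ast\partial\Delta^1$, a circle, so that is not it. Instead, the octahedron should be taken as $\Bier(K)$ with $K$ the discrete complex on $[3]$. Alternatively, it is the Murai sphere from Theorem~\ref{NeighborlyExplicitClassificationThm}(b) with $k=1$, namely $\partial\Delta^1\ast\partial\Delta^1$ together with ghost vertices. That is not enough, since we need three factors, so I would check directly that $\Bier(\langle x_1,x_2,x_3\rangle)$ on $[3]$, the $4$-gon/octahedron Bier sphere of dimension $1$, does the job. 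More simply, the known fact that the octahedron is the Bier sphere $\Bier(\partial\Delta^1\sqcup\{3\})$ should be verified by listing its minimal non-faces, $\{1,1'\},\{2,2'\},\{3,3'\}$-type pairs.

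\textbf{Main obstacle.} This explicit identification of the octahedron as a Bier, hence Murai, sphere is the only step needing real care; the rest is arithmetic.
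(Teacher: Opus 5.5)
Your proof of the equivalence is correct and is the paper's argument. The minimal non-faces are the vertex sets of the factors, so neighborliness reduces to $n_k\ge\lceil d/2\rceil$, and $2n_k\ge d\ge kn_k-1$ forces $k\le 3$. Example~\ref{MuraiWithMequalsOneExample} also correctly realizes the cases $k=1,2$ as Murai spheres.

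The gap is the octahedron, the step you flagged yourself. Every candidate you list is a Bier sphere on $[2]$ or $[3]$. A Bier sphere on $[m]$ has dimension $m-2\le 1$, so none of them can be the $2$-dimensional octahedron.

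\begin{itemize}
\item $\Bier(\{\varnothing\})$ on $[2]$ is $\partial\Delta^1$, i.e.\ two points, not a circle.
\item $\partial\Delta^1\sqcup\{3\}$ is simply the discrete complex on $[3]$. This is the same as $\langle x_1,x_2,x_3\rangle$ for $c=(1,1,1)$, so two of your candidates coincide. Its Bier sphere is the hexagon with cyclically ordered vertices $1,2',3,1',2,3'$.
\item In that hexagon, $\{1,2\}$ is also a minimal non-face. So the pairs $\{i,i'\}$ you intended to check are not its full list of minimal non-faces.
\end{itemize}

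As written, the ``in particular'' claim is therefore unproved for $k=3$.

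The fix is to pass to $m=4$. Take $c=(1,1,1,1)$ and $M=\langle x_1x_2x_3\rangle$, i.e.\ the Bier sphere of the full triangle on $\{1,2,3\}$ regarded as a complex on $[4]$. This is the realization the paper itself uses in Section 5.

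\begin{itemize}
\item Here $M=\langle (x_4)^c\rangle=M^\vee$ with $c_4=1$. By Proposition~\ref{CharacterisationGhostVerticesProp}, both $x_{4,0}$ and $x_{4,1}$ are ghost vertices.
\item Since $I_c(M)=I_c(M^\vee)=(x_4)$, Theorem~\ref{FaceIdealMuraiThm} shows that the remaining minimal non-faces are exactly $\{x_{i,0},x_{i,1}\}$ for $i=1,2,3$.
\end{itemize}

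Hence $\Bier_{(1,1,1,1)}(\langle x_1x_2x_3\rangle)\cong\partial\Delta^1\ast\partial\Delta^1\ast\partial\Delta^1$, which completes the argument.
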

\begin{proof}
By definition of join, the dimension of $S$ equals $d:=(n_1-1)+\ldots+(n_k-1)+(k-1)=n_1+\ldots+n_k-1\geq kn_k-1$. By definition of neighborness, $S$ is neighborly if and only if $n_k+1>\left\lceil \tfrac 1 2d\right\rceil$, which is equivalent to $n_k\geq \tfrac 1 2d$.
It follows that $2n_k\geq d\geq kn_k-1$, which implies that $k\le 3$.

When $k=1$, we obtain an arbitrary simplex $S\cong\partial\Delta^{n_1}$. When $k=2$, neighborness of $S\cong\partial\Delta^{n_1}\ast\partial\Delta^{n_2}$ is equivalent to $2n_2\geq n_1+n_2-1$, or $n_1-1\leq n_2\leq n_1$. When $k=3$, neighborness of $S\cong\partial\Delta^{n_1}\ast\partial\Delta^{n_2}\ast\partial\Delta^{1}$ is equivalent to $2=2n_3\geq n_1+n_2+1-1$, which is equivalent to $n_1=n_2=n_3=1$.
\end{proof}

From now on, by a vector $J\in\N^m, m\geq 2$, satisfying the \emph{Evenness condition}, we mean a $\{1,2\}$-vector such that between any two consecutive '2's there is an even number of '1's, when the components of $J$ are viewed in cyclic order.

\begin{theorem}\label{NeigborlyJ}
Let $d\geq 2$. Then $K$ is a neighborly $(d-1)$-sphere with $d+3$ vertices if and only if one of the following two conditions holds:
\begin{itemize}
\item $K\cong\partial\Delta^1\ast\partial\Delta^1\ast\partial\Delta^1$;
\item there exist $p\in\N$ and $J\in\N^{2p+3}$ with $|J|=d+3$ satisfying the Evenness condition such that 
$K$ is isomorphic to $\Delta(p,J)$.
\end{itemize}
\end{theorem}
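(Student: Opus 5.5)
We combine the classification of spheres with few vertices with the neighborliness criteria proved above. Let $K$ be a simplicial $(d-1)$-sphere with $d+3$ vertices. By \cite{Mani} it is polytopal, so its dual is a simple $d$-polytope with $d+3$ facets. By Theorem~\ref{ErokhovetsClassificationThm}, $K$ is then isomorphic either to a join of three boundaries of simplices $\partial\Delta^{n_1}\ast\partial\Delta^{n_2}\ast\partial\Delta^{n_3}$, or to $\Delta(p,J)$ for some $p\geq 1$ and $J\in\N^{2p+3}$. In the second case the vertex count gives $|J|=d+3$, because the multiwedge adds $|J|-(2p+3)$ vertices to $\Delta(2p+3,2p)$. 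We treat the two alternatives separately.

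\emph{Join case.} By Proposition~\ref{NeighborlyProductsOfSimplicesProp} with $k=3$, such a join is neighborly if and only if $n_1=n_2=n_3=1$. This gives exactly the octahedral sphere $\partial\Delta^1\ast\partial\Delta^1\ast\partial\Delta^1$, which has $d=3$ and $6=d+3$ vertices. Conversely, the octahedron is neighborly, since its graph is $2$-neighborly only in the weak sense needed: $\lceil 2/2\rceil=1$.

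\emph{Multiwedge case.} First, if $\Delta(p,J)$ is neighborly, then Lemma~\ref{OneTwoVectorLemma} gives $J\in\{1,2\}^{2p+3}$. We then rewrite the criterion of Proposition~\ref{NeighborCriterionProp} in the language of Lemma~\ref{NumThLemma}. Set $s_i:=j_i-1\in\{0,1\}$. Since an admissible $t$ has $p+1$ entries and $t^c$ has $p+2$ entries, the inequality $|J(t)|+2\geq|J(t^c)|$ becomes
\[
\textstyle\sum_{i\in t}s_i+1\geq\sum_{i\notin t}s_i .
\]

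The key step is to identify the admissible tuples with the placements in the Inequality condition. Admissible tuples are exactly the sets of $p+1$ pairwise cyclically non-adjacent positions in the $(2p+3)$-cycle. For such a set, exactly one gap between consecutive chosen positions contains two unchosen elements; every other gap contains one. So the complement $t^c$ consists of one adjacent pair, with the remaining elements alternating with $t$.

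\begin{itemize}
\item The cyclic sequence has length $2p+3$, so this does not match Lemma~\ref{NumThLemma} (odd length $2p'+1$) directly. Take $p'=p+1$.
\item Inserting an extra $1$ between two consecutive elements yields an even cycle of length $2p+4$. Its positions of one parity are the extra $1$ together with $p+1$ alternate elements, and this alternating set is non-adjacent in the original cycle.
\item Under this correspondence the admissible $t$ are the ``even'' alternate positions, and the inserted $1$ sits inside the unique double gap. This converts each inequality above into one inequality of the Inequality condition, and conversely.
\end{itemize}

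Once this bookkeeping is checked, Lemma~\ref{NumThLemma} states that the criterion holds if and only if between consecutive $1$'s of $S$ (the $2$'s of $J$) there is an even number of $0$'s (the $1$'s of $J$). This is precisely the Evenness condition. Hence $\Delta(p,J)$ is neighborly if and only if $J$ satisfies the Evenness condition, which proves both directions in this case.

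\emph{Main obstacle.} I expect the main obstacle to be this matching between admissible tuples and insertion positions. The two families of admissible tuples (starting at $1$ and ending at $\leq 2p+2$, or starting at $2$) must together cover all $2p+3$ cyclic rotations of the double gap. One also has to check that the length convention in Lemma~\ref{NumThLemma} is applied with $p+1$ in place of $p$. I would verify the matching first for $p=1$ and then by cyclic symmetry.
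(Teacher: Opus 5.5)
Your proposal is correct and follows the paper's proof essentially step for step. It uses Mani's theorem and Erokhovets' classification to reduce to joins or $\Delta(p,J)$, Proposition~\ref{NeighborlyProductsOfSimplicesProp} for the join case, and Lemma~\ref{OneTwoVectorLemma} with Proposition~\ref{NeighborCriterionProp} for the multiwedge case. It then inserts into the unique double gap to reach Lemma~\ref{NumThLemma}; you subtract $1$ and then insert a $1$, whereas the paper inserts a $2$ and then subtracts $1$, which amounts to the same thing. One phrasing slip: the octahedron's graph is not complete, so it is not $2$-neighborly, but, as you note, a $2$-sphere only needs the trivial $1$-neighborliness.
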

\begin{proof}
Suppose $d=2$. Then $K$ is a neighborly 1-sphere with 5 vertices if and only if $K$ is the boundary of a pentagon. Since $p\geq 1$ and $|J|=5$, we get $2p+3=|J|=5$ and the only such $J$ satisfying the Evenness condition is the 5-tuple $(1,1,1,1,1)$, which finishes the proof in this case.

Suppose $d\geq 3$ and $K$ is a neighborly $(d-1)$-sphere with $d+3$ vertices different from $\partial\Delta^1\ast\partial\Delta^1\ast\partial\Delta^1$. Due to~\cite{Mani}, the simplicial sphere $K$ is isomorphic to the boundary of a simplicial $d$-polytope with $d+3$ vertices. By Theorem~\ref{ErokhovetsClassificationThm}, the letter is isomorphic to a simplicial multiwedge over a cyclic polytope $C(2p+3,2p)$ for a certain $p\in\N$, since the neighborly sphere $K$ can not be a join of 3 boundaries of simplices due to Proposition~\ref{NeighborlyProductsOfSimplicesProp}. Therefore, $K$ is isomorphic to $\Delta(p,J)$ with $J\in\N^{2p+3}$. By Lemma~\ref{OneTwoVectorLemma}, $J$ is a $\{1,2\}$-vector. It remains to show that between any two consecutive ‘2's there are an even number of ‘1's when components of $J$ are viewed as forming a circle. 

Since $\Delta(p,J)$ is neighborly, by Proposition~\ref{NeighborCriterionProp} for each admissible $(p+1)$-tuple $t$ one has: $|J(t)|+2\geq |J(t^c)|$. It is easy to see that any admissible tuple has one of the following two forms:
\begin{itemize}
\item $t=(1,3,5,\ldots,2k-1,2k+2,2k+4,\ldots,2p+2)$ for some $k\in\{1,\ldots,p+1\}$;
\item $t=(2,4,6,\ldots,2k,2k+3,2k+5,\ldots,2p+3)$ for some $k\in\{1,\ldots,p+1\}$.
\end{itemize}

Then the inequality $|J(t)|+2\geq |J(t^c)|$ can be interpreted as follows: in each of the two cases above there is a unique ‘gap' of length two in the components of the admissible tuple $t$ when components of $J$ are viewed as forming a circle. Therefore, in what follows we consider the circular order on all tuples involved. Let us add to $J$ an extra component with value equal to $2$ and insert it between the elements of the ‘gap' in $t$. This yields a new $\{1,2\}$-vector $J'$ of even length $2p+4$. Moreover, we add this extra component with value $2$ to $t$, which gives rise to a tuple $t'$ of length $p+2$ in $J'$ such that between any two consecutive elements of $t'$ there is a unique element of $t^c$. Now, our inequality is equivalent to 
$$
|J'(t')| = |J(t)|+2\geq |J(t^c)| = |J'((t')^c)|.
$$

Thus, if we subtract $1$ from each component of $J'$, we get the Inequality condition of Lemma~\ref{NumThLemma}. Then by Lemma~\ref{NumThLemma}, $J$ satisfies the Evenness condition.

Suppose $d\geq 2$ and $K$ is isomorphic to $\Delta(p,J)$, where $J$ satisfies the evenness condition and $|J|=d+3$. Similarly to the above argument, using Lemma~\ref{NumThLemma}, we obtain that $J$ satisfies the inequality $2+|J(t)|\geq |J(t^c)|$ for each admissible tuple $t$. This implies that $\Delta(p,J)$ is neighborly, due to Proposition~\ref{NeighborCriterionProp}.
\end{proof}

If $m$ is odd, then it is well-known, see~\cite{Bjorner93}, that a simplicial $(n-1)$-dimensional sphere with $m=n+3$ vertices is neighborly if and only if it is the boundary of an even-dimensional cyclic polytope. Hence, to complete the classification of Murai spheres arising in Theorem~\ref{NeighborlyExplicitClassificationThm} (e), it remains to consider the case where the polytopal sphere $\Delta(p,J)$ has an even number of vertices.

%%%%%%%%%%%%%%%%%%%%%%%%%%%%%%%%%%%%%%%%%%%%%%%%%%%%%%%%%%%%%%%

\section{Neighborly Murai $(d-1)$-spheres with $d+3$ vertices}

In this section, we finish the classification of the Murai spheres identified in Theorem~\ref{NeighborlyExplicitClassificationThm} (e) and (f). As an application of our results, we show that any neighborly simplicial $(d-1)$-sphere with $\leq d+3$ vertices is isomorphic to a Murai sphere for each $d\geq 2$. 

By \Cref{NeighborlyExplicitClassificationThm}, we have 7 types of multicomplexes which induce a neigborly Murai $d$-sphere with $d+4$ vertices. Five of them induce an even-dimensional sphere (case (e)), and two of them induce an odd-dimensional sphere (case (f)). 

Let us first consider the case of odd dimensions. For every $k\in\NN$ we have two possibilities for $c$, namely $(k+1,k)$ and $(k,k,1)$, and in both cases only one $c$-multicomplex $M=M_2^k$. We find that the $(2k-1)$-dimensional neigborly Murai spheres $\Bier_{(k+1,k)}(M_2^k)$ and $\Bier_{(k,k,1)}(M_2^k)$ are isomorphic. An isomorphism is given by $\gen x1{k+1}\mapsto \gen x3{1}$ and $\gen xi{j}\mapsto \gen xi{j}$, for $(i,j)\ne(1,k+1)$. Note that $\gen x 3 0$ is a ghost vertex of $\Bier_{(k,k,1)}(M_2^k)$.

In the even dimensional case for every $k\in\NN$ we have five possibilities for $c$, namely $(k+2,k)$, $(k+1,k+1)$, $(k+1,k,1)$, $(k,k,2)$, or $(k,k,1,1)$. For every such $c$, let $\mathcal{M}^c$ be the set of isomorphic classes of neigborly Murai spheres of the form $\Bier_c(M)$, where $M_2^k\subseteq M\subseteq M_2^{k+1}$. By \Cref{NeigborlyJ}, a sphere $\Delta(p,J)$ is neigborly if and only if $J\in J_k$, where $\mathcal{J}_k$ is the set of all sequences $J\in\{1,2\}^{2p+3}$ with at least one 2 such that between any two consecutive 2's in $J$ there are evenly many 1's when components in $J$ are viewed as forming a circle and $\sum_{i=0}^{2p+3} j_i=2k+4$. Let $\mathcal{M}_k^J$ be the set of isomorphic classes of neighborly spheres of the form $\Delta(p,J)$, where $J\in \mathcal{J}_k$. In this section, we will prove the following theorem:

\begin{theorem}\label{MuraiClassesCoincideThm}
For every $k\in\NN$ the sets $\mathcal{M}^{(k+2,k)}$, $\mathcal{M}^{(k+1,k+1)}$, $\mathcal{M}^{(k+1,k,1)}$, $\mathcal{M}^{(k,k,2)}$, $\mathcal{M}^{(k,k,1,1)}$, and $\mathcal{M}_k^J$ coinside.
\end{theorem}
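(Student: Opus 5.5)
The plan is to prove two inclusions: each Murai class $\mathcal{M}^c$ is contained in $\mathcal{M}_k^J$, and conversely every $\Delta(p,J)$ with $J\in\mathcal{J}_k$ is realized as a Murai sphere $\Bier_c(M)$ for each of the five vectors $c$. The second inclusion will be reduced to a single convenient $c$.

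\textbf{Step 1: $\mathcal{M}^c\subseteq\mathcal{M}_k^J$.} Take $c$ from the list with $|c|=2k+2$ and $M_2^k\subseteq M\subseteq M_2^{k+1}$.
\begin{itemize}
\item By the theorem of Murai, $\Bier_c(M)$ is a $2k$-sphere, so it is a $(d-1)$-sphere with $d=2k+1$.
\item By the count of ghost vertices in the proof of Theorem~\ref{NeighborlyExplicitClassificationThm}, its number of geometric vertices is $|c|+m-\#\text{ghosts}=2k+4=d+3$.
\item By Theorem~\ref{NeighborlyExplicitClassificationThm} it is neighborly, so Theorem~\ref{NeigborlyJ} applies: it is either the octahedron or some $\Delta(p,J)$ with $J$ satisfying the Evenness condition and $|J|=2k+4$.
\item The octahedron occurs only for $d=3$, that is $k=1$. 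There it is itself $\Delta(p,J)$-like only degenerately, so I would either treat $k=1$ separately by hand or note that $k\ge2$ in case (e).
\item Finally, $J$ must contain a $2$. Otherwise $2p+3=2k+4$ would be even, which is impossible. Hence $J\in\mathcal{J}_k$.
\end{itemize}

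\textbf{Step 2: the five classes $\mathcal{M}^c$ coincide.} I would imitate the odd-dimensional argument and write down explicit vertex relabelings, with $M$ transported along them.
\begin{itemize}
\item For $(k+2,k)$ versus $(k+1,k,1)$: send $\gen x1{k+2}\mapsto\gen x31$, fix all other vertices, and let $\gen x30$ be the ghost vertex.
\item The pairs $(k+1,k+1)$ versus $(k,k,2)$ versus $(k,k,1,1)$ are handled the same way, moving top vertices $\gen xi{c_i}$ into the new coordinates.
\end{itemize}
To verify each relabeling, I would compare Stanley--Reisner ideals via Theorem~\ref{FaceIdealMuraiThm} and Proposition~\ref{MinimalNonMonmomialsProp}. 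The monomials of degree $k+1$ that involve the new variables must correspond to those involving the moved top powers. Here I may need to use that the multicomplexes $M$ range over all intermediate ones on both sides, rather than expecting each fixed $M$ to match.

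\textbf{Step 3: $\mathcal{M}_k^J\subseteq\mathcal{M}^{(k+1,k+1)}$.} Fix $c=(k+1,k+1)$. An intermediate multicomplex $M$ is encoded by the set $A\subseteq\{0,\dots,k+1\}$ of indices $i$ with $x_1^ix_2^{k+1-i}\in M$.
\begin{itemize}
\item Using Proposition~\ref{MinimalNonMonmomialsProp}, compute $\min(M)$ and $\min(M^\vee)$ in terms of $A$.
\item Apply the two polarizations to obtain the minimal non-faces of $\Bier_c(M)$ explicitly.
\item Compare these with the generators $v(J)$ of $I_{SR}(\Delta(2p+3,2p))(J)$, described by the sets $I_{a,r,s}$.
\end{itemize}
I expect the following shape. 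Each maximal run of consecutive elements of $A$, and of its complement, produces a block of vertices that behaves as a single vertex of weight $j_t\in\{1,2\}$ of the cyclic polytope. The cyclic order of the $2p+3$ blocks comes from reading around $\gen x10,\dots,\gen x1{k+1},\gen x2{k+1},\dots,\gen x20$.

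\textbf{Main obstacle.} The hardest step is surjectivity: showing that every $J\in\mathcal{J}_k$, up to rotation and reflection, arises from some $A$. I would first prove a precise dictionary $A\mapsto J(A)$ on small cases such as $k=2,3$. Then I would show that the Evenness condition is exactly the combinatorial constraint that $J(A)$ satisfies, and invert the map: given $J$, read off the $2$'s to place the run boundaries of $A$.
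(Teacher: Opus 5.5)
\textbf{Step 1.} This step is sound, and it takes a different route from the paper. Feeding the vertex count $2k+4$, neighborliness (Theorem~\ref{NeighborlyExplicitClassificationThm}) and the parity observation into Theorem~\ref{NeigborlyJ} gives $\mathcal{M}^c\subseteq\mathcal{M}_k^J$ for all five $c$ at once. The paper instead (Lemma~\ref{lem:Mcisomorphism}) reads off the cyclic pattern of minimal non-faces and checks the Evenness condition by hand. Your version is shorter but rests on the results of Mani and Erokhovets.

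For $k=1$ you cannot retreat to ``$k\ge 2$'', since the statement is for every $k$. Indeed $\Bier_{(2,2)}(\langle x_1x_2\rangle)$ is the octahedron, with minimal non-faces $\{\gen{x}{1}{0},\gen{x}{1}{1}\}$, $\{\gen{x}{2}{0},\gen{x}{2}{1}\}$, $\{\gen{x}{1}{2},\gen{x}{2}{2}\}$. So $J=(2,2,2)$, the complex $\Delta(3,J)$ of Section~5, must be admitted.

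\textbf{The gap: the reverse inclusion.} The admissible $M$ are nested. For $(k,k,2)$ and $(k,k,1,1)$ all exponents are at most $k$. For $(k+2,k)$ and $(k+1,k,1)$ the generator $x_1^{k+1}$ is also allowed, and for $(k+1,k+1)$ also $x_2^{k+1}$. Relabelings of a fixed $M$ therefore only go upward, giving $\mathcal{M}^{(k,k,1,1)}=\mathcal{M}^{(k,k,2)}\subseteq\mathcal{M}^{(k+2,k)}=\mathcal{M}^{(k+1,k,1)}\subseteq\mathcal{M}^{(k+1,k+1)}$. A $(k+1,k+1)$-multicomplex having $x_1^{k+1}$ or $x_2^{k+1}$ as a generator has no counterpart for $(k,k,1,1)$. ``Letting $M$ range over both sides'' is exactly the claim that remains unproved.

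Because Step~3 realizes $J$ in the \emph{largest} class, it yields at best $\mathcal{M}^{(k+1,k+1)}=\mathcal{M}_k^J$. What you need is that every $J\in\mathcal{J}_k$ is realized by an $M$ with all exponents at most $k$; such an $M$ is admissible for all five $c$ simultaneously. This is Lemma~\ref{lem:IsubMc}. It builds the generators $x_1^{\alpha_i}x_2^{\beta_i}$ recursively from $\beta_1=k$, each step dictated by the pair $(j_{2i-1},j_{2i})$. The Evenness condition guarantees that the current generator has the degree ($k$ or $k+1$) that the step requires.

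\textbf{Step 3's dictionary.} Your expected dictionary is not right: blocks do not correspond to runs of $A$ and its complement. For $A=\emptyset$, i.e.\ $M=M_2^k$, there is a single run, but there are $2k+3$ blocks and $J=(2,1,\ldots,1)$. By Lemma~\ref{lemma_minimalnon}, for such $M$ the minimal non-faces are, in cyclic order:
\begin{itemize}
\item first $\{\gen{x}{2}{0},\ldots,\gen{x}{2}{k}\}$;
\item then, alternately, the $\pol^\ast$-image of $(x^a)^c$ for each generator $x^a$, and the $\pol$-image of the unique minimal non-element $x_1^{\alpha_i+1}x_2^{\beta_{i+1}+1}$ between consecutive generators;
\item finally $\{\gen{x}{1}{0},\ldots,\gen{x}{1}{k}\}$.
\end{itemize}
Consecutive ones are disjoint, and setting $j_i=2k+4-|\sigma_i|-|\sigma_{i+1}|$ identifies the sphere with $\Delta(2r+1,J)$.
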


Let $k\in\NN$ and let $c=(c_1,c_2,\ldots)$ be $(k+2,k)$, $(k+1,k+1)$, $(k+1,k,1)$, $(k,k,2)$, or $(k,k,1,1)$. Let $M$ be a $c$-multicomplex such that $M_2^k\subseteq M\subseteq M_2^{k+1}$. Let $\{x_1^{\alpha_1}x_2^{\beta_1},\ldots, x_1^{\alpha_r}x_2^{\beta_r}\}$ be the minimal set of its generators, then $\alpha_i\ne \alpha_j$ and $\beta_i\ne \beta_j$ for $i\ne j$. So we may assume that $0\le \alpha_1<\alpha_2<\ldots<\alpha_r$ and $\beta_1>\beta_2>\ldots >\beta_r\ge 0$, and because of the assumption we have $k\le \alpha_i+\beta_i\le k+1$ for all $i$.

Suppose that $\alpha_i+\beta_i=k+1$. If $\alpha_{i+1}=\alpha_i+1$ then by the assumption we have $\beta_{i+1}=\beta_i-1$ or $\beta_{i+1}=\beta_i-2$. If $\alpha_{i+1}=\alpha_i+2$ and $\beta_{i+1}=\beta_i-3$ then $x_1^{\alpha_i+1}x_2^{\beta_i-2}\not\in M$ but this is not possible by the assumption, since $(\alpha_i+1)+(\beta_i-2)=k$. So if $\alpha_{i+1}=\alpha_i+2$ then $\beta_{i+1}=\beta_i-2$.
For $\alpha_{i+1}=\alpha_i+p$, where $p\ge 3$, we have $\beta_{i+1}\le \beta_i-p$, since $\alpha_{i+1}+\beta_{i+1}\le k+1$. But then $x_1^{\alpha_i+1}x_2^{\beta_i-2}\not\in M$, which is not possible.

Suppose that $\alpha_i+\beta_i=k$. If $\alpha_{i+1}=\alpha_i+1$ then by the assumption we have $\beta_{i+1}=\beta_i-1$. If $\alpha_{i+1}=\alpha_i+2$ and $\beta_{i+1}=\beta_i-2$ then $x_1^{\alpha_i+1}x_2^{\beta_i-1}\not\in M$, which is not possible, so $\beta_{i+1}=\beta_i-1$. As in the previous case, we see that $\alpha_{i+1}> \alpha_i+2$ is not possible.

\begin{lemma}\label{lemma_minimalnon}
Let $k\in\NN$, let $c$ denote one of $(k+2,k)$, $(k+1,k+1)$, $(k+1,k,1)$, $(k,k,2)$, $(k,k,1,1)$, and let $M=\langle x_1^{\alpha_1}x_2^{\beta_1},\ldots, x_1^{\alpha_r}x_2^{\beta_r}\rangle$, where $0\le \alpha_1<\alpha_2<\ldots<\alpha_r$ and $\beta_1>\beta_2>\ldots >\beta_r\ge 0$.
For every $i=1,\ldots,r-1$ there exists exactly one minimal non-element $x_1^{\gamma_i}x_2^{\delta_i}\in\min M$, such that $\alpha_i<\gamma_i\le \alpha_{i+1}$. We have $(\gamma_i,\delta_i)=(\alpha_i+1,\beta_{i+1}+1)$, so
\[
\gamma_i+\delta_i=\begin{cases}
        k+2,& \alpha_i+\beta_i=k+1,\ \alpha_{i+1}=\alpha_i+1\text{ and},\ \beta_{i+1}=\beta_i-1,\\
        k+1,&\text{ otherwise},
    \end{cases}
\]    
for all $i=1,\ldots, r-1$.
\end{lemma}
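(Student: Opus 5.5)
We prove Lemma~\ref{lemma_minimalnon} by a direct analysis of the staircase of the multicomplex. The key observation is that $M$ is the order ideal generated by $x_1^{\alpha_1}x_2^{\beta_1},\ldots,x_1^{\alpha_r}x_2^{\beta_r}$. Here we only look at the exponents of $x_1,x_2$: when $c$ has third or fourth components, the condition $M\subseteq M_2^{k+1}$ forces $M$ to involve only $x_1,x_2$, and the minimal non-elements involving $x_3,x_4$ are the variables themselves, which have $\gamma=0$ and do not lie in the range $\alpha_i<\gamma\le\alpha_{i+1}$. The proof then has three steps.

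\emph{Membership criterion.} A monomial $x_1^{\gamma}x_2^{\delta}$ with $\gamma\le c_1$, $\delta\le c_2$ lies in $M$ if and only if there is an index $j$ with $\gamma\le\alpha_j$ and $\delta\le\beta_j$. Because the $\alpha_j$ increase and the $\beta_j$ decrease, this reduces to a single test. If $j(\gamma)$ denotes the smallest index with $\alpha_j\ge\gamma$, then $x_1^\gamma x_2^\delta\in M$ if and only if $\delta\le\beta_{j(\gamma)}$, with $M$ containing no such monomial when $\gamma>\alpha_r$.

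\emph{Locating the unique minimal non-element.} Suppose $x_1^\gamma x_2^\delta\in\min M$ and $\alpha_i<\gamma\le\alpha_{i+1}$. Then $j(\gamma)=i+1$, so non-membership means $\delta\ge\beta_{i+1}+1$. Minimality forces $x_1^{\gamma}x_2^{\delta-1}\in M$, which gives $\delta=\beta_{i+1}+1$. Minimality also forces $x_1^{\gamma-1}x_2^{\delta}\in M$. If $\gamma-1>\alpha_i$, then $j(\gamma-1)=i+1$ and $\delta>\beta_{i+1}$, a contradiction. Hence $\gamma=\alpha_i+1$. Conversely, $x_1^{\alpha_i+1}x_2^{\beta_{i+1}+1}$ is a non-element, since $j=i+1$ and $\beta_{i+1}+1>\beta_{i+1}$. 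Both of its predecessors lie in $M$: $x_1^{\alpha_i}x_2^{\beta_{i+1}+1}$ is in $M$ because $\beta_{i+1}+1\le\beta_i$, and $x_1^{\alpha_i+1}x_2^{\beta_{i+1}}$ is in $M$ because $\alpha_i+1\le\alpha_{i+1}$. We must also check that $x_1^{\alpha_i+1}x_2^{\beta_{i+1}+1}$ is a $c$-monomial, i.e. $\alpha_i+1\le c_1$ and $\beta_{i+1}+1\le c_2$. This holds because $\alpha_i<\alpha_{i+1}\le c_1$ and $\beta_{i+1}<\beta_i\le c_2$. This establishes existence and uniqueness with $(\gamma_i,\delta_i)=(\alpha_i+1,\beta_{i+1}+1)$.

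\emph{Computing the degree.} We have $\gamma_i+\delta_i=\alpha_i+\beta_{i+1}+2$, and we evaluate it using the case analysis of consecutive generators carried out just before the lemma.
\begin{itemize}
\item If $\alpha_i+\beta_i=k+1$, $\alpha_{i+1}=\alpha_i+1$ and $\beta_{i+1}=\beta_i-1$, then the degree is $\alpha_i+\beta_i+1=k+2$.
\item If $\alpha_i+\beta_i=k+1$ and the next generator is $(\alpha_i+1,\beta_i-2)$ or $(\alpha_i+2,\beta_i-2)$, then the degree is $\alpha_i+\beta_i=k+1$.
\item If $\alpha_i+\beta_i=k$, the only possibility is $\beta_{i+1}=\beta_i-1$, so the degree is $\alpha_i+\beta_i+1=k+1$.
\end{itemize}

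The main point needing care is the completeness of that preceding case list. The earlier paragraph excludes $\alpha_{i+1}\ge\alpha_i+3$, and the subcase $\alpha_{i+1}=\alpha_i+2$ with $\beta_{i+1}=\beta_i-1$ when $\alpha_i+\beta_i=k+1$ is excluded because the sum would be $k+2$. We will recheck these exclusions against $M_2^k\subseteq M\subseteq M_2^{k+1}$ so that the three items above exhaust all cases.
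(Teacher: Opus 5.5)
Your proof is correct, but it is organized differently from the paper's.

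The paper splits into the five admissible patterns of consecutive generators: two with $\alpha_i+\beta_i=k$ and three with $\alpha_i+\beta_i=k+1$. In each pattern it locates $(\gamma_i,\delta_i)$ by hand, and uniqueness is argued only briefly (e.g.\ ruling out $\gamma_i=\alpha_i+2$ in the second case).

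You instead use the membership criterion $x_1^\gamma x_2^\delta\in M\iff\delta\le\beta_{j(\gamma)}$ to show something more general. For \emph{any} staircase generated by an antichain, the unique minimal non-element with $\alpha_i<\gamma\le\alpha_{i+1}$ is $x_1^{\alpha_i+1}x_2^{\beta_{i+1}+1}$. This step uses neither $k$ nor the sandwich $M_2^k\subseteq M\subseteq M_2^{k+1}$; the case list enters only when you evaluate $\gamma_i+\delta_i=\alpha_i+\beta_{i+1}+2$. Your route gives a uniform, fully rigorous existence and uniqueness statement and makes clear exactly where the hypothesis on $M$ is used. The paper's case split is more hands-on and mirrors the case structure it reuses later in Lemma~\ref{lem:IsubMc}.

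You can also drop the completeness check you flag at the end. Since $\beta_{i+1}\le\beta_i-1$ and $\alpha_i+\beta_i\le k+1$, you have $\alpha_i+\beta_{i+1}\le k$. Equality holds iff $\alpha_i+\beta_i=k+1$ and $\beta_{i+1}=\beta_i-1$, and then $\alpha_{i+1}+\beta_{i+1}\le k+1$ forces $\alpha_{i+1}=\alpha_i+1$. Otherwise $\gamma_i+\delta_i\le k+1$. The reverse bound $\gamma_i+\delta_i\ge k+1$ holds because a non-element cannot have degree $\le k$ when $M_2^k\subseteq M$ and $c_1,c_2\ge k$. This yields the dichotomy $k+2$ versus $k+1$ without enumerating the admissible consecutive patterns.
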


\begin{proof}
We have to consider five cases:
\begin{itemize}
\item  Let $\alpha_i+\beta_i=k$ and $(\alpha_{i+1},\beta_{i+1})=(\alpha_i+1,\beta_i-1)$. Then $\gamma_i=\alpha_{i+1}$ and the only minimal non-element of the form $x_1^{\gamma_i}x_2^\delta$ is $x_1^{\alpha_i+1}x_2^{\beta_i}\in\min M$. Hence $\delta_i=\beta_i$ and therefore $\gamma_i+\delta_i=k+1$.
\item  Let $\alpha_i+\beta_i=k$ and $(\alpha_{i+1},\beta_{i+1})=(\alpha_i+2,\beta_i-1)$. Suppose there is $\beta$ such that $x_1^{\alpha_i+2}x_2^\beta\in\min(M)$. Since $x_1^{\alpha_{i+1}}x_2^{\beta_{i+1}}=x_1^{\alpha_i+2}x_2^{\beta_i-1}\in M$ we have $\beta\ge \beta_i$. But for such $\beta$ we have $x_1^{\alpha_i+1}x_2^\beta\not\in M$, so $\gamma_i=\alpha_i+1$. Since $x_1^{\gamma_i}x_2^{\beta_i-1}\in M$ we get $\delta_i=\beta_i$ and $\gamma_i+\delta_i=k+1$.
\item  Let $\alpha_i+\beta_i=k+1$ and $(\alpha_{i+1},\beta_{i+1})=(\alpha_i+1,\beta_i-1)$. Then $\gamma_i=\alpha_i+1$, $\delta_i=\beta_i$, and $\gamma_i+\delta_i=k+2$.
\item  Let $\alpha_i+\beta_i=k+1$ and $(\alpha_{i+1},\beta_{i+1})=(\alpha_i+1,\beta_i-2)$. Then $\gamma_i=\alpha_i+1$, $d_i=\beta_i-1$, and $\gamma_i+\delta_i=k+1$.
\item  Let $\alpha_i+\beta_i=k+1$ and $(\alpha_{i+1},\beta_{i+1})=(\alpha_i+2,\beta_i-2)$. As in the second case, we see that $\gamma_i=\alpha_i+1$. Then $\delta_i=\beta_i-1$ and $\gamma_i+\delta_i=k+1$. \qedhere
\end{itemize}    
\end{proof}

Let $M=\langle x_1^{\alpha_1}x_2^{\beta_1},\ldots, x_1^{\alpha_r}x_2^{\beta_r}\rangle$ be as in the previous lemma. For every $i=1,\ldots,r$ the element $\sigma_{2i+1}=x_{1,\alpha_{i+1}+1}\ldots x_{1,c_1}x_{2,\beta_{i+1}+1}\ldots x_{2,c_2}$ is a generator of Stanley--Reisner ideal $I_{SR}(\Bier_c(M))$. By the previous lemma for every $i=1,\ldots,r-1$ the element $\sigma_{2i}=x_{1,0}\ldots x_{1,\gamma_i-1}x_{2,0}\ldots x_{2,\delta_i-1}$ is a generator of Stanley--Reisner ideal $I_{SR}(\Bier_c(M))$. 

From the previous lemma the generator $\sigma_{2i}=x_{1,0}\ldots x_{1,\gamma_i-1}x_{2,0}\ldots x_{2,\delta_i-1}$ is disjoint with the generators $\sigma_{2i-1}=x_{1,\alpha_i+1}\ldots x_{1,c_1}x_{2,\beta_i+1}\ldots x_{2,c_2}$ and $\sigma_{2i+1}=x_{1,\alpha_{i+1}+1}\ldots x_{1,c_1}x_{2,\beta_{i+1}+1}\ldots x_{2,c_2}$ of Stanley--Reisner ideal $I_{SR}(\Bier_c(M))$. Also for every $i=0,\ldots,2r$, we have $j_i=|V(\Bier_c(M))|-|\sigma_i|-|\sigma_{i+1}|$ is 1 or 2 in particular it is not possible that $|\sigma_i|=|\sigma_{i+1}|=k+2$ for some $i$. Suppose that $j_i=2$ and $j_{i+1}=1$. Then $|\sigma_i|=|\sigma_{i+1}|=k+1$ and $\sigma_{i+2}=k$, so $\sigma_{i+3}=k+1$ and $j_{i+2}=1$. In a similar way, we can see that $j_{i+3}=2$ or $(j_{i+3},i_{i+4})=(1,1)$, so the smallest $p$ such that $2=j_{i+p}$ is an odd number, i.e. there are evenly many 1's between $2=j_i$ and the next $2=j_{i+p}$.

For simplicity, until the end of this section we will use the notation $\Delta(2p+3,J):=\Delta(p,J), p\geq 1$, so that $J$ has $2p+3$ components. We will show that every Murai sphere from the previous Lemma is isomorphic to $\Delta(2r+1,J)$ or $\Delta(2r-1,J)$ for some $J$ and $r$ to be the minimal number of generators of $M$. Let us recall the definition of the complex $\Delta(2r+1,J)$, where $r\in\NN$ and $J=(j_0,\ldots,j_{2r})\in\NN^{2r+1}$. In what follows, all indices will be calculated $\pmod{2r+1}$. We define the simplicial complex $\Delta(2r+1,J)$ with the set of vertices $V(\Delta(2r+1,J))=\{y_{p,s}\mid 0\le p\le 2r,1\le s\le j_p\}$ and the set of minimal nonsimplices $\min(\Delta(2r+1,J))=\{\tau_i\mid i=0,\ldots, 2r\}$, where $\tau_i=\{y_{p,s}\mid p\in J_i, 1\leq s\leq j_p\}$ and $J_i=\{i+2q \mid q=0,\ldots,r-1\}$. So $|\tau_i|=\sum_{q=0}^{r-1}j_{i+2q}=\sum_{p\in J_i}j_p$. 
If $J'=(j'_0,\ldots,j'_{2r})$ is such that $j'_i=j_{i+1}$ for all $i$ then $\Delta(2r+1,J)$ and $\Delta(2r+1,J')$ are isomorphic via isomorphism $y_{p,s}\mapsto y_{p+1,s}$. If $j'_i=j_{2r-i}$ then $\Delta(2r+1,J)$ and $\Delta(2r+1,J')$ are isomorphic via isomorphism $y_{p,s}\mapsto y_{2r-1,s}$. So the isomorphism type of $\Delta(2r+1,J)$ is independent of cyclic permutation and reversion of the sequence $J$.

\begin{example}
Let $J=(2,2,2,1,1)$. Then $\tau_0=\{y_{0,1},y_{0,2},y_{2,1}\}$, $\tau_1=\{y_{1,1},x_{1,2},y_{3,1},y_{3,2}\}$, $\tau_2=\{y_{2,1},y_{4,1}\}$, $\tau_3=\{y_{3,1},y_{3,2},y_{0,1},y_{0,2}\}$, $\tau_4=\{y_{4,1},y_{1,1},y_{1,2}\}$, and for every $i=0,\ldots, 4$ we have $\tau_i\cap\tau_{i+1}=\emptyset$.
Let us order the vertices by the lexicographical order of their indices, where the first numbers of the indices are ordered as $0<2<4<1<3$. If we write down all the vertices $3$ times ($3=r+1$) in order, we get
\[
\underbrace{y_{1,1}\!,\!y_{1,2}\!,\!y_{3,1}}_{\tau_1},\underbrace{y_{0,1}\!,\!y_{0,2}}_{M_0},\underbrace{y_{2,1}\!,\!y_{2,2}\!,\!y_{4,1}}_{\tau_2},\underbrace{y_{1,1}\!,\!y_{1,2}}_{M_1},\underbrace{y_{3,1}\!,\!y_{0,1}\!,\!y_{0,2}}_{\tau_3},\underbrace{y_{2,1}\!,\!y_{2,2}}_{M_2},\underbrace{y_{4,1}\!,\!y_{1,1}\!,\!y_{1,2}}_{\tau_4},\underbrace{y_{3,1}}_{M_3},\underbrace{y_{0,1}\!,\!y_{0,2}\!,\!y_{2,1}\!,\!y_{2,2}}_{\tau_0},\underbrace{y_{4,1}}_{M_4},
\]
where $M_{i}:=V\setminus(\tau_{i+1}\cup\tau_{i+2})=\{y_{i,s}\mid 1\le s\le j_{i}\}$ has $j_{i}$ elements.

For $c=(3,3)$ and $M=\langle x_2^3,x_1^2x_2\rangle$ the Murai sphere has five minimal nonsimplices: $\sigma_0=\left\{\gen x10,\gen x11,\gen x12\right\}$, $\sigma_1=\left\{\gen x13,\gen x22,\gen x23\right\}$, $\sigma_2=\left\{\gen x10,\gen x20,\gen x21\right\}$, $\sigma_3=\left\{\gen x11,\gen x12,\gen x13\right\}$, and $\sigma_4=\left\{\gen x20,\gen x21,\gen x22,\gen x23\right\}$. If we write all the vertices of the Murai sphere 3 times, we get 
\[
\underbrace{\gen x10,\gen x11,\gen x12}_{\sigma_0},\gen x13,\underbrace{\gen x23,\gen x22,\gen x21,\gen x20}_{\sigma_4},\gen x10,\underbrace{\gen x11,\gen x12,\gen x13}_{\sigma_3},\gen x23,\gen x22,\underbrace{\gen x21,\gen x20,\gen x10}_{\sigma_2},\gen x11,\gen x12,\underbrace{\gen x13,\gen x23,\gen x22}_{\sigma_1},\gen x21,\gen x20.
\]
Since $|\tau_i|=|\sigma_{4-i}|$, we get the same pattern as above, so the Murai sphere $\Bier_c(M)$ is isomorphic to $\Delta(5,J)$, and it is easy to read an isomorphism from both sequences. The isomorphism class of $\Delta(5,J)$ is independent of a cyclic permutation or reversion of $J$. So $\Bier_c(M)$ is isomorphic to $\Delta(5,J)$, where the components of $J$ are $j_i=|V(\Bier_c(M))|-|\sigma_i|-|\sigma_{i+1}|$.
\end{example}

\begin{corollary}
\begin{enumerate}
\item Let $J=(j_0,\ldots,j_{2r})\in\N^{2r+1}$. Then for the minimal non-simplices $\tau_i$, $i=0,\ldots,2r$
we have $\tau_i\cap \tau_{i+1}=\emptyset$ and $|V(\Delta(2r+1,J)|-|\tau_i|-|\tau_{i+1}|=j_{i-1}$ for all $i=1,\ldots,2r+1$.
\item Let $K$ be a simplicial complex with the set of minimal non-simplices $\min K=\{\sigma_0\ldots,\sigma_{2r}\}$. If for every $i=0,\ldots,2r$ we have $\sigma_i\cap\sigma_{i+1}=\emptyset$ and $|V(K)|-|\sigma_i|-|\sigma_{i+1}|=j_i$, then $K$ is isomorphic to $\Delta(2r+1,(j_0,\ldots,j_{2r}))$.
\end{enumerate}
\end{corollary}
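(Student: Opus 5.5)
Part (1) is a direct computation with the index sets $J_i=\{i,i+2,\ldots,i+2r-2\}$ modulo $2r+1$. The sets $J_i$ and $J_{i+1}=\{i+1,i+3,\ldots,i+2r-1\}$ are disjoint. Their union consists of the $2r$ consecutive residues $i,i+1,\ldots,i+2r-1$, so it is everything except $i+2r\equiv i-1$. Since the blocks $Y_p:=\{y_{p,s}\mid 1\le s\le j_p\}$ partition $V(\Delta(2r+1,J))$ and $\tau_i=\bigcup_{p\in J_i}Y_p$, we get $\tau_i\cap\tau_{i+1}=\emptyset$ and $V\setminus(\tau_i\cup\tau_{i+1})=Y_{i-1}$. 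In particular $|V|-|\tau_i|-|\tau_{i+1}|=j_{i-1}$.

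For part (2), set $M_i:=V(K)\setminus(\sigma_i\cup\sigma_{i+1})$, so that $|M_i|=j_i$. The plan is to show that the $M_p$ partition $V(K)$, and that a vertex of $M_p$ lies in exactly the $\sigma_i$ with $i\in\{p+2,p+4,\ldots,p+2r\}$. For a vertex $v$, record its incidence word $\varepsilon\in\{0,1\}^{\mathbb Z/(2r+1)}$, where $\varepsilon_i=1$ iff $v\in\sigma_i$. Disjointness of consecutive $\sigma$'s says that $\varepsilon$ has no two cyclically adjacent $1$'s, so $n_v:=\#\{i:\varepsilon_i=1\}\le r$. Moreover $v\in M_i$ iff $\varepsilon_i=\varepsilon_{i+1}=0$.

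Counting adjacent $00$-pairs in a cyclic word of odd length $2r+1$ with $n_v$ isolated ones gives $(2r+1-n_v)-n_v=2r+1-2n_v\ge 1$. Hence every vertex lies in at least one $M_i$, and $\sum_i j_i=\sum_v(2r+1-2n_v)\ge|V(K)|$. Equality holds iff $n_v=r$ for every $v$. Here I use that $\sum_i j_i=|V(K)|$, which is the vertex count of $\Delta(2r+1,J)$ and is therefore implicit in the conclusion. (In the application, the $j_i\in\{1,2\}$ sum to $|V(\Bier_c(M))|$, and this should be checked there.) This is the main point requiring care: without this count the $M_i$ only cover $V(K)$ rather than partition it.

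Granting $n_v=r$, the word $\varepsilon$ has a unique $00$ at some position $(p,p+1)$, and the ones are forced at $p+2,p+4,\ldots,p+2r$. Thus $M_p$ consists exactly of the vertices lying in $\sigma_i$ for $i\in\{p+3,p+5,\ldots,p+2r+1\}-1$; equivalently, $v\in M_p$ is in $\sigma_i$ iff $i+1\in\{p+3,p+5,\ldots,p\}$. On the other side, $y\in Y_p$ lies in $\tau_{q}$ iff $p\in J_q$, i.e. $q\in\{p,p-2,\ldots,p-2r+2\}$. Modulo $2r+1$ this is the same set $\{p+3,p+5,\ldots,p+2r+1\}$.

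Therefore any bijection $V(K)\to V(\Delta(2r+1,J))$ sending $M_p$ onto $Y_p$ (possible since $|M_p|=j_p=|Y_p|$) maps $\sigma_i$ onto $\tau_{i+1}$ for all $i$. A simplicial complex is determined by its vertex set together with its minimal non-faces, so this bijection is an isomorphism $K\cong\Delta(2r+1,(j_0,\ldots,j_{2r}))$.
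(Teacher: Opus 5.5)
Your proof of part (1) is correct, and so is your proof of part (2) once the vertex-count condition is made an explicit hypothesis. You should not describe that condition as ``implicit in the conclusion'': you cannot use a consequence of the conclusion to prove it. It is a hypothesis genuinely missing from the statement, and (2) is false without it.

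For example, take the boundary of the $5$-dimensional cross-polytope, $\partial\Delta^1\ast\partial\Delta^1\ast\partial\Delta^1\ast\partial\Delta^1\ast\partial\Delta^1$. It has $10$ vertices and exactly five minimal non-faces, the pairwise disjoint antipodal pairs. With $r=2$ it satisfies all hypotheses of (2) with $j_i=10-2-2=6$, yet $\Delta(5,(6,6,6,6,6))$ has $30$ vertices. A cone over any $\Delta(2r+1,J)$ gives further counterexamples.

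So (2) should assume $\sum_i j_i=|V(K)|$. Equivalently $\sum_i|\sigma_i|=r\,|V(K)|$, i.e.\ every vertex lies in exactly $r$ of the $\sigma_i$. Your argument proves exactly this corrected version. In the paper's application (Lemma~\ref{lem:Mcisomorphism}) one then has to check that $\sum_i j_i=2k+4=|V(\Bier_c(M))|$.

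As for the route, the paper gives no separate proof. It reads the corollary off the preceding example, where the vertices are written out $r+1$ times so that every $\tau_i$ and every block $M_i=V\setminus(\tau_{i+1}\cup\tau_{i+2})$ occupies a run of consecutive positions. An isomorphism is then obtained by matching two such listings.

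Your bijection $M_p\to Y_p$ is the same idea. However, the paper's pattern-matching tacitly assumes that the blocks $M_p$ partition $V(K)$ and that a vertex's membership in the $\sigma_i$ depends only on its block. Your incidence-word count is what actually establishes this: isolated ones, exactly $2r+1-2n_v$ cyclic positions of type $00$, and the forced alternating pattern when $n_v=r$. It also isolates exactly where the vertex count is needed.
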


\begin{corollary}
Let $J\in\N^{2r+1}$ and $J'\in\N^{2r'+1}$ be such that $|J|=|J'|$. Then $\Delta(2r+1,J)$ is isomorphic to $\Delta(2r'+1,J')$ if and only if $J$ equals $J'$ up to cyclic permutation and reversion.
\end{corollary}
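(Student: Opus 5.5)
The ``if'' direction is already established in the text preceding the corollary. A cyclic shift $j'_i=j_{i+1}$ is realized by $y_{p,s}\mapsto y_{p+1,s}$, and a reversion is realized by the analogous reindexing $p\mapsto 2r-p$ (the formula printed there, $y_{p,s}\mapsto y_{2r-1,s}$, appears to be a typo for $y_{2r-p,s}$). Each such map carries the family $\{\tau_i\}$ onto the family $\{\tau'_i\}$, so it is an isomorphism. For the converse, I will extract $J$ up to dihedral symmetry from combinatorial data that any isomorphism preserves: the set of minimal non-faces, the disjointness relation among them, and their cardinalities.

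Suppose $\varphi\colon\Delta(2r+1,J)\to\Delta(2r'+1,J')$ is an isomorphism. It maps minimal non-faces bijectively onto minimal non-faces. Since $\min(\Delta(2r+1,J))=\{\tau_0,\ldots,\tau_{2r}\}$, comparing cardinalities gives $2r+1=2r'+1$, hence $r=r'$. Moreover $\varphi$ preserves disjointness of minimal non-faces and the sizes $|\tau_i|$.

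The key step is to show that the ``disjointness graph'' $G$ is exactly the cycle $\tau_0-\tau_1-\cdots-\tau_{2r}-\tau_0$. Its vertices are $\tau_0,\ldots,\tau_{2r}$, with an edge when two of them are disjoint.

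1. Since every $j_p\ge1$, each index $p$ contributes at least one vertex $y_{p,1}$. Hence $\tau_i\cap\tau_l=\emptyset$ if and only if $J_i\cap J_l=\emptyset$.
2. Multiplication by $2$ is invertible mod $2r+1$, so $\mathbb Z_{2r+1}$ is a single cycle under $p\mapsto p+2$. In this cyclic order $J_i$ is an arc of $r$ consecutive positions starting at $i$.
3. Two arcs of length $r$ in a cycle of length $2r+1$ are disjoint exactly when one starts $r$ or $r+1$ steps after the other.
4. Since $i+2r\equiv i-1$ and $i+2(r+1)\equiv i+1$, this happens exactly when $l=i\pm1$. (For $r=1$ the graph is the triangle, which is still the $3$-cycle.)

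This verification is where I expect the only real care to be needed.

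Consequently $\varphi$ induces an automorphism of the $(2r+1)$-cycle, so $\varphi(\tau_i)=\tau'_{\sigma(i)}$ with either $\sigma(i)=i+a$ or $\sigma(i)=a-i$. Now apply part (1) of the preceding corollary. It gives $j_{i-1}=|V|-|\tau_i|-|\tau_{i+1}|$, and the vertex count and the sizes are preserved by $\varphi$. In the first case we obtain $j_{i-1}=j'_{i+a-1}$, a cyclic shift. In the second case the pair $\{\tau_i,\tau_{i+1}\}$ goes to $\{\tau'_{a-i-1},\tau'_{a-i}\}$, so $j_{i-1}=j'_{a-i-2}$, a reversion composed with a shift. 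In either case $J'$ equals $J$ up to cyclic permutation and reversion, as claimed.
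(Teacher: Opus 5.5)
Your proof is correct and follows the same route as the paper's: an isomorphism carries the cyclic disjointness pattern of the minimal non-faces $\tau_0,\ldots,\tau_{2r}$ onto that of $\Delta(2r'+1,J')$, and $J$ is then recovered up to rotation and reversion via $j_{i-1}=|V|-|\tau_i|-|\tau_{i+1}|$ from part (1) of the preceding corollary. Your explicit check that non-consecutive $\tau_i$ always intersect, so that the disjointness graph is exactly the $(2r+1)$-cycle, fills in the step the paper leaves implicit when it invokes ``the same pattern''.
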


\begin{proof}
We already know that if $J$ equals $J'$ up to cyclic permutation and reversion, then $\Delta(2r+1,J)$ is isomorphic to $\Delta(2r'+1,J')$.

The other direction follows from the previous Corollary since the pattern of minimal non-simplices of $\Delta(2r+1,J)$ is, by an isomorphism, mapped to the same pattern of $\min(\Delta(2r'+1,J'))$.
\end{proof}

Note that for $J\in\mathcal J_k$, minimal non-simplices of $\Delta(|J|,J)$ are of length $k+1$ or $k+2$. Hence, if $j_i=2$, then $|\sigma_i|+|\sigma_{i+1}|=2k+4-2=2k+2$, so $|\tau_i|=|\tau_{i+1}|=k+1$, and if $j_i=1$, then one of $|\tau_i|$ and $|\tau_{i+1}|$ is $k+1$ and other is $k+2$.

\begin{lemma}\label{lem:IsubMc}
Let $k\in\NN$, $J\in \mathcal J_k$, and let $c$ be $(k+2,k)$, $(k+1,k+1)$, $(k+1,k,1)$, $(k,k,2)$, or $(k,k,1,1)$. There exists $M=\langle x_1^{a_1}x_2^{b_1},\ldots x_1^{a_r}x_2^{b_r}\rangle$, where $0\le a_1<a_2<\ldots<a_r\le k$ and $k\ge b_1>b_2>\ldots >b_r\ge 0$, such that $M_2^k\subseteq M\subseteq M_2^{k+1}$ and $\Delta(|J|,J)$ is isomorphic to $\Bier_c(M)$.
\end{lemma}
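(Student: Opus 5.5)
Given $J\in\mathcal J_k$, I will construct $M$ explicitly from $J$ and then compare minimal non-faces using the second part of the Corollary characterising $\Delta(2r+1,J)$ by a cyclic chain of minimal non-simplices. After a cyclic permutation and reversion of $J$ (which do not change the isomorphism type), write $J=(j_0,\ldots,j_{2s})$ with $|J|=2k+4$. Since every pair $\tau_i,\tau_{i+1}$ has sizes in $\{k+1,k+2\}$, with both equal to $k+1$ exactly when $j_i=2$, the sequence of sizes $|\tau_0|,\ldots,|\tau_{2s}|$ around the cycle is determined by $J$. I will start the staircase for $M$ at a position of a $2$ in $J$; such a position exists by definition of $\mathcal J_k$.

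The construction is a lattice path. Read the cyclic sequence of sizes $|\tau_i|$ and interpret the odd-indexed and even-indexed minimal non-faces alternately, as in the discussion after Lemma~\ref{lemma_minimalnon}. The sets $\sigma_{2i+1}$ correspond to generators $x_1^{\alpha_i}x_2^{\beta_i}$, with $|\sigma_{2i+1}|=(c_1-\alpha_{i+1})+(c_2-\beta_{i+1})$. The sets $\sigma_{2i}$ correspond to the minimal non-elements $x_1^{\gamma_i}x_2^{\delta_i}$, with $|\sigma_{2i}|=\gamma_i+\delta_i$. I then define the steps $(\alpha_{i+1}-\alpha_i,\beta_i-\beta_{i+1})$ to be the allowed step types from Lemma~\ref{lemma_minimalnon}: $(1,1)$, $(1,2)$, $(2,1)$, $(2,2)$, where the relevant degree is $k$ or $k+1$. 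The choice among these is dictated by consecutive pairs $(j_{2i},j_{2i+1})$. One has to check that the path starts on the $x_2$-axis at $b_1\le k$ and ends on the $x_1$-axis at $a_r\le k$ (in terms of $c_1,c_2$). Doing this amounts to a counting argument: the total $x_1$-displacement plus $x_2$-displacement equals a fixed function of $|J|=2k+4$. The Evenness condition guarantees that the degree pattern ($k$ versus $k+1$) is consistent, which is exactly the converse of the parity computation preceding the example. The five choices of $c$ differ only in how the end-caps $\sigma_1$ and $\sigma_{2r+1}$ (involving $x_{1,c_1},x_{2,c_2}$ and the ghost coordinates $x_3,x_4$) absorb the extra one or two vertices. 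I will handle these by adjusting the endpoints: for instance, choose the first and last generators to have $\beta_1$ or $\alpha_r$ equal to $k$ or $k+1$ depending on $c$.

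Having defined $M$, I obtain $M_2^k\subseteq M\subseteq M_2^{k+1}$ because all generators have degree $k$ or $k+1$ and the steps are at most $2$; this is the converse of the case analysis before Lemma~\ref{lemma_minimalnon}. By Theorem~\ref{FaceIdealMuraiThm} and Lemma~\ref{lemma_minimalnon}, the minimal non-faces of $\Bier_c(M)$ are exactly the $\sigma_i$. They form a cyclic sequence in which consecutive sets are disjoint, and I verify that $|V|-|\sigma_i|-|\sigma_{i+1}|=j_i$. The Corollary then gives $\Bier_c(M)\cong\Delta(|J|,J)$.

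I expect the main obstacle to be the boundary bookkeeping. This means matching the wrap-around pair $(\sigma_{2r},\sigma_0)$ and the end generators with the appropriate entries of $J$ for each of the five vectors $c$, where ghost vertices change $|V|$. I would first treat $c=(k+1,k+1)$ in detail, mirroring the worked example with $c=(3,3)$. I would then derive the other four cases by shifting $b_1$ and $a_r$ by one, and by using the vertex relabellings, such as $\gen x1{k+1}\mapsto\gen x31$, that were used in the odd-dimensional case.
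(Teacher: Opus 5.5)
Your plan is essentially the paper's proof. You rotate $J$ so that a $2$ sits at the wrap-around position, start the staircase at $x_1^{j_0-1}x_2^{k}$, and let each pair $(j_{2i-1},j_{2i})$ dictate the next step: the Evenness condition forces the current degree ($k$ or $k+1$) to be the one that step requires, and it gives the two separate identities $j_0+j_2+\cdots+j_{2r-2}=k+1=j_1+j_3+\cdots+j_{2r-1}$ (not just their sum), so the path closes at $x_1^{k}x_2^{j_{2r-1}-1}$; you then match the cyclic chain of minimal non-faces with the $\tau_i$ via the Corollary.

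One correction to the bookkeeping you flag: no $c$-dependent endpoint adjustment is needed, and taking $\beta_1$ or $\alpha_r$ equal to $k+1$ would violate the required $b_1,a_r\le k$. These bounds together with $M_2^k\subseteq M$ force $a_r=b_1=k$, and this single $M$ works for all five $c$, because $|c|=2k+2$ and $\Bier_c(M)$ has $2k+4$ geometric vertices in each case; the extra variables only contribute ghost vertices and the vertices $\gen{x}{3}{1},\gen{x}{3}{2},\gen{x}{4}{1}$ to the dual non-faces (equivalently, use the relabellings you mention).
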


\begin{proof}
We may assume that $j_{2r}=2$. 
If $j_0=1$, then we define $(\alpha_1,\beta_1)=(0,k)$, and if $j_0=2$, then we define $(\alpha_1,\beta_1)=(1,k)$. So $|\sigma_1|=k+3-j_0$, therefore $j_0=2k+4-|\sigma_0|-|\sigma_1|$.
Suppose we already defined $(\alpha_1,\beta_1),\ldots, (\alpha_i,\beta_i)$. We consider the following cases:
\begin{itemize}
\item If $(j_{2i-1},j_{2i})=(1,1)$, then we define $\alpha_{i+1}=\alpha_i+1$ and $\beta_{i+1}=\beta_i-1$. Then by \Cref{lemma_minimalnon} we have $|\sigma_{2i-1}|+|\sigma_{2i}|=2k+3=|\sigma_{2i}|+|\sigma_{2i+1}|$, hence $j_{2i-1}=2k+4-|\sigma_{2i-1}|-|\sigma_{2i}|=1$ and $j_{2i}=2k+4-|\sigma_{2i}|-|\sigma_{2i+1}|=1$.

\item If $(j_{2i-1},j_{2i})=(1,2)$, then by the assumption on $J$ we have $j_{2i-2}=1$.
Suppose that $\alpha_i+\beta_i=k+1$. Then by Lemma~\ref{lemma_minimalnon} we have $\alpha_{i-1}+\beta_{i-q}=k+1$, so $(j_{2i-3},j_{2i-2})=(1,1)$ and again by the assumption on $J$ we have $j_{2i-4}=1$. In $i$ steps we get $\alpha_p+\beta_p=k+1$ for all $p=1,\ldots,2i-1$. But then $j_1=j_{2i}=2$ and $j_q=1$ for all $q=2,\ldots,2i-1$ which is not possible.
So $\alpha_i+\beta_i=k$ and we define $\alpha_{i+1}=\alpha_i+1$ and $\beta_{i+1}=\beta_i-1$ and by \Cref{lemma_minimalnon} we have $j_{2i-1}=2k+4-|\sigma_{2i-1}|-|\sigma_{2i}|=1$ and $j_{2i}=2k+4-|\sigma_{2i}|-|\sigma_{2i+1}|=1$.

\item Let $(j_{2i-1},j_{2i})=(2,1)$. Suppose that $\alpha_i+\beta_i=k$. Then $j_{2i-2}=1$, so but the assumption on $J$, also $j_{2i-3}=1$. By Lemma~\ref{lemma_minimalnon} we have $\alpha_{i-1}+\beta_{i-q}=k$. In the same way, we see that $\alpha_p+\beta_p=k$ for all $p=1,\ldots,i$, so $j_q=0,\ldots,2i-2$. So between $2=j_{2r}$ and the next $2=j_{2i-1}$ there are oddly many 1's, which is not possible. So $\alpha_i+\beta_i=k+1$. We define $\alpha_{i+1}=\alpha_i+2$, $\beta_{i+1}=\beta_i-1$, and we have $j_{2i-1}=2k+4-|\sigma_{2i-1}|-|\sigma_{2i}|=2$ and $j_{2i}=2k+4-|\sigma_{2i}|-|\sigma_{2i+1}|=1$.

\item Let $(j_{2i-1},j_{2i})=(2,2)$. As in the previous case, we see that $\alpha_i+\beta_i=k+1$. We define $\alpha_{i+1}=\alpha_i+2$, $\beta_{i+1}=\beta_i-2$, and we have $j_{2i-1}=2k+4-|\sigma_{2i-1}|-|\sigma_{2i}|=2$ and $j_{2i}=2k+4-|\sigma_{2i}|-|\sigma_{2i+1}|=2$.
\end{itemize}
We constructed $M$ such that $j_{i}=2k+4-|\sigma_{i}|-|\sigma_{i+1}|$ for all $i=2r,0,\ldots,2r-2$ which also implies the last equality $j_{2r-1}=2k+4-|\sigma_{2r-1}|-|\sigma_r|=k+3-|\sigma_{2r-1}|$.
\end{proof}

\begin{lemma}\label{lem:Mcisomorphism}
Let $k\in\NN$, let $c$ be $(k+2,k)$ or $(k+1,k+1)$ and let $M_2^k\subseteq M\subseteq M_2^{k+1}$. There exists $J\in\mathcal J_k$ such that $\Bier_c(M)$ is isomorphic to $\Delta(|J|,J)$.
\end{lemma}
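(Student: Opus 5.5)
The plan is to compute $\min(\Bier_c(M))$ explicitly, arrange its elements in a cyclic sequence $\sigma_1,\ldots,\sigma_{2r-1}$ (or $\sigma_0,\ldots,\sigma_{2r}$), and apply part (2) of the first Corollary above. That corollary says that if consecutive minimal non-faces are disjoint and $|V|-|\sigma_i|-|\sigma_{i+1}|=j_i$, then the complex is $\Delta(2r+1,J)$.

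\textbf{Minimal non-faces.} For $c=(k+2,k)$ or $(k+1,k+1)$ there are no ghost vertices, by Proposition~\ref{CharacterisationGhostVerticesProp}, since $x_1,x_2\in M$ and $(x_i)^c\notin M$. Hence $|V(\Bier_c(M))|=|c|+2=2k+4$. Write $M=\langle x_1^{\alpha_1}x_2^{\beta_1},\ldots,x_1^{\alpha_r}x_2^{\beta_r}\rangle$ with the ordering fixed before Lemma~\ref{lemma_minimalnon}. By Theorem~\ref{FaceIdealMuraiThm} and Proposition~\ref{MinimalNonMonmomialsProp}, $G(I_{SR})$ has three kinds of elements:
\begin{itemize}
\item the $\pol$ of $\min(M)$;
\item the $\pol^\ast$ of $\min(M^\vee)=\{(x^a)^c: x^a\in\max M\}$, which are exactly the $r$ sets $\sigma_{2i-1}=\{\gen{x}{1}{\alpha_i+1},\ldots,\gen{x}{1}{c_1},\gen{x}{2}{\beta_i+1},\ldots,\gen{x}{2}{c_2}\}$;
\item the two sets $\tilde X_1,\tilde X_2$.
\end{itemize}
The third kind are never minimal, since $c_i\ge k$ and $x_i^{k+1}\notin M$ or its dual supplies a smaller non-face.

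Next, $\min(M)$ consists of the $r-1$ interior elements $x_1^{\gamma_i}x_2^{\delta_i}$ given by Lemma~\ref{lemma_minimalnon}, together with two extremal ones, $x_1^{\alpha_r+1}$ and $x_2^{\beta_1+1}$. The latter exist and are $c$-monomials because $\alpha_r,\beta_1\le k+1$ and $c$ is one of the two listed vectors. Whenever $\alpha_r+1>c_1$ or $\beta_1+1>c_2$, I will check separately that the corresponding polarization is still covered by $\tilde X_1$ or $\tilde X_2$. This gives $2r+1$ minimal non-faces, which I label cyclically as $\sigma_0=\pol(x_2^{\beta_1+1})$, $\sigma_1,\sigma_2=\pol(x_1^{\gamma_1}x_2^{\delta_1}),\ldots,\sigma_{2r-1}$, $\sigma_{2r}=\pol(x_1^{\alpha_r+1})$.

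\textbf{Disjointness and sizes.} The disjointness $\sigma_{2i}\cap\sigma_{2i\pm1}=\emptyset$ is already observed after Lemma~\ref{lemma_minimalnon}, because $\gamma_i\le\alpha_{i+1}$ and $\delta_i\le\beta_i$ by the formula $(\gamma_i,\delta_i)=(\alpha_i+1,\beta_{i+1}+1)$. For the ends, $\sigma_0=\{\gen{x}{2}{0},\ldots,\gen{x}{2}{\beta_1}\}$ misses $\sigma_1$, and $\sigma_{2r}$ misses $\sigma_{2r-1}$; also $\sigma_{2r}\cap\sigma_0=\emptyset$ because they involve different variables. The sizes are $|\sigma_{2i-1}|=|c|-\alpha_i-\beta_i\in\{k+1,k+2\}$ and $|\sigma_{2i}|=\gamma_i+\delta_i\in\{k+1,k+2\}$. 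Thus each $j_i=2k+4-|\sigma_i|-|\sigma_{i+1}|$ lies in $\{0,1,2\}$.

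To rule out $j_i=0$, I use Lemma~\ref{lemma_minimalnon}. It shows that $|\sigma_{2i}|=k+2$ forces $\alpha_i+\beta_i=\alpha_{i+1}+\beta_{i+1}=k+1$, so the neighbouring odd sets have size $k+1$. At the ends, $|\sigma_0|=\beta_1+1$ is $k+1$ or $k+2$. If it equals $k+2$, then $\beta_1=k+1$; since $x_2^{k+1}\notin M$ when $c_2=k$ or by the choice of $M$, I need to check that $\alpha_1+\beta_1$ gives $|\sigma_1|=k+1$, which is again a small case check. Hence $J\in\{1,2\}^{2r+1}$ with $|J|=\sum j_i=2(2k+4)-2\sum|\sigma_i|\cdot\tfrac12$. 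This sum telescopes to $(2r+1)(2k+4)-2\sum|\sigma_i|$, and it must equal $2k+4$ by dimension. I will rather deduce this from the corollary: $\Bier_c(M)\cong\Delta(2r+1,J)$ forces $|J|=|V|=2k+4$.

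\textbf{Membership in $\mathcal J_k$.} Since $\Bier_c(M)$ is neighborly by Theorem~\ref{NeighborlyExplicitClassificationThm}(e), Theorem~\ref{NeigborlyJ} gives that $J$ satisfies the Evenness condition. $J$ contains a $2$, since otherwise $|J|=2r+1$ would be odd while $|J|=2k+4$. Therefore $J\in\mathcal J_k$. The case $r=1$, where $\Delta(3,J)$ is degenerate, needs separate handling or exclusion.

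\textbf{Main obstacle.} The delicate step is the bookkeeping at the two ends of the cycle. Specifically, I must confirm that $\pol(x_1^{\alpha_r+1})$ and $\pol(x_2^{\beta_1+1})$ really are minimal generators and not superseded by $\tilde X_i$ when $\alpha_r=c_1$ or $\beta_1=c_2$. In that situation the relevant minimal non-face is $\tilde X_i$ itself, of size $c_i+1$. I would treat $c=(k+2,k)$ and $(k+1,k+1)$ separately at exactly this point.
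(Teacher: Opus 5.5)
Your overall plan matches the paper's: read off $\min(\Bier_c(M))$ from Theorem~\ref{FaceIdealMuraiThm} and Lemma~\ref{lemma_minimalnon}, arrange it cyclically with consecutive members disjoint, and apply part (2) of the Corollary. You obtain the Evenness condition differently. You use neighborliness (Theorem~\ref{NeighborlyExplicitClassificationThm}(e)) together with the argument in the proof of Theorem~\ref{NeigborlyJ}, or its statement plus the uniqueness Corollary. The paper instead does a direct case analysis with Lemma~\ref{lemma_minimalnon}. Your route is a legitimate shortcut. The gap is at the ends of the cycle, and your proposed fix there fails in one case.

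Two of your claims are false as stated. First, the sets $\tilde{X}_i$ can be minimal non-faces. Second, $x_1^{\alpha_r+1}$ and $x_2^{\beta_1+1}$ are not always $c$-monomials. For $c=(k+2,k)$ one always has $\beta_1=k=c_2$, and $\tilde{X}_2=\{\gen{x}{2}{0},\ldots,\gen{x}{2}{k}\}$ is a minimal non-face; it is your $\sigma_0$. Your last paragraph repairs this when at most one end is extremal.

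It does not work when both ends are extremal. This happens for $c=(k+1,k+1)$ with $x_1^{k+1},x_2^{k+1}\in M$, i.e.\ $\alpha_r=\beta_1=k+1$; for example $k=2$, $M=\langle x_2^3,x_1x_2,x_1^3\rangle$. Then $\min(M)$ consists only of the $r-1$ interior corners. Neither $\tilde{X}_1$ nor $\tilde{X}_2$ is minimal, since $\tilde{X}_1\supsetneq\sigma_1=\{\gen{x}{1}{1},\ldots,\gen{x}{1}{k+1}\}$ and $\tilde{X}_2\supsetneq\sigma_{2r-1}=\{\gen{x}{2}{1},\ldots,\gen{x}{2}{k+1}\}$. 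So $\Bier_c(M)$ has only $2r-1$ minimal non-faces, not $2r+1$. There is no $\sigma_0$ or $\sigma_{2r}$, and your assertion that ``the relevant minimal non-face is $\tilde{X}_i$ itself'' fails.

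The missing step is to close the cycle directly. We have $\sigma_{2r-1}\cap\sigma_1=\emptyset$ and $|\sigma_1|=|\sigma_{2r-1}|=k+1$, so the closing entry is $2k+4-2(k+1)=2$ and $J=(j_1,\ldots,j_{2r-1})$. In the example this gives $J=(2,1,1,2,2)$. This is exactly the separate case at the end of the paper's proof.

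Finally, your concern about $\Delta(3,J)$ does not arise for $k\ge 2$. There one always has $r\ge 2$. In the doubly extremal case $r\ge 3$, since a third generator is needed to cover $x_1x_2^{k-1}$. So the cycle length is always at least $5$.
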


\begin{proof}
Let us first assume that $\alpha_r\le k$ or $\beta_1\le k$ (this always applies in the case of $c=(k+2,k)$). Because of the simmetry in the case $c=(k+1,k+1)$, we may assume that $\beta_1\le k$. The set $\sigma_{2r}=\{\gen x10,\ldots,\gen x1{k+1}\}$ is a minimal non-simplex in $\Bier_c(M)$ since either $\beta_1=k<c_2$, so $x_2^{k+1}$ is a minimal non-element of $M$, either the second component of $c$ equals $c_2=k=\beta_1$ and the polariazation of $x_1^{k+1}$ is a generator of the Stanley--Reisner ideal. In the same way we can see that the set $\sigma_{0}=\{\gen x20,\ldots,\gen x2k\}$ is a minimal non-simplex in $\Bier_c(M)$. Together with Lemma~\ref{lemma_minimalnon} we see that the set of all minimal non-simplices of $\Bier_c(M)$ is $\{\sigma_i\mid i=0,\ldots,2r\}$, where $\sigma_{2i-1}=\{\gen x1{\alpha_i+1},\ldots,\gen x1{c_1},\gen x2{\beta_i+1},\ldots,\gen x2{c_2}\}$ for $i=1,\ldots,r$, and $\sigma_{2i}=\{\gen x10,\ldots,\gen x1{\gamma_i-1},\gen x20,\ldots,\gen x2{\delta_i-1}\}$ for $i=1,\ldots,r-1$. Since $\beta_1=\alpha_r=k$ we have $\sigma_0\cap\sigma_1=\emptyset=\sigma_{2r-1}\cap\sigma_{2r}$. Since $\alpha_i<\gamma_i\le \alpha_{i+1}$ and $\beta_i\ge \delta_i>\beta_{i+1}$ we have $\sigma_i\cap\sigma_{i+1}=\emptyset$ for all $i=0,\ldots,2r$, so $\Bier_c(M)$ is isomorphic to $\Delta(2r+1,J)$, where $j_i=2k+4-|\sigma_{i}|-|\sigma_{i+1}|$. Let us prove that $J=(j_0,\ldots,j_{2r})\in\mathcal J_k$. 

We have $j_{2r}=2k+4-|\sigma_{2r}|-|\sigma_{0}|=2$. If $j_0=2$ between $j_{2r}=2$ and the next $2=j_0$, there are no therefore evenly many 1's. Suppose that $j_0=1$, so $1=j_0=2k+4-|\sigma_0|-|\sigma_1|$. Therefore $\alpha_1+\beta_1=2k+2-|\sigma_1|=|\sigma_0|-1=k$. If $\alpha_i+\beta_i=k$ for all $i$, then by Lemma~\ref{lemma_minimalnon} we have $j_i=1$ for all $i=0,\ldots,2r-1$, so there is evenly many 1's between $2=j_{2r}$ and the next one which in this case is again $2=j_{2r}$. Suppose there exists $i>1$ such that $\alpha_i+\beta_i=k+1$, then $j_p=1$ for all $p=0,\ldots,2i-1$, and $j_{2i}=2$, so there are evenly many 1's between $2=j_{2r}$ and the next $2=j_{2i}$.

Suppose that $j_{2i}=2$ for some $i=1,\ldots,r-1$. By Lemma~\ref{lemma_minimalnon} we have $\alpha_i+\beta_i=\gamma_{i+1}+\delta_{i+1}=k+1$. If $\gamma_p+\gamma_p=k+1$ for all $p=i+1,\ldots, r-1$, then by Lemma~\ref{lemma_minimalnon} we have $j_q=1$ for all $q=2i+1,\ldots,2r-2$, and $j_{2r-1}=2k+4-|\sigma_{2r-1}|-|\sigma_{2r}|=2k+4-(k+1)-(k+1)=2$, so between $2=j_{2i}$ and the next $2=j_{2r-1}$ there are 0 so evenly many 1's. If $\gamma_s+\delta_s=k$ for some $s>i+1$ then by Lemma~\ref{lemma_minimalnon} we have $j_q=1$ for all $q=2i+1,\ldots,2s$ and $j_{2s+1}=2$.

Suppose that $j_{2i-1}=2$ for some $i=1,\ldots,r$. Then by Lemma~\ref{lemma_minimalnon} we have $\alpha_{i-1}+\beta_{i-1}=\gamma_{i-1}+\delta_{i-1}=k+1$. Suppose that $\alpha_p+\beta_p=k$ for all $p=i,\ldots,r$. Then by Lemma~\ref{lemma_minimalnon} we have $j_q=1$ for all $q=2i,\ldots,2r-1$, so between $j_{2i-1}=2$ and the next $2=j_{2r}$ there are evenly many 1's. 
Suppose that there exists $p\ge i$ such that $\alpha_p+\beta_p=k+1$, then by Lemma~\ref{lemma_minimalnon} we have $j_q=1$ for all $q=2i,\ldots,2q-1$ and $j_{2q}=2$. So between $j_{2i-1}=2$ and the next $2=j_{2q}$ there are evenly many 1's. 

We showed that $\Bier_c(M)$  is isomorphic to $\Delta(|J|,J)$ and $J\in\mathcal J_k$.

We need to consider the case when $c=(k+1,k+1)$, $a_r=k+1$, and $b_1=k+1$. Then $x_1^{k+1}$ and $x_2^{k+1}$ are generators of $M$, hence $\sigma_1=\{\gen x11,\ldots,\gen x 1 {k+1}\}$ and $\sigma_{2r-1}=\{\gen x21,\ldots,\gen x 2 {k+1}\}$, so $I_{SR}(\Bier_c(M))=\pol(I_c(M))+\pol^\ast(I_c(M^\vee))$
and the set of minimal simplices of $\Bier_c(M)$ is $\{\sigma_1,\ldots,\sigma_{2r-1}\}$. 
We have $j_{2r-1}=2k+4-|\sigma_{2r-1}|-|\sigma_1|=2$. As in the previous case we can show that $J=(j_1,\ldots,j_{2r-1})\in\mathcal J_k$. 
\end{proof}

\begin{remark}
From the above proof we see that for $c$ equal $(k+2,k)$ or $(k+1,k+1)$ and $M$ be a $c$-multicomplex such that $M_2^k\subseteq M\subseteq M_2^{k+1}$ there exists a $c$-multicompelx $M'=\langle x_1^{a_1}x_2^{b_1},\ldots, x_1^{a_r}x_2^{b_r}\rangle$, where $0\le a_1<a_2<\ldots<a_r\le k$ and $k\ge b_1>b_2>\ldots >b_r\ge 0$, such that $M_2^k\subseteq M\subseteq M_2^{k+1}$ and $\Bier_c(M)$ is isomorphic to $\Bier_c(M')$.
\end{remark}

\begin{corollary}
Let $k\in\NN$, let $c$ be $(k+1,k,1)$, $(k,k,2)$, or $(k,k,1,1)$, and let $M_2^k\subseteq M\subseteq M_2^{k+1}$. For every $c$-multicomplex $M$ such that $M_2^k\subseteq M\subseteq M_2^{k+1}$ we have $\Bier_c(M)\cong\Bier_{(k+2,k)}(M)$ and $\mathcal M^c=\mathcal M^{(k+2,k)}$.
\end{corollary}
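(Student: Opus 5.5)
We compare the Stanley--Reisner ideals via Theorem~\ref{FaceIdealMuraiThm}. For each of the three values of $c$, a $c$-multicomplex $M$ with $M_2^k\subseteq M\subseteq M_2^{k+1}$ involves only $x_1,x_2$; the extra coordinates contribute only through $I_c(M)$ and $I_c(M^\vee)$. We identify $\min(\Bier_c(M))$ explicitly and match it with $\min(\Bier_{(k+2,k)}(M))$ under a vertex bijection. Note also that $M$ is a valid $(k+2,k)$-multicomplex: by the discussion before Lemma~\ref{lemma_minimalnon}, its generators satisfy $\alpha_i\le k+1$ and $\beta_i\le k$ after using the symmetry $x_1\leftrightarrow x_2$ where needed, exactly as in the Remark after Lemma~\ref{lem:Mcisomorphism}.

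\emph{Case $c=(k+1,k,1)$.} Here $x_3\notin M$, so $\gen x30$ is a ghost vertex by Proposition~\ref{CharacterisationGhostVerticesProp}(a). By Proposition~\ref{MinimalNonMonmomialsProp}, the generators of $I_c(M)$ are $x_3$ together with the minimal non-elements of $M$ in $x_1,x_2$. The generators of $I_c(M^\vee)$ are $(x^a)^c$ for $x^a\in\max(M)$, namely $x_1^{k+1-\alpha_i}x_2^{k-\beta_i}x_3$.

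After the $\ast$-polarization, each of these generators contains $x_{3,1}$ in place of the top-index block. This is the role played by $x_{1,k+2}$ for $c=(k+2,k)$, where $(x^a)^c=x_1^{k+2-\alpha_i}x_2^{k-\beta_i}$ polarizes to a product beginning with $x_{1,k+2}$. We therefore use the bijection $\gen x3{1}\mapsto\gen x1{k+2}$, fixing all other vertices and dropping the ghost $\gen x30$. This is the same isomorphism as in the odd case (f).

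For the $\pol(I_c(M))$ part, the minimal non-elements $x_1^{\gamma}x_2^{\delta}$ polarize to $x_{1,0}\cdots x_{1,\gamma-1}x_{2,0}\cdots$, and these coincide in both settings. The exception is $x_1^{k+2}$ for $c=(k+2,k)$, which is not a $(k+1,k,1)$-monomial. For $c=(k+2,k)$ the last generator of $\pol(x_{1,c_1+1},\dots)$ gives $\{\gen x10,\dots,\gen x1{k+2}\}$. We must check that this is a non-face in both spheres under the bijection, i.e.\ that $\{\gen x10,\dots,\gen x1{k+1},\gen x31\}$ is a non-face of $\Bier_{(k+1,k,1)}(M)$. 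This follows from $\pol^\ast$ of $(x_1^{k+1})^c$ when $x_1^{k+1}\in M$, and otherwise from $\pol(x_1^{k+1})$ combined with minimality. Sorting this out, so that the minimal generating sets correspond exactly rather than merely generating the same ideal, is the main bookkeeping obstacle. I would handle it by splitting on whether $\alpha_r=k+1$.

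\emph{Case $c=(k,k,2)$.} Now both $\gen x30$ (as $x_3\notin M$) and nothing at the top index are ghosts. The generators $(x^a)^c=x_1^{k-\alpha_i}x_2^{k-\beta_i}x_3^2$ $\ast$-polarize with the block $x_{3,2}x_{3,1}$. I would map $\gen x3{1},\gen x3{2}$ to $\gen x1{k+1},\gen x1{k+2}$, which identifies this block with the top two $x_1$-vertices for $c=(k+2,k)$, and then verify the same correspondence of minimal non-faces. Here the ordering $\alpha_i\le k$ matters, so I would first apply the preceding Remark to reduce to that normalized $M$.

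\emph{Case $c=(k,k,1,1)$.} This is analogous: the two ghosts are $\gen x30,\gen x40$, and we send $\gen x31,\gen x41$ to $\gen x1{k+1},\gen x1{k+2}$. The $\ast$-polarized block $x_{3,1}x_{4,1}$ matches $x_{1,k+2}x_{1,k+1}$, and $\{\gen x31,\gen x41\}$ lies in no extra minimal non-face, just as the top $x_1$-block there.

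Having $\Bier_c(M)\cong\Bier_{(k+2,k)}(M)$ for every admissible $M$ gives $\mathcal M^c\subseteq\mathcal M^{(k+2,k)}$. The reverse inclusion follows because every class in $\mathcal M^{(k+2,k)}$ is realized by a normalized $M$ (by the Remark), and each such $M$ is also a $c$-multicomplex in the required range.
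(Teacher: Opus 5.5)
Your overall plan is the paper's. You use the same vertex bijections: $\gen{x}{3}{1}\mapsto\gen{x}{1}{k+2}$ for $c=(k+1,k,1)$, and the non-ghost vertices of the extra blocks sent to $\gen{x}{1}{k+1},\gen{x}{1}{k+2}$ for $(k,k,2)$ and $(k,k,1,1)$. These give $\mathcal M^c\subseteq\mathcal M^{(k+2,k)}$. Your reverse inclusion, routed through the Remark, is the paper's chain $\mathcal M^{(k+2,k)}\subseteq\mathcal M^J_k\subseteq\mathcal M^c$ repackaged, and that part is fine.

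However, you never actually carry out the verification of $\Bier_c(M)\cong\Bier_{(k+2,k)}(M)$, and the one step you do justify is wrong. For $c=(k+1,k,1)$ one has $(x_1^{k+1})^c=x_2^kx_3$, whose $\ast$-polarization $x_{2,k}\cdots x_{2,1}x_{3,1}$ is supported on $\{\gen{x}{2}{1},\dots,\gen{x}{2}{k},\gen{x}{3}{1}\}$. So it cannot show that $\{\gen{x}{1}{0},\dots,\gen{x}{1}{k+1},\gen{x}{3}{1}\}$ is a non-face. The correct reason is that $\tilde X_1=\{\gen{x}{1}{0},\dots,\gen{x}{1}{k+1}\}$ is itself a non-face, coming from the third summand of Theorem~\ref{FaceIdealMuraiThm} (the polarization of $x_1^{c_1+1}$). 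On the $(k+2,k)$ side the same set is a non-face because $x_1^{k+2}\notin M$. Hence $x_{1,0}\cdots x_{1,k+1}$ is either $\pol(x_1^{k+2})$ or divisible by $\pol(x_1^{\gamma})$ for a minimal non-element $x_1^\gamma$ with $\gamma\le k+1$.

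With this, the two Stanley--Reisner ideals coincide after renaming and discarding ghost variables, with no case split on $\alpha_r$. Equality of ideals is all you need; there is no further requirement that minimal generating sets match. For $c=(k,k,2)$ and $(k,k,1,1)$ the analogous point is that $x_1^{k+1}$ is a minimal non-element on the $(k+2,k)$ side, since $x_1^k\in M_2^k\subseteq M$. Its polarization matches the third summand $x_{1,0}\cdots x_{1,k}$ on the $c$ side.

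You should also drop the normalizations you invoke. Since $c_1\le k+1$ and $c_2=k$, the given $M$ is automatically a $(k+2,k)$-multicomplex with $\alpha_i\le c_1$. So neither the swap $x_1\leftrightarrow x_2$ (not even a symmetry of $c=(k+1,k,1)$) nor passing to a normalized $M$ via the Remark is needed. Using them would in any case prove the isomorphism for a different multicomplex than the given one.

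Finally, the check is shortest at the level of facets, which is what the paper's ``easy to see'' amounts to. Every $x^a\in M$ has $a_i=0$ for $i\ge 3$, and $x^a\diamond x_i^j\notin M$ for all $i\ge 3$, $j\ge 1$. So the facets of $\Bier_c(M)$ obtained by deleting a non-ghost vertex of an extra block correspond exactly to the facets of $\Bier_{(k+2,k)}(M)$ obtained by deleting $\gen{x}{1}{k+1}$ (when $c_1=k$) or $\gen{x}{1}{k+2}$. Those deletions are always admissible, because $x_1^{k+1}x_2^{a_2}$ (when $c_1=k$) and $x_1^{k+2}x_2^{a_2}$ never lie in $M$. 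All other facets are literally the same.
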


\begin{proof}
Let $c=(k,k,1,1)$ and  let $M$ be a $c$-multicomplex such that $M_2^k\subseteq M\subseteq M_2^{k+1}$. Then $M$ can be viewed also as $c'$-multicomplex, where $c'=(k+2,k)$. It is easy to see that $\Bier_c(M)$ and $\Bier_{c'}(M)$ are isomorphic; the isomorphism is $\gen{x}{i}{j}\mapsto \gen{x}{i}{j}$ for $i=1,2$, $\gen{x}{3}{1}\mapsto \gen{x}{1}{k+1}$, and $\gen{x}{4}{1}\mapsto \gen{x}{1}{k+2}$. 
So $\mathcal M^c\subseteq \mathcal M^{(k+2,k)}$. By \Cref{lem:Mcisomorphism} we have $\mathcal M_k^J\subseteq \mathcal M^c$ and by \Cref{lem:IsubMc}, we have $\mathcal M^{(k+2,k)}=\mathcal M_k^J$. In all the remaining cases, we can argue in the same way, which finishes the proof.
\end{proof}

We end up this section with a general result about combinatorial types of simplicial spheres with few vertices.

\begin{theorem}
Let $d\geq 2$. Then any neighborly simplicial $(d-1)$-sphere with no more than $d+3$ vertices is isomorphic to a Murai sphere.     
\end{theorem}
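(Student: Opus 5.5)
We argue by cases on the number of vertices $n\le d+3$ of a neighborly simplicial $(d-1)$-sphere $K$. Since any simplicial $(d-1)$-sphere has at least $d+1$ vertices, only three cases occur: $n=d+1$, $n=d+2$ and $n=d+3$. In each case we first identify the combinatorial type of $K$ and then exhibit it as a Murai sphere.

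\emph{The case $n=d+1$.} Here $K=\partial\Delta^{d}$. By Example~\ref{MuraiWithMequalsOneExample} with $m=1$, $c=d+1$ and $a=0$, we get $\Bier_{d+1}(\langle 1\rangle)=\partial\Delta^0\ast\partial\Delta^{d}\cong\partial\Delta^d$, since $\partial\Delta^0$ is the empty complex. Alternatively one can take the classical Bier sphere $\Bier(\partial\Delta_{[d+1]})$.

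\emph{The case $n=d+2$.} We use the classical fact that a simplicial $(d-1)$-sphere with $d+2$ vertices is a join $\partial\Delta^{n_1}\ast\partial\Delta^{n_2}$ with $n_1+n_2=d$; this holds because the Stanley--Reisner ideal is then generated by two disjoint monomials. By Proposition~\ref{NeighborlyProductsOfSimplicesProp}, neighborliness forces $n_1-1\le n_2\le n_1$. By Example~\ref{MuraiWithMequalsOneExample}, every such join equals $\Bier_{d+2}(\langle x^{n_1}\rangle)$, so $K$ is a Murai sphere. Proposition~\ref{NeighborlyProductsOfSimplicesProp} already asserts that neighborly joins of boundaries of simplices are Murai.

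\emph{The case $n=d+3$.} By Theorem~\ref{NeigborlyJ}, $K$ is either the octahedron boundary $\partial\Delta^1\ast\partial\Delta^1\ast\partial\Delta^1$ or $\Delta(p,J)$ with $J$ satisfying the Evenness condition and $|J|=d+3$.
\begin{itemize}
\item The octahedron is the Bier sphere $\Bier(K)$ for $K$ three disjoint points on $[3]$. It also equals the Murai sphere of Theorem~\ref{NeighborlyExplicitClassificationThm}(b) with $k=1$, namely $c=(1,1,1)$, $M=\langle x_1x_2\rangle$. Either way, Proposition~\ref{NeighborlyProductsOfSimplicesProp} covers it.
\item If $J$ contains no entry $2$, then $J=(1,\ldots,1)$ and $K=\Delta(2p+3,2p)$ with $d=2p$ even. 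Writing $d=2k$, the sphere $\Bier_{(k+1,k)}(M_2^k)$ from case (f) is neighborly of dimension $2k-1$ with $2k+3$ vertices. By Theorem~\ref{NeigborlyJ} it is a $\Delta(p',J')$ with $J'$ having odd length $2k+3$ and $|J'|=2k+3$, so $J'=(1,\ldots,1)$. Hence it is isomorphic to $\Delta(d+3,d)$. One must check that it is not the octahedron, which fails only for small $k$ and is handled directly.
\item If $J$ contains a $2$, we claim $|J|$ is even. Cyclically, $J$ is a union of blocks, each consisting of one $2$ followed by an even number of $1$'s. Each block has even sum and the number of components is odd, so the number of $2$'s is odd, and therefore $|J|$ has the parity of the number of $2$'s plus even, giving an odd count of $2$'s contributing even total.

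The parity bookkeeping needs care. Each block contributes $2+(\text{even})$, which is even, so $|J|=d+3$ is even and $d=2k+1$ is odd. Then $J\in\mathcal J_k$, and Theorem~\ref{MuraiClassesCoincideThm} gives $K\cong\Bier_{(k+2,k)}(M)$ for some $M$ with $M_2^k\subseteq M\subseteq M_2^{k+1}$.
\end{itemize}

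The main obstacle is matching the indexing conventions of $\mathcal J_k$ (sum $2k+4$, at least one $2$) with $d$ and $p$. We also have to handle the degenerate small cases ($d=2,3$, $k=1$) separately, using the low-dimensional classification Theorem~\ref{LowDimClassificationThm}: the pentagon and the relevant $2$- and $3$-spheres are Bier spheres.
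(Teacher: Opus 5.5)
Your overall route is the paper's: split by $f_0(K)\in\{d+1,d+2,d+3\}$, realize $\partial\Delta^d$ and joins of two simplex boundaries via Example~\ref{MuraiWithMequalsOneExample}, and for $d+3$ vertices invoke Theorem~\ref{NeigborlyJ}. The all-ones case goes to $\Bier_{(k+1,k)}(M_2^k)$, the case containing a $2$ goes to Theorem~\ref{MuraiClassesCoincideThm}, and $d=2,3$ is deferred to Theorem~\ref{LowDimClassificationThm}.

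Two small differences from the paper are worth keeping:
\begin{itemize}
\item Your block decomposition (each $2$ followed by an even number of $1$'s) proves $|J|$ is even whenever $J$ contains a $2$. The paper only uses this implicitly, via the cited fact that odd-vertex neighborly spheres are cyclic.
\item You identify $\Delta(2k+3,2k)$ with $\Bier_{(k+1,k)}(M_2^k)$ indirectly: the latter is neighborly by case (f), has $2k+3$ vertices, so Theorem~\ref{NeigborlyJ} plus parity forces it to be cyclic. This is valid and replaces the explicit Gale-evenness isomorphism the paper gives in Section~6. You should state why there are no ghost vertices: $x_1,x_2\in M_2^k$, while $(x_1)^c=x_1^kx_2^k$ and $(x_2)^c=x_1^{k+1}x_2^{k-1}$ have degree $2k>k$. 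Your worry about the octahedron there is vacuous, since $2k-1$ is odd.
\end{itemize}

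However, two explicit realizations are wrong.

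\textbf{The join case.} Since $\dim\Bier_c(M)=|c|-2$, the join $\partial\Delta^{n_1}\ast\partial\Delta^{n_2}$ with $n_1+n_2=d$ is $\Bier_{d+1}(\langle x^{n_1}\rangle)$. Your $\Bier_{d+2}(\langle x^{n_1}\rangle)$ is $\partial\Delta^{n_1}\ast\partial\Delta^{n_2+1}$, a $d$-sphere.

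\textbf{The octahedron.} Neither of your realizations is the octahedron. $\Bier(K)$ for three points on $[3]$ is $1$-dimensional; since $K^\vee=K$, it is a hexagon. $\Bier_{(1,1,1)}(\langle x_1x_2\rangle)$ is case (b) with $k=1$, i.e.\ the square $\partial\Delta^1\ast\partial\Delta^1$, with $x_{3,0},x_{3,1}$ ghosts. You need $|c|=4$, for instance $c=(1,1,1,1)$ and $M=\langle x_1x_2x_3\rangle$ as in the paper. Then $x_{4,0},x_{4,1}$ are ghost vertices and the minimal non-faces are the three pairs $\{x_{j,0},x_{j,1}\}$, $j=1,2,3$.

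Your argument survives because you fall back on Proposition~\ref{NeighborlyProductsOfSimplicesProp}, and the octahedron only arises for $d=3$, which the low-dimensional classification covers. Still, the stated realizations should be corrected. Finally, the cases $d=2,3$ concern $1$- and $2$-spheres, not $2$- and $3$-spheres.
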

\begin{proof}
Let $d\geq 4$. Due to~\cite{Mani}, any such sphere $K$ is polytopal. Then $f_0(K)$ can take the following 3 values:
\begin{itemize}
\item $f_0(K)=d+1$. Then $K=\partial\Delta^{d}$, and therefore $K=\Bier_c(\langle 1\rangle)$, where $c=(1,1,\ldots,1)\in\N^{m}, m=d+1$.

\item $f_0(K)=d+2$. Then $K=\partial\Delta^{d_1}\ast\partial\Delta^{d_2}$, and therefore, $K=\Bier_c(\langle x^{d_1}\rangle)$, where $c\in\N^m, m=1$, and $c=d_1+d_2+1$ due to Example~\ref{MuraiWithMequalsOneExample}.

\item $f_0(K)=d+3$. Then, by Theorem~\ref{NeigborlyJ}, either $K=\partial\Delta^1\ast\partial\Delta^1\ast\partial\Delta^1$, or $K=\Delta(p,J)$, for some $p\in\N$ and $\{1,2\}$-vector $J\in\N^{2p+3}$ satisfying the Evenness condition. In the former case, $K=\Bier_c(\langle x_1x_2x_3 \rangle)$, where $c\in\N^m, m=4$. In the latter case, we have the following two subcases.

a) If $\dim K = 2k-1$, then $K=\Delta(2k+3,2k)$ and therefore $K=\Bier_{(k+1,k)}(M_2^k)$.

b) If $\dim K = 2k$, then $K$ is isomorphic to a Murai sphere by Theorem~\ref{MuraiClassesCoincideThm}. \end{itemize}

The cases $d=2, 3$ are covered by the classification of low-dimensional Murai spheres given by Theorem~\ref{LowDimClassificationThm}.
\end{proof}

\section{Neighborly Murai spheres: geometric classification}

In \Cref{NeighborlyExplicitClassificationThm} we provided an algebraic classification of all neigborly Murai spheres; that is, we identified the corresponding vectors $c\in\N^m$ and proper $c$-multicomplexes $M$. Here, we are going to complete the geometric classification of all neighborly Murai spheres; that is, we will describe their combinatorial types in terms of a set of simple polytopes, whose face lattice structure is already known, and the simplicial multiwedge operation. 

The case (a) gives all boundaries of simplices. The case (b) gives all joins of boundaries of two simplices of the same dimension, and the case (c) gives all joins of boundaries of two simplices whose dimensions differ by one. These two cases give the same class of neighbourly Murai spheres as the case (g). 

The case (d) gives neighborly spheres of dimension 4 with 9 vertices. The classification of neighborly simplicial 5-polytopes with 9 vertices is done in~\cite{Fin}, and the following example shows that all Murai spheres that come from the case (d) are polytopal.

\begin{example}\label{NeighborlyMuraiTwoTwoTwoExample}
For $c=(2,2,2)$ there are 16 nonisomorphic $c$-multicomplexes $M$ up to duality and symmetry such that $M_2^2\subseteq M\subseteq M_2^3$. In order to classify the combinatorial types of the corresponding Murai spheres, we make use of the classification of simplicial neighborly 5-polytopes with 9 vertices obtained in~\cite{Fin} as well as the notation introduced there.

\begin{enumerate}
\item $M=\langle x^2z,xy^2,yz^2\rangle$: An isomorphism between $\Bier_c(M)$ and $\partial{\mathcal P}_{42}^0$ is\\
$x_{1,0}\mapsto 7,x_{1,1}\mapsto 8,x_{1,2}\mapsto 1,x_{2,0}\mapsto 9,x_{2,1}\mapsto 3,x_{2,2}\mapsto 2,x_{3,0}\mapsto 5,x_{3,1}\mapsto 6,x_{3,2}\mapsto 4$. 
\item $M=\langle x^2z,xy^2,yz,z^2\rangle$: An isomorphism between $\Bier_c(M)$ and $\partial{\mathcal P}_{13}^0$ is\\
$x_{1,0}\mapsto 2,x_{1,1}\mapsto 4,x_{1,2}\mapsto 6,x_{2,0}\mapsto 7,x_{2,1}\mapsto 3,x_{2,2}\mapsto 1,x_{3,0}\mapsto 5,x_{3,1}\mapsto 9,x_{3,2}\mapsto 8$. 
\item $M=\langle x^2,xyz,y^2,z^2\rangle$: An isomorphism between $\Bier_c(M)$ and $\partial{\mathcal P}_{75}^0$ is\\
$x_{1,0}\mapsto 2,x_{1,1}\mapsto 4,x_{1,2}\mapsto 6,x_{2,0}\mapsto 7,x_{2,1}\mapsto 5,x_{2,2}\mapsto 3,x_{3,0}\mapsto 9,x_{3,1}\mapsto 1,x_{3,2}\mapsto 8$.
\item $M=\langle x^2z,xyz,y^2,z^2\rangle$: An isomorphism between $\Bier_c(M)$ and $\partial{\mathcal P}_{28}^0$ is\\
$x_{1,0}\mapsto 9,x_{1,1}\mapsto 1,x_{1,2}\mapsto 8,x_{2,0}\mapsto 2,x_{2,1}\mapsto 4,x_{2,2}\mapsto 6,x_{3,0}\mapsto 7,x_{3,1}\mapsto 5,x_{3,2}\mapsto 3$.
\item $M=\langle x^2z,xy,y^2z,z^2\rangle$: An isomorphism between $\Bier_c(M)$ and $\partial{\mathcal P}_{30}^0$ is\\
$x_{1,0}\mapsto 5,x_{1,1}\mapsto 3,x_{1,2}\mapsto 1,x_{2,0}\mapsto 7,x_{2,1}\mapsto 2,x_{2,2}\mapsto 4,x_{3,0}\mapsto 9,x_{3,1}\mapsto 8,x_{3,2}\mapsto 6$.
\item $M=\langle x^2z,xyz,y^2z,z^2\rangle$: An isomorphism between $\Bier_c(M)$ and $\partial{\mathcal P}_{45}^0$ is\\
$x_{1,0}\mapsto 7,x_{1,1}\mapsto 5,x_{1,2}\mapsto 6,x_{2,0}\mapsto 9,x_{2,1}\mapsto 3,x_{2,2}\mapsto 4,x_{3,0}\mapsto 1,x_{3,1}\mapsto 8,x_{3,2}\mapsto 2$.
\item $M=\langle x^2y,xyz,y^2z,z^2\rangle$: An isomorphism between $\Bier_c(M)$ and $\partial{\mathcal P}_{20}^0$ is\\
$x_{1,0}\mapsto 7,x_{1,1}\mapsto 5,x_{1,2}\mapsto 3,x_{2,0}\mapsto 6,x_{2,1}\mapsto 1,x_{2,2}\mapsto 8,x_{3,0}\mapsto 2,x_{3,1}\mapsto 4,x_{3,2}\mapsto 9$.
\item $M=\langle x^2z,xy^2,y^2z,z^2\rangle$: An isomorphism between $\Bier_c(M)$ and $\partial{\mathcal P}_{32}^0$ is\\
$x_{1,0}\mapsto 7,x_{1,1}\mapsto 2,x_{1,2}\mapsto 4,x_{2,0}\mapsto 5,x_{2,1}\mapsto 1,x_{2,2}\mapsto 8,x_{3,0}\mapsto 3,x_{3,1}\mapsto 9,x_{3,2}\mapsto 6$.
\item $M=\langle x^2y,xy^2,xyz,z^2\rangle$: An isomorphism between $\Bier_c(M)$ and $\partial{\mathcal P}_{18}^0$ is\\
$x_{1,0}\mapsto 2,x_{1,1}\mapsto 4,x_{1,2}\mapsto 6,x_{2,0}\mapsto 3,x_{2,1}\mapsto 1,x_{2,2}\mapsto 8,x_{3,0}\mapsto 7,x_{3,1}\mapsto 5,x_{3,2}\mapsto 9$.
\item $M=\langle x^2z,xy^2,xz^2,yz\rangle$: An isomorphism between $\Bier_c(M)$ and $\partial{\mathcal P}_{37}^0$ is\\
$x_{1,0}\mapsto 3,x_{1,1}\mapsto 1,x_{1,2}\mapsto 6,x_{2,0}\mapsto 5,x_{2,1}\mapsto 9,x_{2,2}\mapsto 8,x_{3,0}\mapsto 7,x_{3,1}\mapsto 2,x_{3,2}\mapsto 4$.
\item $M=\langle x^2y,xz,y^2,yz,z^2\rangle$: An isomorphism between $\Bier_c(M)$ and $\partial{\mathcal P}_{67}^0$ is\\
$x_{1,0}\mapsto 4,x_{1,1}\mapsto 6,x_{1,2}\mapsto 1,x_{2,0}\mapsto 9,x_{2,1}\mapsto 8,x_{2,2}\mapsto 3,x_{3,0}\mapsto 2,x_{3,1}\mapsto 7,x_{3,2}\mapsto 5$.
\item $M=\langle x^2y,x^2z,y^2,yz,z^2\rangle$: An isomorphism between $\Bier_c(M)$ and $\partial{\mathcal P}_{29}^0$ is\\
$x_{1,0}\mapsto 9,x_{1,1}\mapsto 1,x_{1,2}\mapsto 8,x_{2,0}\mapsto 2,x_{2,1}\mapsto 4,x_{2,2}\mapsto 6,x_{3,0}\mapsto 7,x_{3,1}\mapsto 5,x_{3,2}\mapsto 3$.
\item $M=\langle x^2y,xy^2,xz,yz,z^2\rangle$: An isomorphism between $\Bier_c(M)$ and $\partial{\mathcal P}_{79}^0$ is\\
$x_{1,0}\mapsto 4,x_{1,1}\mapsto 6,x_{1,2}\mapsto 2,x_{2,0}\mapsto 7,x_{2,1}\mapsto 5,x_{2,2}\mapsto 1,x_{3,0}\mapsto 9,x_{3,1}\mapsto 8,x_{3,2}\mapsto 3$.
\item $M=\langle x^2y,x^2z,xy^2,yz,z^2\rangle$: An isomorphism between $\Bier_c(M)$ and $\partial{\mathcal P}_{73}^0$ is\\
$x_{1,0}\mapsto 8,x_{1,1}\mapsto 1,x_{1,2}\mapsto 6,x_{2,0}\mapsto 3,x_{2,1}\mapsto 5,x_{2,2}\mapsto 7,x_{3,0}\mapsto 9,x_{3,1}\mapsto 4,x_{3,2}\mapsto 2$.
\item $M=\langle x^2y,x^2z,xyz,y^2,z^2\rangle$: An isomorphism between $\Bier_c(M)$ and $\partial{\mathcal P}_{52}^0$ is\\
$x_{1,0}\mapsto 9,x_{1,1}\mapsto 1,x_{1,2}\mapsto 8,x_{2,0}\mapsto 2,x_{2,1}\mapsto 4,x_{2,2}\mapsto 6,x_{3,0}\mapsto 7,x_{3,1}\mapsto 5,x_{3,2}\mapsto 3$.
\item $M=\langle x^2,xy,xz,y^2,yz,z^2\rangle$: An isomorphism between $\Bier_c(M)$ and $\partial{\mathcal P}_{82}^0$ is\\
$x_{1, 0} \mapsto 8, x_{1, 1}\mapsto 3, x_{1, 2}\mapsto 1, x_{2, 0}\mapsto 4, x_{2, 1} \mapsto 6, x_{2, 2}\mapsto 2, x_{3, 0}\mapsto 9, x_{3, 1}\mapsto 7, x_{3, 2}\mapsto 5$.
\end{enumerate}
Note that no one of these spheres is isomorphic to $\Delta(9,5)$: the number of universal edges in the former ones is less than or equal to 18, while that in the latter one equals 21. 
\end{example}

The case (e) was dealt with in the previous section, where we showed that those neighbouring Murai spheres are precisely spheres that are the boundary of a simplicial multiwedge of a dual cyclic polythope $C^*(2p+3,2p)$ with a vector $J$ satisfying the Evenness condition. The remaining case (f) of \Cref{NeighborlyExplicitClassificationThm} yields the boundary of a cyclic polytope $C(2p+3,2p)$, as shown explicitly in the next example.

\begin{example}
Let $c=(k+1,k)$ and $M=\langle x_1^a x_2^b\mid a+b=k\rangle$. The Murai sphere $\Bier_c(M)$ has dimension $2k-1$ and the set of vertices $V(\Bier_c(M))=\{x_{1,0},\ldots,x_{1,k+1},x_{2,0},\ldots,x_{2,k}\}$. A subset $\sigma\subset V(\Bier_c(M))$ is a facet if and only if $\sigma=V(\Bier_c(M))\setminus\{x_{1,i},x_{2,j_1},x_{2,j_2}\}$ where $i+j_1\le k$ and $i+j_2>k$ or $\sigma=V(\Bier_c(M))\setminus\{x_{1,i_1},x_{1,i_2},x_{2,j}\}$, where $i_1+j\le k$ and $i_2+j>k$. Consider the simplicial sphere $\Delta=\Delta(2k+3,2k)$ and the corresponding set $T=\{t_1,\ldots,t_{2k+3}\}$. The map $f\colon V(\Bier_c(M))\to T$, defined by $f(x_{1,j})=t_{2j+1}$ and $f(x_{2,j})=t_{2(k-j)+2}$ is a bijection. For a facet $\sigma=V(\Bier_c(M))\setminus\{x_{1,i},x_{2,j_1},x_{2,j_2}\}$, $j_1<j_2$, we have $f(\sigma)=T\setminus\{t_{2i+1},t_{2(k-j_1)+2},t_{2(k-j_2)+2}\}$. The element $t_{2(k-j_2)+2}$ is to the left of $t_{2(k-j_1)+2}$. Since $i+j_1\le k$ the element $t_{2i+1}$ is to the left of $t_{2(k-j_1)+2}$ and since $i+j_2 > k$, the element $t_{2i+1}$ is to the right of $t_{2(k-j_2)+2}$. Hence $t_{2i+1}$ is in between the elements $t_{2(k-j_2)+2}$ and $t_{2(k-j_1)+2}$, so there are evenly many elements in $T$ between $t_{2(k-j_2)+2}$ and $t_{2i+1}$. Similarly, there are evenly many between $t_{2i+1}$ and $t_{2(k-j_1)+2}$, so the set $f(\sigma)$ is a facet in $\Delta$. The argument in the case of the facet $\sigma=V(\Bier_c(M))\setminus\{x_{1,i_1},x_{1,i_2},x_{2,j}\}$, where $i_1+j\le k$ and $i_2+j>k$ is similar. Hence $f$ induces an isomorphism between $\Bier_c(M)$ and $\Delta$.

Let $c=(k,k,1)$ and $M=\langle x_1^a x_2^b\mid a+b=k\rangle$. The Murai sphere $\Bier_c(M)$ is also isomorphic to $\Delta(2k+3,2k)$, and the isomorphism is $f\colon V(\Bier_c(M))\to T$, $f(x_{1,i})=t_{2i+1}$, $f(x_{2,i})=t_{2(k-i)+2}$, $f(x_{3,1})=t_{2k+3}$.
\end{example}

Combining all the previous results, we obtain the following classification of all neighborly Murai spheres. Since all spheres in dimensions $1$ and $2$ are neighborly, it remains to consider the case of an $(n-1)$-dimensional Murai sphere with $n\geq 4$.

\begin{theorem}\label{ClassificationNeighMuraiPolytopalThm}
Let $n\geq 4$. An $(n-1)$-dimensional Murai sphere is neighborly if and only if it is isomorphic to one of the following polytopal spheres:
\begin{itemize}
\item $\Delta(n+1,n), n\geq 4$, the boundary of a simplex;
\item $\Delta(n+2,n),n\geq 4$, a join of two boundaries of simplices whose dimensions differ by not more than one;
\item the boundary of a neighborly simplicial 5-polytope with 9 vertices described in Example~\ref{NeighborlyMuraiTwoTwoTwoExample};
\item $\Delta(n+3,n)(J),n\geq 4$, where $n$ is even and $J$ satisfies the \emph{Evenness condition}.
\end{itemize}
In particular, each neighborly Murai sphere is polytopal.
\end{theorem}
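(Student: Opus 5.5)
The proof is an assembly of the algebraic classification in Theorem~\ref{NeighborlyExplicitClassificationThm} with the geometric identifications obtained afterwards. An $(n-1)$-dimensional Murai sphere $\Bier_c(M)$ has $|c|=n+1$, and its number of geometric vertices is $|c|+m$ minus the number of ghost vertices. I plan to go through cases (a)--(g) of Theorem~\ref{NeighborlyExplicitClassificationThm}, with $n\ge 4$ (so $k\ge 2$), and identify the combinatorial type in each case. The converse direction follows by exhibiting each listed sphere as a Murai sphere and invoking neighborliness.

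\emph{Forward direction.} In case (a), Proposition~\ref{CharacterisationGhostVerticesProp} shows that $\gen{x}{2}{0},\gen{x}{2}{1}$ are ghost vertices. Computing $\min(\Bier_c(M))$ as in the proof of Theorem~\ref{NeighborlySpheresThm}(a), the only remaining minimal non-face is $\tilde X_1$, so the sphere is $\partial\Delta^{c_1}=\Delta(n+1,n)$.

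Cases (b), (c) and (g) give $\partial\Delta^{k}\ast\partial\Delta^{k}$ and $\partial\Delta^{k+1}\ast\partial\Delta^{k}$; this follows from the computation in the proof of Theorem~\ref{NeighborlyExplicitClassificationThm} and the Remark after it. These are exactly the spheres $\Delta(n+2,n)$ that are neighborly by Proposition~\ref{NeighborlyProductsOfSimplicesProp}. Note that $\Delta(n+2,n)$ is a join of two simplex boundaries with dimensions differing by at most one.

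Case (d) has $|c|=6$, so $n=5$ with $9$ vertices. Example~\ref{NeighborlyMuraiTwoTwoTwoExample} identifies each of the $16$ resulting spheres with the boundary of a polytope $\mathcal P^0_i$ from Finbow's list. Here I would only need to check that the enumeration of $M$ with $M_3^2\subseteq M\subseteq M_3^3$ up to symmetry and duality is complete, which is a finite check.

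Cases (e) and (f) have $|c|+m$ minus ghost vertices equal to $n+3$. In case (f), $n=2k$ is even, and the Example preceding the theorem gives $\Bier_c(M_2^k)\cong\Delta(2k+3,2k)=\Delta(n+3,n)(1,\dots,1)$. In case (e), $n=2k+1$, and Theorem~\ref{MuraiClassesCoincideThm} identifies these spheres exactly with the $\Delta(p,J)$, $J\in\mathcal J_k$, satisfying the Evenness condition.

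Here lies the main obstacle. The statement's last item reads ``$n$ even'', while case (e) produces odd $n$ with $J\ne(1,\dots,1)$. I would interpret the item as $\Delta(p,J)$ with $|J|=n+3$ and $J$ satisfying the Evenness condition, where the underlying cyclic polytope $C(2p+3,2p)$ is even-dimensional. This covers case (f) with $J=(1,\ldots,1)$ and case (e) with $J$ containing a $2$, so I will state this reading explicitly.

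\emph{Converse and polytopality.} The boundary of a simplex is $\Bier_{(n+1)}(\langle 1\rangle)$. Neighborly joins are Murai spheres by Example~\ref{MuraiWithMequalsOneExample} and Proposition~\ref{NeighborlyProductsOfSimplicesProp}. The $9$-vertex examples are Murai by construction. Each evenness-condition $\Delta(p,J)$ is Murai by Lemma~\ref{lem:IsubMc}, or by the case (f) Example when $J=(1,\dots,1)$. All of these spheres are neighborly: the joins by Proposition~\ref{NeighborlyProductsOfSimplicesProp}, the multiwedges by Theorem~\ref{NeigborlyJ}, and the case (d) spheres by Theorem~\ref{NeighborlyExplicitClassificationThm}. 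Polytopality holds because simplex boundaries and joins of simplex boundaries are polytopal, simplicial multiwedges preserve polytopality (Remark after the $J$-construction), every $(d-1)$-sphere with $d+3$ vertices is polytopal by~\cite{Mani}, and the $\mathcal P^0_i$ are polytopes by~\cite{Fin}.
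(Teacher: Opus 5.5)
Your proposal is correct and follows the paper's own route. The paper's proof is exactly this assembly: the algebraic classification of Theorem~\ref{NeighborlyExplicitClassificationThm}, with cases (a)--(c) and (g) identified as simplex boundaries or joins of two simplex boundaries, case (d) via Example~\ref{NeighborlyMuraiTwoTwoTwoExample}, case (f) via the cyclic-polytope example, and case (e) via Theorem~\ref{MuraiClassesCoincideThm}.

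Your reading of the last item is the intended one: $\Delta(p,J)$ with $|J|=n+3$ over an even-dimensional base $C(2p+3,2p)$. This reading should also drop any restriction $2p\ge 4$ on the base, since for example $J=(2,2,2,1,1)$ arises in case (e) with $k=2$. The paper's own description of case (e) just before the theorem confirms this.
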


Finally, we discuss some applications of the previous result and related open problems. 

\begin{corollary}\label{ClassificationCyclicMuraiCoro}
A Murai sphere is isomorphic to $\Delta(a,b)$ if and only if one of  the following conditions holds:
\begin{itemize}
\item $b=2, 3$ and $1\leq a-b\leq 4$;
\item $b\geq 4$ and $1\leq a-b\leq 3$.  \end{itemize}  
\end{corollary}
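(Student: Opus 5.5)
The plan is to split according to the dimension $b-1$ of $\Delta(a,b)$. Recall that $\Delta(a,b)$ is a neighborly $(b-1)$-sphere with $a$ vertices, and that $a\geq b+1$. Upper bounds on $a-b$ come from counting vertices of Murai spheres or from Theorem~\ref{ClassificationNeighMuraiPolytopalThm}. Existence comes from the explicit constructions already in the paper.

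\emph{Case $b\geq 4$.} Here $\Delta(a,b)$ is a neighborly Murai sphere of dimension $n-1$ with $n=b\geq 4$, so Theorem~\ref{ClassificationNeighMuraiPolytopalThm} applies. In every item of that list except the third, the sphere has at most $n+3$ vertices. In the third item ($n=5$, $9$ vertices), Example~\ref{NeighborlyMuraiTwoTwoTwoExample} states that none of these spheres is isomorphic to $\Delta(9,5)$, since they have at most $18$ universal edges while $\Delta(9,5)$ has $21$. Hence $a-b\leq 3$.

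For the converse, $\Delta(b+1,b)$ and $\Delta(b+2,b)$ are neighborly spheres with at most $b+2$ vertices. The same holds for $\Delta(b+3,b)$, which has exactly $b+3$ vertices. Each is therefore a Murai sphere by the preceding theorem: every neighborly $(d-1)$-sphere with at most $d+3$ vertices is isomorphic to a Murai sphere. Explicitly, these are $\Bier_c(\langle 1\rangle)$, Example~\ref{MuraiWithMequalsOneExample}, and either $\Bier_{(k+1,k)}(M_2^k)$ or Theorem~\ref{MuraiClassesCoincideThm} with $J=(1,\ldots,1)$.

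\emph{Low dimensions $b=2,3$.} For the upper bound, a Murai sphere $\Bier_c(M)$ has dimension $|c|-2$ and at most $|c|+m\leq 2|c|$ vertices.
\begin{itemize}
\item For $b=2$ we have $|c|=3$, so there are at most $6$ vertices and $a\leq 6$.
\item For $b=3$ we have $|c|=4$, so there are at most $8$ vertices. Every vertex of an $n$-gon or of a simplicial $3$-polytope is a geometric vertex, so the question is about the geometric vertex count. To reach $a\leq 7=b+4$, I must exclude $8$ geometric vertices. That would force $c=(1,1,1,1)$ with no ghost vertices, that is, a Bier sphere $\Bier(K)$ on $8$ vertices. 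I would check from the classification of~\cite{LV2} (Theorem~\ref{LowDimClassificationThm}) that such a sphere is not $\Delta(8,3)$; alternatively, a degree count should show it cannot be.
\end{itemize}

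For existence in dimension one, the $a$-gons with $3\leq a\leq 6$ are all $1$-dimensional Bier spheres. For example, the hexagon is $\Bier(K)$ for $K$ the three isolated points of $[3]$, and the smaller polygons come from adding ghost vertices. For dimension two:
\begin{itemize}
\item $\Delta(4,3)=\partial\Delta^3$;
\item $\Delta(5,3)=\partial\Delta^2\ast\partial\Delta^1$, which is Murai by Example~\ref{MuraiWithMequalsOneExample};
\item $\Delta(6,3)$ is Murai by~\cite[Theorem 5.3]{LV2}, or equivalently by Theorem~\ref{MuraiClassesCoincideThm} with $k=1$.
\end{itemize}

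The main obstacle is $\Delta(7,3)$, which needs an explicit Murai realization, and ruling out $a=8$ when $b=3$. I would first try to realize $\Delta(7,3)$ as a $2$-dimensional Bier sphere on $[4]$ with exactly one ghost vertex. I would choose $K$ by matching the vertex-degree sequence of $\Delta(7,3)$, which is $(3,4,4,4,5,5,5)$ up to order, and then verify the isomorphism on facets using the Gale evenness condition, in the same way as the isomorphism with $\Delta(2k+3,2k)$ was verified in the example above. If that fails, I would look among the two exceptional spheres of Theorem~\ref{LowDimClassificationThm}.
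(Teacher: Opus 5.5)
Your argument for $b\ge 4$ and for $b=2$ is the paper's. For $b\ge4$ the upper bound comes from Theorem~\ref{ClassificationNeighMuraiPolytopalThm} together with the universal-edge count of Example~\ref{NeighborlyMuraiTwoTwoTwoExample}, which excludes $\Delta(9,5)$, and existence comes from the theorem on neighborly spheres with at most $d+3$ vertices. There is one small slip. For odd $b$ the relevant vector is $J=(2,1,\ldots,1)$, via $\Delta(b+3,b)\cong\Delta(b+2,b-1)(2,1,\ldots,1)$, not $J=(1,\ldots,1)$, which lies in no $\mathcal J_k$. Since you actually invoke the general theorem, nothing breaks. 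The real gap is $b=3$, which the paper settles by citing the low-dimensional results of \cite{LV2}. You leave both the existence for $a=7$ and the exclusion of $a=8$ unproved, and the route you announce for $a=7$ cannot succeed.

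First, the degree sequence of $\Delta(7,3)$ is not $(3,4,4,4,5,5,5)$. The facets of $C(7,3)$ are $\{1,i,i+1\}$ for $2\le i\le 6$ and $\{i,i+1,7\}$ for $1\le i\le 5$. So $1$ and $7$ are adjacent to every other vertex, and the sequence is $(6,6,4,4,4,3,3)$. The sequence you wrote is that of $\Bier(K)$ for $K=\{\emptyset,\{1\},\{2\},\{3\},\{1,2\}\}$ on $[4]$, which would mislead your search.

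In fact no $2$-dimensional Bier sphere with $7$ vertices works. Since $\Bier(K)\cong\Bier(K^\vee)$, you may assume the ghost vertex is $4$. Then $K$ is a graph $G$ on $\{1,2,3\}$, and every primed vertex is present. This gives $\deg i=\deg_G i+3\le 5$ and $\deg j'=\deg_{K^\vee}j+2\le 5$ for $j\le 3$, so only $4'$ can have degree $6$, whereas $\Delta(7,3)$ needs two such vertices.

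Hence, by Theorem~\ref{LowDimClassificationThm}, the realization must be one of the exceptional spheres, and indeed $\Delta(7,3)\cong\Bier_{(2,1,1)}(\langle x_1^2,x_2,x_3\rangle)$. The isomorphism is $1\mapsto x_{1,1}$, $2\mapsto x_{2,0}$, $3\mapsto x_{3,1}$, $4\mapsto x_{1,0}$, $5\mapsto x_{2,1}$, $6\mapsto x_{3,0}$, $7\mapsto x_{1,2}$. A direct check from the facet description $F_c(x^a)\setminus\{x_{i,j}\}$ shows that the ten facets of this Murai sphere are exactly the images of the ten facets above. The other exceptional sphere has degree sequence $(6,5,5,4,4,3,3)$, so it would not work.

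For $a=8$, the degree count you hint at does close the step, but you must carry it out. Eight geometric vertices force $c=(1,1,1,1)$ with no ghost vertices. Then every vertex of $\Bier(K)$ has degree $\deg_K i+3\le 6$ or $\deg_{K^\vee}j+3\le 6$, while $\Delta(8,3)$ has two vertices of degree $7$.
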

\begin{proof}
The implication $\Leftarrow$ is immediate from~\cite[Proposition 3.2, Theorem 3.6]{LV2} and Theorem~\ref{ClassificationNeighMuraiPolytopalThm}. The implication $\Rightarrow$ follows from the facts that $\Delta(a,b)$ is neighborly and $\Delta(m,2\ell+1)=\Delta(m-1,2\ell)(2,1,\ldots,1)$ due to~\cite[Proposition 2.3.33]{Er}, as well as Example~\ref{NeighborlyMuraiTwoTwoTwoExample} and Theorem~\ref{ClassificationNeighMuraiPolytopalThm}.\end{proof}

Due to~\cite[Theorem 3.2]{LTZ2}, each Bier sphere has a \emph{regular realization}, i.e. it is isomorphic to the underlying simplicial complex of a complete regular fan in the ambient Euclidean space. The situation changes when we consider the class of all Murai spheres. Namely, we have the following generalization of~\cite[Theorem 5.3]{LV2}.

\begin{corollary}
In each dimension $d\geq 5$, there exists a $d$-dimensional Murai sphere, which has no regular realization.    
\end{corollary}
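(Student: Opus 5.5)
The plan is to use the boundaries of cyclic polytopes $\Delta(d+4,d+1)$, $d\geq 5$, as the examples. By Corollary~\ref{ClassificationCyclicMuraiCoro} (case $b\geq 4$, $a-b=3$), $\Delta(d+4,d+1)$ is a Murai sphere for every $d\geq 5$. Concretely:
\begin{itemize}
\item for $d+1$ even it is $\Bier_{(k+1,k)}(M_2^k)$ with $2k=d+1$, by case (f) of Theorem~\ref{NeighborlyExplicitClassificationThm} and the explicit isomorphism with $\Delta(2k+3,2k)$ given above;
\item for $d+1$ odd it is $\Delta(d+3,d)(2,1,\ldots,1)$, which satisfies the Evenness condition and is therefore Murai by Theorem~\ref{MuraiClassesCoincideThm}.
\end{itemize}
It has dimension $d$. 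So it remains to show that $\Delta(d+4,d+1)$ has no regular realization for $d\geq 5$.

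For this I would argue by downward induction on dimension, using links. If $K$ is the underlying complex of a complete regular fan $\Sigma$ in $\R^n$ and $v$ is a vertex, then projecting the star of the ray of $v$ along that ray gives a complete regular fan in $\R^{n-1}$ whose underlying complex is $\mathrm{lk}_K v$. Unimodularity is preserved because the ray generator of $v$ extends to a lattice basis together with the generators of any cone containing it. By the Gale evenness condition, the link of the first vertex $\mu(t_1)$ in $C(m,n)$ is combinatorially $\Delta(m-1,n-1)$: the facets containing $t_1$, with $t_1$ removed, are exactly the sets satisfying the evenness condition on $t_2,\ldots,t_m$. Hence, if $\Delta(d+4,d+1)$ had a regular realization, so would $\Delta(d+3,d)$, and iterating, so would $\Delta(9,6)$.

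The base case is therefore that $\Delta(9,6)$, a $5$-dimensional sphere, has no regular realization. This is where I would invoke~\cite[Theorem 5.3]{LV2}, of which the present corollary is declared a generalization. I expect that result to give exactly a low-dimensional boundary of a cyclic polytope with $m=n+3$ and no regular realization.

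If the base case recorded there is instead in a different dimension, I would either start the link induction from that sphere, or prove the base directly. The direct argument would use integer characteristic matrices. A regular realization gives an integer $6\times 9$ matrix $\Lambda$ in which every facet's columns form a lattice basis. Neighborliness of $\Delta(9,6)$, where every triple of vertices is a face, forces very rigid unimodularity constraints on all $3$-subsets of columns. Reducing $\Lambda$ modulo $2$ and using the Gale-dual $3\times 9$ description, I would aim to derive a contradiction.

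The main obstacle is this base case, namely pinning down exactly which statement~\cite{LV2} provides and ensuring it matches a sphere reachable by links from every $\Delta(d+4,d+1)$ with $d\geq 5$. The link-descent step and the Murai identification are routine given the results above.
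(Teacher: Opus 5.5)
Your identification of $\Delta(d+4,d+1)$ as a Murai sphere is correct, and so is your link-descent step. The star of a ray in a complete unimodular fan, projected along that ray, is a complete unimodular fan on the link. By Gale evenness, the link of $t_1$ in $\Delta(m,n)$ is $\Delta(m-1,n-1)$. The gap is the base case, which is the only non-formal input. You do not prove that $\Delta(9,6)$ has no regular realization; you only expect \cite[Theorem 5.3]{LV2} to say so. The paper needs no descent at all: it invokes Hasui's result~\cite{Hasui} that $\Delta(n+3,n)$ has no regular realization for every $n\geq 6$, and applies it directly in each dimension $d=n-1\geq 5$. Also, your descent only carries non-regularity upward in dimension. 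Restarting it from a higher-dimensional base sphere would therefore leave $d=5$ (and possibly more) uncovered.

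Your direct fallback can be completed, but not through the constraint you emphasize. $3$-neighborliness only says that any three columns of $\bar\Lambda=\Lambda \bmod 2$ are independent, i.e.\ that $\ker\bar\Lambda$ is a binary $[9,3,\geq 4]$ code, and such codes exist. What works is the cofacet condition on the Gale side.

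Let $G$ be a $3\times 9$ matrix over $\mathbb{F}_2$ whose rows form a basis of $\ker\bar\Lambda$; this kernel is $3$-dimensional since $\bar\Lambda$ has rank $6$. For a facet $F$, the columns of $\bar\Lambda$ indexed by $F$ form a basis of $\mathbb{F}_2^6$ if and only if the columns $g_i$ of $G$ with $i\notin F$ form a basis of $\mathbb{F}_2^3$. By Gale evenness, the complements of facets of $C(9,6)$ are exactly the triples $a<b<c$ with $b-a$ and $c-b$ odd. Every pair $i<j$ in $[9]$ lies in such a triple:
\begin{itemize}
\item if $j-i$ is even, take $b=i+1$;
\item if $j-i$ is odd and $j\leq 8$, take $c=j+1$;
\item if $j-i$ is odd and $j=9$, take $a=i-1$, which is valid since $i$ is then even.
\end{itemize}
Hence $g_1,\ldots,g_9$ are nonzero and pairwise distinct, which is impossible because $\mathbb{F}_2^3$ has only seven nonzero vectors.

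With this addition your argument is a self-contained alternative to the paper's proof. It needs one small mod-$2$ computation for $\Delta(9,6)$ plus the link descent, instead of quoting Hasui's result in every dimension.
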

\begin{proof}
By the previous theorem, for each $n\geq 6$, there exists an integer vector $c\in\N^m, m\geq 1$ and a proper $c$-multicomplex $M$ such that $\Bier_c(M)\cong\Delta(n+3,n)$. On the other hand, the right-hand side is a sphere of dimension $d$ with $d=n-1\geq 5$, which does not have a regular realization due to~\cite{Hasui}.    
\end{proof}

By~\cite[Theorem 3.6]{LV2}, each Murai sphere in dimensions $1$ and $2$ has a regular realization. Thus, the next question arises naturally.

\begin{problem}
Prove that each $d$-dimensional Murai sphere with $d\leq 4$ has a regular realization, or find a counterexample.     
\end{problem}

Since any Bier sphere has a canonical starshaped realization~\cite{Zivaljevic19,Zivaljevic21} and each neighborly Murai sphere is polytopal by Theorem~\ref{ClassificationNeighMuraiPolytopalThm}, and therefore starshaped, this justifies asking the following question.

\begin{problem}
Prove that any Murai sphere is a i) PL-sphere, or even ii) a starshaped sphere, or find a counterexample. 
\end{problem}

Another related problem arises in toric topology, where to any simplicial complex $K$ one associates a cellular space with a compact torus action $\zk$ called its moment-angle-complex. The latter space acquires an equivariant smooth structure, provided that $K$ is a starshaped sphere; when $K$ is a general triangulated sphere, $\zk$ is merely a topological manifold, see~\cite{TT}. It was shown in~\cite{GLdM} that if $K$ is an odd-dimensional neighborly polytopal sphere different from the boundary of a simplex, then its moment-angle manifold $\zk$ is diffeomorphic to a connected sum of sphere products with two spheres in each product. All manifolds of the latter smooth type among the moment-angle manifolds over Bier spheres were identified in~\cite{LTZ2}. This brings up our final question.

\begin{problem}
Classify all the Murai spheres whose moment-angle manifolds are homotopy equivalent to connected sums of sphere products with two spheres in each product.    
\end{problem}

\subsection*{Acknowledgements}
The authors are grateful to Suyoung Choi, Taras Panov, and Rade \v{Z}ivaljevi\'c for numerous fruitful discussions, valuable comments and suggestions. Limonchenko was supported by the Serbian Ministry of Science, Technological Development and Innovation through the Mathematical Institute of the Serbian Academy of Sciences and Arts.
Vavpeti\v{c} was supported by the Slovenian Research and Innovation Agency program P1-0292.

%\printbibliography
\bibliographystyle{amsplain}
\bibliography{main2}

\end{document}